\numberwithin{equation}{section}
\theoremstyle{plain}
\newtheorem{theorem}{Theorem}[section]
\newtheorem{proposition}[theorem]{Proposition}
\newtheorem{corollary}[theorem]{Corollary}
\newtheorem{lemma}[theorem]{Lemma}
\theoremstyle{definition}
\newtheorem{definition}[theorem]{Definition}
\theoremstyle{remark}
\newtheorem*{remark}{Remark}
\begin{document}
%
%

\newcommand{\M}{\mathcal{M}_{g,N+1}^{(1)}}
\newcommand{\Teich}{\mathcal{T}_{g,N+1}^{(1)}}
\newcommand{\T}{\mathrm{T}}
\newcommand{\corr}{\bf}
\newcommand{\vac}{|0\rangle}
\newcommand{\Ga}{\Gamma}
\newcommand{\new}{\bf}
\newcommand{\define}{\def}
\newcommand{\redefine}{\def}
\newcommand{\Cal}[1]{\mathcal{#1}}
\renewcommand{\frak}[1]{\mathfrak{{#1}}}
\newcommand{\refE}[1]{(\ref{E:#1})}
\newcommand{\refS}[1]{Section~\ref{S:#1}}
\newcommand{\refSS}[1]{Section~\ref{SS:#1}}
\newcommand{\refT}[1]{Theorem~\ref{T:#1}}
\newcommand{\refO}[1]{Observation~\ref{O:#1}}
\newcommand{\refP}[1]{Proposition~\ref{P:#1}}
\newcommand{\refD}[1]{Definition~\ref{D:#1}}
\newcommand{\refC}[1]{Corollary~\ref{C:#1}}
\newcommand{\refL}[1]{Lemma~\ref{L:#1}}
\newcommand{\R}{\ensuremath{\mathbb{R}}}
\newcommand{\C}{\ensuremath{\mathbb{C}}}
\newcommand{\N}{\ensuremath{\mathbb{N}}}
\newcommand{\Q}{\ensuremath{\mathbb{Q}}}
\renewcommand{\P}{\ensuremath{\mathbb{P}}}
\newcommand{\Z}{\ensuremath{\mathbb{Z}}}
\newcommand{\kv}{{k^{\vee}}}
\renewcommand{\l}{\lambda}
\newcommand{\gb}{\overline{\mathfrak{g}}}
\newcommand{\g}{\mathfrak{g}}
\newcommand{\gh}{\widehat{\mathfrak{g}}}
\newcommand{\ghN}{\widehat{\mathfrak{g}_{(N)}}}
\newcommand{\gbN}{\overline{\mathfrak{g}_{(N)}}}
\newcommand{\tr}{\mathrm{tr}}
\newcommand{\sln}{\mathfrak{sl}}
\newcommand{\sn}{\mathfrak{s}}
\newcommand{\so}{\mathfrak{so}}
\newcommand{\spn}{\mathfrak{sp}}
\newcommand{\gl}{\mathfrak{gl}}
\newcommand{\slnb}{{\overline{\mathfrak{sl}}}}
\newcommand{\snb}{{\overline{\mathfrak{s}}}}
\newcommand{\sob}{{\overline{\mathfrak{so}}}}
\newcommand{\spnb}{{\overline{\mathfrak{sp}}}}
\newcommand{\glb}{{\overline{\mathfrak{gl}}}}
\newcommand{\Hwft}{\mathcal{H}_{F,\tau}}
\newcommand{\Hwftm}{\mathcal{H}_{F,\tau}^{(m)}}

\newcommand{\car}{{\mathfrak{h}}}    
\newcommand{\bor}{{\mathfrak{b}}}    
\newcommand{\nil}{{\mathfrak{n}}}    
\newcommand{\vp}{{\varphi}}
\newcommand{\bh}{\widehat{\mathfrak{b}}}  
\newcommand{\bb}{\overline{\mathfrak{b}}}  
\newcommand{\Vh}{\widehat{\mathcal V}}
\newcommand{\KZ}{Kniz\-hnik-Zamo\-lod\-chi\-kov}
\newcommand{\TUY}{Tsuchia, Ueno  and Yamada}
\newcommand{\KN} {Kri\-che\-ver-Novi\-kov}
\newcommand{\pN}{\ensuremath{(P_1,P_2,\ldots,P_N)}}
\newcommand{\xN}{\ensuremath{(\xi_1,\xi_2,\ldots,\xi_N)}}
\newcommand{\lN}{\ensuremath{(\lambda_1,\lambda_2,\ldots,\lambda_N)}}
\newcommand{\iN}{\ensuremath{1,\ldots, N}}
\newcommand{\iNf}{\ensuremath{1,\ldots, N,\infty}}

\newcommand{\tb}{\tilde \beta}
\newcommand{\tk}{\tilde \kappa}
\newcommand{\ka}{\kappa}
\renewcommand{\k}{\kappa}

\newcommand{\Pif} {P_{\infty}}
\newcommand{\Pinf} {P_{\infty}}
\newcommand{\PN}{\ensuremath{\{P_1,P_2,\ldots,P_N\}}}
\newcommand{\PNi}{\ensuremath{\{P_1,P_2,\ldots,P_N,P_\infty\}}}
\newcommand{\Fln}[1][n]{F_{#1}^\lambda}
\newcommand{\tang}{\mathrm{T}}
\newcommand{\Kl}[1][\lambda]{\can^{#1}}
\newcommand{\A}{\mathcal{A}}
\newcommand{\U}{\mathcal{U}}
\newcommand{\V}{\mathcal{V}}
\renewcommand{\O}{\mathcal{O}}
\newcommand{\Ae}{\widehat{\mathcal{A}}}
\newcommand{\Ah}{\widehat{\mathcal{A}}}
\newcommand{\La}{\mathcal{L}}
\newcommand{\Le}{\widehat{\mathcal{L}}}
\newcommand{\Lh}{\widehat{\mathcal{L}}}
\newcommand{\eh}{\widehat{e}}
\newcommand{\Da}{\mathcal{D}}
\newcommand{\kndual}[2]{\langle #1,#2\rangle}
\newcommand{\cins}{\frac 1{2\pi\mathrm{i}}\int_{C_S}}
\newcommand{\cinsl}{\frac 1{24\pi\mathrm{i}}\int_{C_S}}
\newcommand{\cinc}[1]{\frac 1{2\pi\mathrm{i}}\int_{#1}}
\newcommand{\cintl}[1]{\frac 1{24\pi\mathrm{i}}\int_{#1 }}
\newcommand{\w}{\omega}
\newcommand{\ord}{\operatorname{ord}}
\newcommand{\res}{\operatorname{res}}
\newcommand{\nord}[1]{:\mkern-5mu{#1}\mkern-5mu:}
\newcommand{\Fn}[1][\lambda]{\mathcal{F}^{#1}}
\newcommand{\Fl}[1][\lambda]{\mathcal{F}^{#1}}
\renewcommand{\Re}{\mathrm{Re}}

\newcommand{\ha}{H^\alpha}

\define\ldot{\hskip 1pt.\hskip 1pt}
\define\ifft{\qquad\text{if and only if}\qquad}
\define\a{\alpha}
\redefine\d{\delta}
\define\w{\omega}
\define\ep{\epsilon}
\redefine\b{\beta}
\redefine\t{\tau}
\redefine\i{{\,\mathrm{i}}\,}
\define\ga{\gamma}
\define\cint #1{\frac 1{2\pi\i}\int_{C_{#1}}}
\define\cintta{\frac 1{2\pi\i}\int_{C_{\tau}}}
\define\cintt{\frac 1{2\pi\i}\oint_{C}}
\define\cinttp{\frac 1{2\pi\i}\int_{C_{\tau'}}}
\define\cinto{\frac 1{2\pi\i}\int_{C_{0}}}
\define\cinttt{\frac 1{24\pi\i}\int_C}
\define\cintd{\frac 1{(2\pi \i)^2}\iint\limits_{C_{\tau}\,C_{\tau'}}}
\define\cintdr{\frac 1{(2\pi \i)^3}\int_{C_{\tau}}\int_{C_{\tau'}}
\int_{C_{\tau''}}}
\define\im{\operatorname{Im}}
\define\re{\operatorname{Re}}
\define\res{\operatorname{res}}
\redefine\deg{\operatornamewithlimits{deg}}
\define\ord{\operatorname{ord}}
\define\rank{\operatorname{rank}}
\define\fpz{\frac {d }{dz}}
\define\dzl{\,{dz}^\l}
\define\pfz#1{\frac {d#1}{dz}}

\define\K{\Cal K}
\define\U{\Cal U}
\redefine\O{\Cal O}
\define\He{\text{\rm H}^1}
\redefine\H{{\mathrm{H}}}
\define\Ho{\text{\rm H}^0}
\define\A{\Cal A}
\define\Do{\Cal D^{1}}
\define\Dh{\widehat{\mathcal{D}}^{1}}
\redefine\L{\Cal L}
\redefine\D{\Cal D^{1}}
\define\KN {Kri\-che\-ver-Novi\-kov}
\define\Pif {{P_{\infty}}}
\define\Uif {{U_{\infty}}}
\define\Uifs {{U_{\infty}^*}}
\define\KM {Kac-Moody}
\define\Fln{\Cal F^\lambda_n}
\define\gb{\overline{\mathfrak{ g}}}
\define\G{\overline{\mathfrak{ g}}}
\define\Gb{\overline{\mathfrak{ g}}}
\redefine\g{\mathfrak{ g}}
\define\Gh{\widehat{\mathfrak{ g}}}
\define\gh{\widehat{\mathfrak{ g}}}
\define\Ah{\widehat{\Cal A}}
\define\Lh{\widehat{\Cal L}}
\define\Ugh{\Cal U(\Gh)}
\define\Xh{\hat X}
\define\Tld{...}
\define\iN{i=1,\ldots,N}
\define\iNi{i=1,\ldots,N,\infty}
\define\pN{p=1,\ldots,N}
\define\pNi{p=1,\ldots,N,\infty}
\define\de{\delta}

\define\kndual#1#2{\langle #1,#2\rangle}
\define \nord #1{:\mkern-5mu{#1}\mkern-5mu:}
\define \sinf{{\widehat{\sigma}}_\infty}
\define\Wt{\widetilde{W}}
\define\St{\widetilde{S}}
\define\Wn{W^{(1)}}
\define\Wtn{\widetilde{W}^{(1)}}
\define\btn{\tilde b^{(1)}}
\define\bt{\tilde b}
\define\bn{b^{(1)}}
%
\define\eps{\varepsilon}    
\define\doint{({\frac 1{2\pi\i}})^2\oint\limits _{C_0}
       \oint\limits _{C_0}}                            
\define\noint{ {\frac 1{2\pi\i}} \oint}   
\define \fh{{\frak h}}     
\define \fg{{\frak g}}     
\define \GKN{{\Cal G}}   
\define \gaff{{\hat\frak g}}   
\define\V{\Cal V}
\define \ms{{\Cal M}_{g,N}} 
\define \mse{{\Cal M}_{g,N+1}} 
\define \tOmega{\Tilde\Omega}
\define \tw{\Tilde\omega}
\define \hw{\hat\omega}
\define \s{\sigma}
\define \car{{\frak h}}    
\define \bor{{\frak b}}    
\define \nil{{\frak n}}    
\define \vp{{\varphi}}
\define\bh{\widehat{\frak b}}  
\define\bb{\overline{\frak b}}  
\define\Vh{\widehat V}
\define\KZ{Knizhnik-Zamolodchikov}
\define\ai{{\alpha(i)}}
\define\ak{{\alpha(k)}}
\define\aj{{\alpha(j)}}
\newcommand{\laxgl}{\overline{\mathfrak{gl}}}
\newcommand{\laxsl}{\overline{\mathfrak{sl}}}
\newcommand{\laxs}{\overline{\mathfrak{s}}}
\newcommand{\laxg}{\overline{\frak g}}
\newcommand{\bgl}{\laxgl(n)}
\newcommand{\tX}{\widetilde{X}}
\newcommand{\tY}{\widetilde{Y}}
\newcommand{\tZ}{\widetilde{Z}}
\vspace*{-1cm}
%
%
\hspace*{\fill} math.QA/yymmnnn
\vspace*{2cm}

\title[Central extensions of Lax operator algebras]
     {Central extensions of Lax operator algebras}
\thanks{This work was supported by the grant
R1F10L05 of the University of Luxembourg, 
the RFBR project 05-01-00170, and by the Programme
"Mathematical methods of non-linear dynamics" of the Russian Academy of
Sciences
}
\author[M. Schlichenmaier]{Martin Schlichenmaier}
\address[Martin Schlichenmaier]{Institute of Mathematics,
University of  Luxembourg,
162 A, Avenue de la Faiencerie,
L-1511 Luxembourg, Grand Duchy  of Luxembourg}
\email{martin.schlichenmaier@uni.lu}
\author[O.K. Sheinman]{Oleg K. Sheinman}
\address[ Oleg K. Sheinman]{Steklov Mathematical Institute, ul. Gubkina, 8,
Moscow, 119991, Russia
and Independent University of Moscow,
Bolshoi Vlasievskii per. 11, Moscow, Russia}
\email{sheinman@mi.ras.ru}

\begin{abstract}
Lax operator algebras were introduced by 
Kri\-che\-ver and  Sheinman as a further development of I.Krichever's theory of
Lax operators on algebraic curves.
These  are almost-graded Lie algebras of 
current type. In this article local cocycles and associated
almost-graded central extensions are classified.
It is shown that in the case that the corresponding finite-dimensional
Lie algebra is simple the two-cohomology space is one-dimensional.
An important role is played by the action of the Lie algebra of
meromorphic vector fields on the Lax operator algebra via
suitable covariant derivatives.
\end{abstract}
\subjclass[2000]{17B65, 17B67, 17B80, 
14H55,  14H70, 30F30, 81R10, 81T40}
\keywords{
infinite-dimensional
Lie algebras, current algebras, Krichever Novikov type algebras,
central extensions, Lie algebra cohomology, integrable systems}
\date{November 22, 2007}
\maketitle
\tableofcontents
\section{Introduction}\label{S:intro}

In this article, we give a full classification of almost-graded
central extensions for a new class of one-dimensional current
algebras --- the Lax operator algebras.

Lax operator algebras are introduced by I.Krichever and one of the
authors in \cite{KSlax}. In that work, the concept of Lax
operators on algebraic curves proposed in \cite{Klax} was
generalized to $\g$-valued Lax operators where $\g$ is one of the
classical complex Lie algebras.

We would like to remind here that in \cite{Klax} the theory of
conventional Lax and zero curvature representations with a {\it
rational} spectral parameter was generalized to the case of
algebraic curves $\Sigma$ of arbitrary genus $g$. Such
representations arise in several ways in the theory of integrable
systems, c.f. \cite{rKNU} where a zero curvature representation of
the Krichever-Novikov equation is introduced, or \cite{Klax} where
a field analog of the Calogero-Moser system on an elliptic curve
is presented. The theory of Lax operators on Riemann surfaces
proposed in \cite{Klax} includes the Hamiltonian theory of Lax and
zero curvature equations, the theory of Baker-Akhieser functions,
and an approach to corresponding algebraic-geometric solutions.

The concept of Lax operators on algebraic curves is closely
related to A.Tyurin results on the classification of
holomorphic vector bundles on algebraic curves \cite{Tyvb}. It
uses {\it Tyurin data} modelled on {\it Tyurin parameters} of such
bundles consisting of points $\ga_s$ ($s=1,\ldots ,ng$), and
associated elements $\a_s\in\C P^n$ (where $g$ denotes the genus
of the Riemann surface $\Sigma$, and $n$ corresponds to the rank of
the  bundle).

The linear space of Lax operators associated with a positive
divisor $D=\sum_k m_kP_k$, $P_k\in \Sigma$ is defined in
\cite{Klax} as the space of meromorphic $(n\times n)$
matrix-valued functions on $\Sigma$ having poles of multiplicity
at most $m_k$ at the points $P_k$, and at most simple poles at
$\gamma_s$'s. The coefficients of the Laurent expansion of those
matrix-valued functions in the neighborhood of a point $\gamma_s$
have to obey certain linear constraints parameterized by $\alpha_s$
(see relations \refE{gldef} below).

The observation that Lax operators having poles of arbitrary
orders at the points $P_k$ form an algebra with respect to the
usual point-wise multiplication became a starting point of the
considerations in \cite{KSlax}. There, for $\g=\sln(n),\ \so(n) ,\
\spn(2n)$ over $\C$, the $\g$-valued Lax operators were
introduced. The space of such operators form a Lie algebra with
respect to the point-wise bracket. We denote this algebra
by~$\gb$. Considering $\g$-valued Lax operators requires certain
modifications of the above mentioned linear constraints. It even
turned out  that for $\g=\spn(2n)$ the orders of poles at
$\ga_s$'s must be set equal to $2$. There is no doubt that by
means of appropriate modifications it is possible to construct Lax
operator algebras for other classical Lie algebras.

On the other hand, in case of absence of points $\ga_s$ (which
corresponds to trivial vector bundles) we return to the known
class of Krichever-Novikov algebras (see \cite{ShN65} for a
review). If, in addition, the genus of $\Sigma$ is equal to $0$,
and $D$ is supported at two points, we obtain (up to isomorphism)
the loop algebras.

Likewise Krichever-Novikov algebras, the Lax operator algebras
possess an almost-graded structure generalizing the graded
structure of the classic affine algebras. Recall that a Lie
algebra $\V$ is called  {\it almost-graded} if $\V=\oplus_i\V_i$
where $\dim\,\V_i<\infty$ and
$[\V_i,\V_j]\subseteq\oplus_{k=i+j-k_0}^{k=i+j+k_1}\V_k$ where
$k_0$ and $k_1$ do not depend on $i$, $j$.

The general notion of almost-graded algebras and modules over them
was introduced in \cite{KNFa}-\cite{KNFc} where the
generalizations of Heisenberg  and Virasoro algebras were
considered.
The almost-graded structure is important in the theory of
highest-weight-like representations (physically --- in
second quantization).

By one-dimensional central extensions  quantum theory enters
Lie algebra theory. A prominent example is given by the
Heisenberg algebra. The
mathematical  relevance of  central extensions is  well-known.

The equivalence classes of one-dimensional central extensions of a
Lie algebra $\V$ are in one-to-one correspondence with the
elements of $\H^2(\V,\C)$, the second  Lie algebra cohomology with
coefficients in the trivial module. In particular, a central
extension is explicitly given by a 2-cocycle of $\V$. If $\dim
\H^2(\V,\C)=1$ then there is (up to rescaling of the central
element and equivalence) only one non-trivial central extension.
By abuse of language we say that the central extension is unique.

Lax operator algebras belong to the class of one-dimensional
current algebras since their elements are meromorphic $\g$-valued
functions on Riemann surfaces. The algebras of that class having
been classically considered are graded. The problem of classifying
their central extensions was considered in a series of articles.
Here we quote only three of them: V.Kac \cite{Kac68} and R.Moody
\cite{Moody69} constructed central extensions using canonical
generators and Cartan-Serre relations; H.Garland \cite{Gar} proved
the uniqueness theorem for loop algebras with simple $\g$. For
further references see \cite[Comments to Chapter 7]{KacB}.

For the more general case of a Lie algebra of the form $\g\otimes
\A$ with an associative algebra $\A$ and a simple Lie algebra $\g$,
Ch.Kassel \cite{Kass} showed that the universal central extension is
parameterized by K\"ahler differentials modulo exact differentials.
In particular, it is not necessarily one-dimensional. Hence in
general one-dimensional central extensions are not unique.

A special case is given by  the higher genus multi-point current
algebras \cite{Shea}, \cite{Sha}, \cite{SSS}, 
\cite{Saff}. They consist of $\g$-valued meromorphic
functions on the Riemann surface with poles only at a finite
number of fixed points. In higher genus and in the multi-point
case in genus zero the central extensions  are essentially
non-unique. In fact for a simple $\g$ they are in one-to-one
correspondence with the elements of $\H_1(\Sigma\setminus
supp(D),\R)$.

We like to point out, that Kassel's result is not applicable to
Lax operator algebras as they  do not admit any factorization as
tensor product.

Coming from the applications (e.g. from second quantization) an
important role is played by {\it almost-graded} central
extensions, i.e. central extensions in the category of
almost-graded Lie algebras
\cite{Shf}, \cite{ShMMJ}, \cite{SDiss}.
Almost-graded central extensions are
given by local 2-cocycles. A 2-cocycle $\ga$ of an almost-graded
Lie algebra $\V$ is called local if there exists a $K\in\Z$  such
that $\ga(\V_i,\V_j)=0$ for $|i+j|>K$. This notion of a local
cocycle is introduced in \cite{KNFa}. A cohomology class is called
local if it contains a local representing cocycle. For a
Krichever-Novikov algebra with a simple $\g$ the almost-gradedness
implies the uniqueness of a central extension \cite{Saff}. A
similar statement was previously conjectured for Virasoro-type
algebras in \cite{KNFa}, where also the outline of a proof was
given. A complete classification of almost-graded central
extensions for Krichever-Novikov current and vector field algebras
is given by one of the authors in \cite{Scocyc,Saff}.


In this article  we solve the corresponding problem for the
Lax operator algebras
$\gb$. Here we only consider the two-point case, 
i.e. $D=P_++P_-$.
The principal structure of the multi-point case is similar and
will be considered in \cite{SSlaxm}. Again, if $\g$ is
a classical simple Lie algebra it turns out that $\gb$  has a 
unique almost-graded central  extension.


Let us describe  the content and the obtained results 
of the present article in more
detail. Let  $\L$ be the Lie algebra consisting of those
meromorphic vector fields  on $\Sigma$ which are holomorphic
outside of $\{P_+,P_-\}$. In \refS{algebras} we introduce an
$\L$-action on the Lax operator algebra $\gb$. For that we make
use of the connections $\nabla^{(\w)}$  introduced in \cite{Kiso}.
These connections again have prescribed behavior at the points of
weak singularities and are holomorphic outside of those and of 
$\{P_+,P_-\}$. Indeed, we might even require (and do so) that they
are holomorphic at $P_+$. It turns out that the Lax operator
algebra is  an almost-graded module over the algebra
consisting of those meromorphic differential operators  of degree
$\le 1$ which are holomorphic outside  $\{P_+,P_-\}$.

The $\L$-module structure, given by a choice of a connection
$\nabla^{(\w)}$, enables us to introduce below an important notion
of $\L$-invariant cocycles.

In \refS{cocylces} we introduce the following cocycles
\begin{align}\label{E:ig1}
\ga_{1,\w,C}(L,L')&= \cinc{C} \tr(L\cdot \nabla^{(\w)}L'),
\\
\label{E:ig2} \ga_{2,\w,C}(L,L')&= \cinc{C} \tr(L)\cdot
\tr(\nabla^{(\w)}L'),
\end{align}
called {\it geometric cocycles}. Here $C$ is an arbitrary cycle on
$\Sigma$ avoiding the points of possible singularities.
In another form  the
cocycles of type \refE{ig1}  were introduced in
\cite{KSlax}. We show that the corresponding cohomology classes do
not depend on the choice of the connection.

A cocycle $\gamma$ is
called $\L$-invariant if
\begin{equation}
\gamma(\nabla^{(\w)}_{e}L,L')+ \gamma(L,\nabla^{(\w)}_{e}L')=0,
\end{equation}
for all vector fields $e\in\L$. It turns out that the cocycles
\refE{ig1} and \refE{ig2} are $\L$-invariant. We call a cohomology
class $\L$-invariant if it has a representative which is
$\L$-invariant. In the case of a simple Lie algebra $\g$ the
notion of $\L$-invariance allows us to single out  a unique
element in the cohomology class. Moreover, in the $\glb(n)$ case it
is necessary to exclude nontrivial cocycles coming from the
finite-dimensional Lie algebra.

Besides those aspects, the $\L$-invariance of a cocycle is related to the
property that it  comes from a cocycle of the differential
operator algebra associated to $\gb$. See \refS{remarks} for more
information.

Again, here we are only interested in almost-graded central
extensions, hence in local cocycles, resp. cohomology classes. For
a general cycle $C$ in \refE{ig1} and \refE{ig2} neither the
cocycle nor its cohomology class is local. But if $C$ is a circle
around $P_+$ the cocycle is local, see \refP{locg1}.

Our main result is  \refT{main} which gives the following
classification. For $\slnb(n)$, $\sob(n)$, and $\spnb(2n)$ the space
of local cohomology classes is 1-dimensional. Furthermore, in
every local cohomology class there is a unique $\L$-invariant
representative. It is  given as a multiple of the cocycle
\refE{ig1} (with $C$ a circle around $P_+$). For $\glb(n)$ we
obtain, that the space of cohomology classes which are local and
having been restricted to the scalar algebra are $\L$-invariant is
two-dimensional. Furthermore, every local  and $\L$-invariant
cocycle is a linear combination of \refE{ig1} and \refE{ig2} (with
$C$ a circle around $P_+$).

The proofs are presented in \refS{induction} and \refS{direct}. We
follow the general strategy developed in \cite{Scocyc} and adapt
it to our more general situation. 
In Section 4, 
using  the  locality and
$\L$-invariance, we  show that the cocycle is given by its values
at the pairs of homogenous elements for which the sum of their
degrees is equal to zero. Furthermore, we show that an
$\L$-invariant and local cocycle will be uniquely fixed by a
certain finite number of such cocycle values. A more detailed
analysis shows that the cocycles are of the form introduced above.

In \refS{direct} we show the following: Let $\g$ be a simple
finite-dimensional Lie algebra and $\gb$ any associated two-point
algebra of {\it current type}, e.g. a Lax operator algebra, a
Krichever-Novikov current algebra $\g\otimes \A$, or a Kac-Moody
current algebra $\g\otimes\C[z,z^{-1}]$, then every local cocycle
is cohomologous to a cocycle which is fixed by its value at 
one special pair of elements in $\gb$ (i.e. by
$\ga(H^\a_1,H^{\a}_{-1})$ for one fixed simple root $\a$, see
\refS{direct} for the notation). Hence in these cases the cohomology
spaces can be at most 1-dimensional. Combining this with the
fact of existence of the cocycle \refE{ig1} we obtain the
uniqueness and existence of the local cohomology class.
Furthermore, up to rescaling \refE{ig1} is the unique
$\L$-invariant and local cocycle.

We substantially
 use the internal structure of the  Lie algebra $\gb$ related
to the root system of the underlying finite dimensional
simple Lie algebra $\g$, and
the almost-gradedness of $\gb$. Recall that in the classical case
$\g\otimes\C[z,z^{-1}]$ the algebra is graded. In this very
special case the  chain of arguments gets simpler and is similar
to the arguments of  Garland \cite{Gar}.

The presented arguments remain valid in a more general context, as
one only refers to the internal structure of $\g$, the
almost-gradedness of $\gb$, and the $\L$-invariance, see the remark
at the end of \refS{direct}.

\medskip

By adapting the techniques in \cite{Saff}, the corresponding
uniqueness and classification results can be obtained for the case
of more than two points allowed for  ``strong'' singularities.
More precisely, let
$$
I:=\{P_1,P_2,\ldots,P_K\} \qquad O:=\{Q_1,Q_2,\ldots,Q_L\}
$$
be two non-empty disjoint subsets of points on $\Sigma$. This is
the same set-up as for the multi-point algebras of
Krichever-Novikov type as introduced and studied in
\cite{SLa,SLb,SLc,SDiss,Scocyc,Saff}. In the definition of the Lax
operators now the elements are allowed to
have poles  at the points of $I\cup O$. The splitting into these
subsets defines an almost-grading of the corresponding algebras.
It can be shown that for the simple Lie algebra case the space of
cohomology classes which are bounded from above (i.e. 
those which vanish 
if  evaluated for pairs of homogenous elements with sum of
degrees above a uniform threshold) is $K$-dimensional ($K=\#I$).
In the two-point case  every bounded cocycle is  local. This is
not the case here. By techniques similar to \cite{Saff} it turns
out that up to rescaling there is a unique cohomology class which is local. A
corresponding result is true for $\glb(n)$, i.e., the
space of local and $\L$-invariant cohomology classes will be 
two-dimensional. Details will appear in a forthcoming paper
\cite{SSlaxm}.
\newpage
\section{The algebras and their almost-grading}\label{S:algebras}
\subsection{The algebras}\label{S:alg}
$ $

Let $\Sigma$ be a compact Riemann surface of genus $g$ with 
two marked points $P_+$ and $P_-$. For $n\in\N$ we fix
$n\cdot g$ additional  points 
\begin{equation}
W:=\{\ga_s\in\Sigma\setminus\{P_+,P_-\}\mid s=1,\ldots, n g\}.
\end{equation}
To every point $\ga_s$ we assign a vector $\a_s\in\C^n$.
The system 
\begin{equation}
T:=\{(\ga_s,\a_s)\in\Sigma\times \C^n\mid s=1,\ldots, n g\}
\end{equation}
is called 
{\it Tyurin data} below. 
This data is related to
the moduli of  vector bundles over $\Sigma$.
In particular, for generic values of $(\ga_s,\a_s)$ with 
$\a_s\ne 0$ the tuples of pairs 
 $(\ga_s,[\a_s])$ with $[\a_s]\in\P^{n-1}(\C)$ 
parameterize semi-stable rank $n$ degree $n g$ framed holomorphic 
vector bundles over $\Sigma$, see \cite{Tyvb}.

We fix local coordinates
$z_\pm$ at $P_\pm$ and $z_s$ at $\ga_s$,  $s=1,\ldots, n g$. 
In the following let $\g$ be  one of the matrix algebras
$\gl(n)$, $\sln(n)$, $\so(n)$, $\spn(2n)$, or $\sn(n)$, where
the latter  denotes the algebra of scalar matrices.

We will consider meromorphic functions
\begin{equation}
L:\ \Sigma\ \to\  \g,
\end{equation}
which are
holomorphic outside  $W\cup \{P_+, P_-\}$, have at most poles
of order one (resp. of order two for $\spn(2n)$) 
at the points in $W$, and fulfill certain 
conditions at $W$ depending on $T$ and $\g$.
The singularities at $W$ are called {\it weak singularities}.
These objects were introduced by Krichever  \cite{Klax}  for $\gl(n)$ 
in the context of Lax operators for algebraic curves,
and further generalized in \cite{KSlax}.
In particular, the additional requirements for the expansion
at $W$ we  give now were introduced there.

\medskip
\noindent
The above mentioned conditions for {\bf $\gl(n)$} 
are as follows. Let $T$ be fixed.
For $s=1,\ldots, ng$ we require that there exist $\b_s\in\C^n$ 
and $\ka_s\in \C$ such that the
function $L$ has the following expansion at $\ga_s\in W$ 
\begin{equation}\label{E:glexp}
L(z_s)=\frac {L_{s,-1}}{z_s}+
L_{s,0}+\sum_{k>0}L_{s,k}{z_s^k}
\end{equation}
with
\begin{equation}\label{E:gldef}
L_{s,-1}=\a_s \b_s^{t},\quad
\tr(L_{s,-1})=\b_s^t \a_s=0,
\quad
L_{s,0}\,\a_s=\ka_s\a_s.
\end{equation}
In particular, $L_{s,-1}$ is a rank 1 matrix, and if 
$\a_s\ne 0$  then it is  
an eigenvector of $L_{s,0}$.
In \cite{KSlax} it is shown that
the requirements \refE{gldef} are independent of the chosen
coordinates $z_s$ and that 
the set of all such functions constitute an associative algebra under
the point-wise matrix multiplication. 
We denote it by $\glb(n)$.

The algebra $\glb(n)$ depends both on the choice of the Tyurin parameters and
of the two points $P_+$ and $P_-$. Nevertheless we omit 
this dependence in the notation.
In view of the above relation to the moduli space of vector bundles 
note that for $\lambda_s\in\C^*$ the values $\alpha_s'=\lambda_s \alpha_s$ 
will define the same algebra as the values $\alpha_s$.

The constraints \refE{glexp} and \refE{gldef} at $W$
imply that the elements of the 
Lax operator algebra can be considered as sections of
the endomorphism bundle $End(B)$, where $B$ is the vector bundle 
corresponding to the Tyurin data.

\medskip

The splitting $\gl(n)=\sn(n)\oplus \sln(n)$ given by
\begin{equation}
X\mapsto \left(\ \frac {\tr(X)}{n}I_n\ ,\ X-\frac {\tr(X)}{n}I_n\ \right),
\end{equation}
where $I_n$ is the $n\times n$-unit matrix,
induces a corresponding splitting for the  Lax operator 
algebra  $\glb(n)$:
\begin{equation}
 \glb(n)=\snb(n)\oplus \slnb(n).
\end{equation}
For {\bf $\slnb(n)$} the only additional condition  is that 
in \refE{glexp} all matrices $L_{s,k}$ are  trace-less.
The condition \refE{gldef}  remains unchanged.

For {\bf $\snb(n)$} all matrices in \refE{glexp}
are scalar  matrices. This implies that
the corresponding 
$L_{s,-1}$  vanish. In particular, the elements
of $\snb(n)$ are holomorphic at $W$.
Also $L_{s,0}$, as a scalar matrix, has every $\a_s$ 
as eigenvector.
This means that beside the holomorphicity there are no
further conditions.

\medskip

In the case of {\bf $\so(n)$} we require that
all $L_{s,k}$ in \refE{glexp} are  skew-symmetric.
In particular, they are trace-less.
The set-up has to be slightly modified following \cite{KSlax}.
First only  those Tyurin parameters $\a_s$ are allowed which satisfy
$\a_s^t\a_s=0$.
Then, \refE{gldef} is modified in  the following way:
\begin{equation}\label{E:sodef}
L_{s,-1}=\a_s\b_s^t-\b_s\a_s^t,
\quad
\tr(L_{s,-1})=\b_s^t\a_s=0,
\quad
L_{s,0}\,\a_s=\ka_s\a_s.
\end{equation}
Again \refE{sodef} does not depend on the coordinates $z_s$ and 
under the point-wise
matrix commutator the set of such maps constitute a Lie algebra,
see  \cite{KSlax}.

\medskip
For {\bf $\spn(2n)$}
we consider  a symplectic form  $\hat\sigma$  
for $\C^{2n}$ given by
a non-degenerate skew-symmetric matrix $\sigma$.
Without loss of generality we might even assume 
that this matrix is given in 
the 
standard form
$\sigma=\begin{pmatrix} 0& I_n
\\ -I_n&0
\end{pmatrix}
$.
The Lie algebra $\spn(2n)$ is the Lie algebra of
matrices $X$ such that $X^t\sigma+\sigma X=0$.
This is equivalent to $X^t=-\sigma X\sigma^{-1}$, which implies that
$\tr(X)=0$.
For the standard form above, 
$X\in\spn(2n) $ if and only if
\begin{equation}
X=\begin{pmatrix} A&B
\\
C&-A^t
\end{pmatrix}, \qquad B^t=B,\quad C^t=C.
\end{equation}
At the weak singularities we have the expansion
\begin{equation}\label{E:glexpsp}
L(z_s)=\frac {L_{s,-2}}{z_s^2}+\frac {L_{s,-1}}{z_s}+
L_{s,0}+L_{s,1}{z_s}+\sum_{k>1}L_{s,k}{z_s^k}.
\end{equation}
The condition \refE{gldef} is  modified as
follows (see \cite{KSlax}):
there exist $\b_s\in\C^{2n}$, 
$\nu_s,\ka_s\in\C$ such that 
\begin{equation}\label{E:spdef}
L_{s,-2}=\nu_s \a_s\a_s^t\sigma,\quad
L_{s,-1}=(\a_s\b_s^t+\b_s\a_s^t)\sigma,
\quad{\b_s}^t\sigma\a_s=0,\quad
L_{s,0}\,\a_s=\kappa_s\a_s.
\end{equation}
{}Moreover, we require
\begin{equation}
\a_s^t\sigma L_{s,1}\a_s=0.
\end{equation}
Again in \cite{KSlax} it is shown that under the point-wise
matrix commutator the set of such maps constitute a Lie algebra.

\medskip

We summarize
\begin{theorem}[\cite{KSlax}]
The space $\gb$ of Lax operators  is a Lie algebra under the
point-wise matrix commutator. For $\gb=\glb(n)$ it is an associative 
algebra under point-wise matrix multiplication.
\end{theorem}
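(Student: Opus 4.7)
The plan is to verify closure under the appropriate operation directly from the local constraints at each weak singularity. Since point-wise multiplication and commutation of meromorphic matrix-valued functions are again meromorphic and preserve holomorphy outside their pole sets, and since no conditions whatsoever are imposed at $P_\pm$, the only nontrivial step is to check, at each $\ga_s\in W$, that the required pole-order bound is respected and that the Tyurin constraints \refE{gldef}, \refE{sodef}, \refE{spdef} are preserved. The case $\snb(n)$ is immediate, as its elements are just holomorphic scalar-matrix-valued functions at $W$.

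First I would handle $\glb(n)$, which automatically yields $\slnb(n)$ since the commutator of trace-free matrices is trace-free. Given $L,L'\in\glb(n)$ with $L_{s,-1}=\a_s\b_s^t$, $L'_{s,-1}=\a_s(\b_s')^t$, eigenvector relations $L_{s,0}\a_s=\k_s\a_s$, $L'_{s,0}\a_s=\k_s'\a_s$, and $\b_s^t\a_s=(\b_s')^t\a_s=0$, the $z_s^{-2}$ coefficient of $LL'$ equals $L_{s,-1}L'_{s,-1}=(\b_s^t\a_s)\,\a_s(\b_s')^t=0$, so the putative double pole cancels. The $z_s^{-1}$ coefficient is $\a_s\tilde\b_s^t$ with $\tilde\b_s:=(L'_{s,0})^t\b_s+\k_s\b_s'$, and $\tilde\b_s^t\a_s=\b_s^tL'_{s,0}\a_s+\k_s(\b_s')^t\a_s=\k_s'\b_s^t\a_s+0=0$. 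Evaluating the $z_s^0$ coefficient on $\a_s$ and using $L_{s,1}L'_{s,-1}\a_s=(\b_s')^t\a_s\cdot L_{s,1}\a_s=0$ shows that $\a_s$ remains an eigenvector, with new eigenvalue $\tilde\k_s=\b_s^tL'_{s,1}\a_s+\k_s\k_s'$. This closes $\glb(n)$ under multiplication and hence under commutator.

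For $\sob(n)$ the commutator is manifestly skew-symmetric, and I would repeat the same local analysis with $L_{s,-1}=\a_s\b_s^t-\b_s\a_s^t$ and the additional isotropy condition $\a_s^t\a_s=0$. Expanding $[L_{s,-1},L'_{s,-1}]$ produces four pairs of terms proportional to $\b_s^t\a_s$, $(\b_s')^t\a_s$, and $\a_s^t\a_s$; all vanish by the constraints, so the potential $z_s^{-2}$ pole disappears. An analogous bookkeeping of the $z_s^{-1}$ and $z_s^0$ coefficients, isolating the components along $\a_s$ and along the $\a_s$-eigenspace of $L_{s,0}$, recovers the prescribed shape of the principal part and the eigenvector condition on the constant term.

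The hard part will be $\spnb(2n)$, where the pole is of order two and the constraints \refE{spdef} couple the Laurent coefficients at orders $-2$, $-1$, $0$, and $1$ simultaneously. Here I would verify in order: (i) the $z_s^{-4}$ and $z_s^{-3}$ coefficients of $[L,L']$ vanish by $\a_s^t\sigma\a_s=0$ (automatic from $\sigma^t=-\sigma$) and $\b_s^t\sigma\a_s=0$; (ii) the $z_s^{-2}$ coefficient retains the form $\tilde\nu_s\,\a_s\a_s^t\sigma$; (iii) the $z_s^{-1}$ coefficient has the form $(\a_s\tilde\b_s^t+\tilde\b_s\a_s^t)\sigma$ with $\tilde\b_s^t\sigma\a_s=0$; and (iv) the eigenvector conditions at orders $0$ and $1$ persist. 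Each cancellation is driven by the contraction identities above together with $\a_s^t\sigma L_{s,1}\a_s=0$, and the bookkeeping, while tedious, is essentially mechanical. Since all of these verifications are carried out in \cite{KSlax}, the statement is simply quoted here rather than reproved.
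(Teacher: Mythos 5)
Your proposal is correct as far as it goes, but it is worth noting that the paper itself offers \emph{no} proof of this theorem: it is stated with the citation \cite{KSlax} precisely because the closure computations are carried out there, and the present paper only summarizes them. So your direct local verification actually goes beyond the paper's treatment, and it is the right strategy — closure is indeed a purely local question at the points of $W$, since no conditions are imposed at $P_\pm$ and meromorphy is preserved by products. Your $\glb(n)$ computation is complete and correct: the double pole cancels via $\b_s^t\a_s=0$, the residue is again of the form $\a_s\tilde\b_s^t$ with $\tilde\b_s^t\a_s=0$, and the eigenvector condition survives with eigenvalue $\b_s^tL'_{s,1}\a_s+\k_s\k_s'$; this also gives $\slnb(n)$ since Laurent coefficients of a commutator are sums of matrix commutators, hence traceless. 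One small imprecision in your $\sob(n)$ sketch: in $[L_{s,-1},L'_{s,-1}]$ the two terms proportional to $\b_s^t\b_s'$ and $(\b_s')^t\b_s$ do \emph{not} vanish individually by the constraints — they cancel against each other because these scalars are equal; this is exactly the same mechanism as the $\epsilon=\tb^t\b$ cancellation in the paper's Appendix computation for $\nabla^{(\w)}$, which is the closest analogue in this paper to the bookkeeping you describe. Finally, for $\spnb(2n)$ you only list the steps (i)--(iv) and then defer to \cite{KSlax}; a self-contained proof would require carrying these out (the Appendix again shows the scale of the symplectic bookkeeping), but since the paper itself simply quotes \cite{KSlax} for all three cases, your treatment is, if anything, more detailed than the source you were asked to match.
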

These Lie algebras are called {\it Lax operator algebras}.

If we take  $\a_s=0\in\C^n$ (resp. $\in\C^{2n}$) as Tyurin parameter
then there 
will be no weak singularities. In this way the usual two-point
Krichever-Novikov current algebras 
$\gb=\g\otimes\A$ are obtained \cite{Sha}.
Here $\A$ is the algebra of meromorphic functions on $\Sigma$ holomorphic
outside $P_\pm$ (see below).
{}From this point of view the Lax operator algebras might be also
called {\it generalized Krichever-Novikov current algebras}.

As noticed  above, for $\snb(n)$ there are  no weak singularities
and there are  no conditions  for the constant term.
Hence  $\snb(n)$  coincides with the Krichever-Novikov function
algebra, i.e. 
\begin{equation}\label{E:sint}
\snb(n)\cong \sn(n)\otimes \A\cong \A,
\end{equation}
as associative algebras.

Note also that if in addition the
genus is equal to zero, the
Lax operator  algebras give the
classical Kac-Moody current algebras.

\bigskip
\subsection{The almost-graded structure}
$ $

\medskip
By means of the power series
expansions at the points $P_+$ and $P_-$ 
we are able to introduce an almost-grading, as it is done for the
Krichever-Novikov current algebras, \cite{SSS}, \cite{Saff}.

To write down explicitly the conditions we have to restrict ourselves
with the case when all our marked points (including the points in $W$) 
are in generic position.
Let $\gb$ be one of the Lax operator  algebras introduced above.
For $m\le -g-1$ or $m\ge 1$ we consider the subspace
\begin{multline}\label{E:almdeg}
\gb_m:=\{L\in\gb\mid
\exists X_+,X_-\in\g \quad \text{ such that }
\\
L(z_+)=X_+z^m_++O(z_+^{m+1}),\ 
L(z_-)=X_-z^{-m-g}_-+O(z_-^{-m-g+1})\}.
\end{multline}
For $\g$ semi-simple 
and \ $\{\gamma_s\in W\mid \a_s\ne 0\}\ne\emptyset\ $\  this
definition works also for the other values of $m$.
If $\g$ is equal to $\gl(n)$  or $\sn(n)$  then in  the cases
$-g\le m\le 0$
the conditions at $P_-$ have to be slightly modified 
\cite{KSlax}. 
In fact, we  take  
$\gl(n)_m=\sln(n)_m\oplus\sn(n)_m$ 
and use for $\sln(n)$ the grading introduced above and for
$\sn(n)\cong\A$ the grading of $\A$, which we recall in
\refS{Amod}, see also \cite{KNFa}.
If $\{\gamma_s\in W\mid \a_s\ne 0\}=\emptyset$ then $\gb=\g\otimes \A$
and the grading comes from the grading of  $\A$,
see \cite{SSS}.

We call the $\gb_m$ the homogenous subspaces of degree $m$
in $\gb$.
\begin{theorem} \cite{KSlax}\label{T:almgrad}
The Lie algebras $\gb$ are almost-graded algebras 
with respect to the degree given by the  $\gb_m$'s.
More precisely, 

\noindent
(1) $\dim \gb_m=\dim \g$,

\noindent
(2)
$\gb=\bigoplus\limits_{m\in\Z}^{\hphantom{A}}\gb_m$ 

\noindent
(3) 
there exist a constant $M$ such that 
\begin{equation}\label{E:alm}
[\gb_m,\gb_k]\subseteq \bigoplus_{h=m+k}^{m+k+M}\gb_h.
\end{equation}
\end{theorem}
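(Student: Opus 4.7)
The plan is to establish (1), (2), and (3) in turn, assuming the generic-position hypothesis on $P_\pm$ and the $\ga_s$ throughout.

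For (1), I would interpret a Lax operator $L\in\gb$ as a section on $\Sigma\setminus\{P_+,P_-\}$ of (an extension of) the endomorphism bundle $\mathrm{End}(B)$ of the rank-$n$ holomorphic vector bundle $B$ determined by the Tyurin data $T$; for $\slnb$, $\sob$, $\spnb$ one passes to the appropriate sub-sheaves (traceless, skew, or symplectic endomorphisms). The constraints \refE{gldef}, \refE{sodef}, \refE{spdef} are precisely what is needed for the Laurent data at each $\ga_s$ to glue into a section of such a sheaf. The subspace defined by $\ord_{P_+}(L)\ge m$ and $\ord_{P_-}(L)\ge -m-g$ is then the space of global sections of this sheaf twisted by the divisor $-mP_++(m+g)P_-$; the divisor has degree $g$, the sheaf has rank $\dim\g$ and degree $0$, so Riemann--Roch gives index $\dim\g\cdot g + \dim\g\cdot(1-g)=\dim\g$. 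For generic data the relevant $H^1$ vanishes, and the same argument with one higher twist at $P_+$ forces the kernel of the leading-term evaluation $\gb_m\to\g$, $L\mapsto X_+$, to be trivial. Hence this evaluation is an isomorphism and $\dim\gb_m=\dim\g$. For $\glb(n)$ one uses the splitting $\glb(n)=\snb(n)\oplus\slnb(n)$ with the scalar factor graded via \refE{sint}.

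For (2), a parallel Riemann--Roch count applied to the pole-bounded subspace $F^{k_+,k_-}:=\{L\in\gb:\ord_{P_+}L\ge -k_+,\ \ord_{P_-}L\ge -k_-\}$ gives $\dim F^{k_+,k_-}=(k_++k_--g+1)\dim\g$ once $k_++k_-$ is sufficiently large. On the other hand, elements of distinct $\gb_m$ carry distinct leading orders at $P_+$, so $\{\gb_m\}$ are pairwise linearly independent; the inclusion $\bigoplus_{m=-k_+}^{k_--g}\gb_m\subseteq F^{k_+,k_-}$ therefore forces equality of total dimensions and hence equality of spaces. Since every $L\in\gb$ lies in some $F^{k_+,k_-}$, the full direct-sum decomposition follows. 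For (3), a direct leading-term computation of the commutator for $L\in\gb_m$ and $L'\in\gb_k$ yields $[L,L'](z_+)=O(z_+^{m+k})$ at $P_+$ and $\ord_{P_-}[L,L']\ge -m-k-2g$ at $P_-$. Decomposing $[L,L']$ via (2) then forces only indices $h$ with $m+k\le h\le m+k+g$ to contribute, so $M=g$ suffices.

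The main obstacle is step (1). Each classical series requires a separate verification that the weak-singularity constraints at the $\ga_s$ exactly cancel the would-be extra dimensions arising from the allowed (simple or double) poles there, so that the Riemann--Roch count collapses to $\dim\g$ and not to something larger. This is most delicate for $\spnb(2n)$, where the second-order expansion \refE{glexpsp} together with the auxiliary relation $\a_s^t\sigma L_{s,1}\a_s=0$ must be translated correctly into the sheaf structure. The generic-position hypothesis is needed throughout to force the relevant $H^1$ groups to vanish and to ensure surjectivity of the leading-term evaluation map onto $\g$.
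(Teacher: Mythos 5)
First, a point of reference: the paper you are working from does not prove \refT{almgrad} at all --- it is imported verbatim from \cite{KSlax}, so your attempt can only be compared with the strategy of that reference, which is indeed a Riemann--Roch count of the kind you outline. Your overall architecture matches it in broad strokes: compute $\dim\gb_m$ by Riemann--Roch plus generic vanishing of $H^1$, obtain directness and exhaustion of the sum $\bigoplus_m\gb_m$ by a sandwich count on pole-bounded subspaces, and deduce the bound in (3) from the orders at $P_\pm$, using that a nonzero element of $\gb_h$ has order exactly $h$ at $P_+$ and exactly $-h-g$ at $P_-$ (both of which again need the genericity/injectivity from step (1)). Granting (1), your parts (2) and (3) are essentially complete and give $M=g$, consistent with the remark following the theorem.

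The genuine gap is exactly the step you flag but do not carry out, and it is not a routine verification. Your count rests on the assertion that the weak-singularity conditions cut out a sheaf of rank $\dim\g$ and degree $0$. For $\gl(n)$ this is Krichever's identification of Lax operators with sections of $\mathrm{End}(B)$ and is unproblematic; but for $\so(n)$ and $\spn(2n)$ the phrase ``appropriate sub-sheaves of skew or symplectic endomorphisms'' is not yet meaningful: since $\deg B=ng>0$, the bundle $B$ carries no global nondegenerate symmetric or symplectic form, so any such form must degenerate (precisely at the points $\ga_s$), and defining the resulting sheaf and computing its rank and degree is where all the content lies --- degree $0$ is exactly what must be proven, not an input. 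Equivalently, in the bare-hands version of the count one must check that each weak singularity imposes exactly $\dim\g$ (resp.\ $2\dim\g$ for $\spn(2n)$) independent linear conditions, cancelling the dimensions contributed by the allowed pole. This check is delicate and series-specific: for $\so(n)$ it closes only because $\a_s$ is isotropic, so that $L_{s,0}\mapsto L_{s,0}\a_s$ maps skew-symmetric matrices into the hyperplane $\a_s^\perp$ which contains $\a_s$, making the eigenvector requirement $n-2$ rather than $n-1$ conditions; for $\spn(2n)$ the count closes only when the auxiliary condition $\a_s^t\sigma L_{s,1}\a_s=0$ is included, and the order-two pole structure must be encoded in the sheaf as well. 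Without these computations (or the equivalent sheaf-theoretic statements), the claim $\dim\gb_m=\dim\g$ --- and with it all three assertions of the theorem --- is not established.
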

In \cite{KSlax}, it is found that if $\g=\sln(n),\spn(2n),\so(n)$ 
then $M=g$. We do not need it in the following.

\begin{remark}
The result about the almost-grading is also true if the points 
$P_+$, $P_-$ and $W$ are
not in generic position. In this case the requirement for the orders
at the point $P_-$ has to be adapted.
\end{remark}
\begin{proposition}\label{P:locex}
Let $X$ be an element of $\g$. For each $m$  there is
a unique element $X_m$ in $\gb_m$ such that 
\begin{equation}\label{E:locex}
X_m=
Xz_+^m+O(z_+^{m+1}).
\end{equation}
\end{proposition}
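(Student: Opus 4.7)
The plan is to recognize \refE{locex} as the statement that the ``leading Taylor coefficient at $P_+$'' map
\begin{equation*}
\phi_m \colon \gb_m \longrightarrow \g, \qquad L \longmapsto X_+ \ \text{where}\ L(z_+)=X_+z_+^m+O(z_+^{m+1}),
\end{equation*}
is a linear isomorphism. Producing $X_m$ is then the same as inverting $\phi_m$ at $X$, and uniqueness of $X_m$ is exactly injectivity of $\phi_m$. By \refT{almgrad}(1) both source and target are finite-dimensional of the same dimension $\dim\g$, so by rank-nullity it suffices to prove injectivity.

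For injectivity I would argue as follows. Suppose $L\in\gb_m$ lies in $\ker\phi_m$, so $L(z_+)=O(z_+^{m+1})$; reading off the next Taylor coefficient, $L(z_+)=X'_+z_+^{m+1}+O(z_+^{m+2})$ for some $X'_+\in\g$. Membership in $\gb_m$ imposes on $L$ a pole of order at most $m+g$ at $P_-$, which is trivially of order at most $m+g+1$, and this matches exactly the $P_-$-bound required for membership in $\gb_{m+1}$. The weak-singularity conditions at $W$ are inherited from $\gb$ and hence identical for $\gb_m$ and $\gb_{m+1}$. Therefore $L\in\gb_m\cap\gb_{m+1}$, which by the direct-sum decomposition $\gb=\bigoplus_k\gb_k$ from \refT{almgrad}(2) forces $L=0$.

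The one place requiring extra vigilance is the transitional range $-g\le m\le 0$ in the $\glb(n)$ and $\snb(n)$ cases, where the condition at $P_-$ defining $\gb_m$ is not given directly by \refE{almdeg} but by the modification mentioned there (via the grading of $\A$ recalled in \refS{Amod}). In that range one must verify that the modified bound at $P_-$ for $\gb_m$ still implies the corresponding bound for $\gb_{m+1}$, so that the step ``$L\in\ker\phi_m \Rightarrow L\in\gb_{m+1}$'' remains valid. This follows from the analogous monotonicity built into the grading of $\A$, so it is really only a bookkeeping issue, but it is the main thing to watch out for in an otherwise short proof.
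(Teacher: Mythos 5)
Your proof is correct, and it is in fact more complete than the paper's own argument, which disposes of the proposition in one line: from $\dim\gb_m=\dim\g$ (part (1) of \refT{almgrad}) it follows that there is a unique combination of the basis elements of $\gb_m$ with the prescribed leading term. That one-liner implicitly assumes precisely what you prove explicitly, namely that the leading-coefficient map $\phi_m\colon\gb_m\to\g$ is injective (equivalently, that the leading coefficients at $P_+$ of a basis of $\gb_m$ are linearly independent in $\g$); the paper leans here on the way the basis elements are constructed in \cite{KSlax}. Your injectivity step --- an element of $\gb_m$ vanishing to order $m+1$ at $P_+$ still satisfies the $P_-$-bound and the $W$-conditions required for membership in $\gb_{m+1}$, hence lies in $\gb_m\cap\gb_{m+1}$, which is $\{0\}$ by the directness of the sum in part (2) of \refT{almgrad} --- supplies this missing piece with no appeal to the explicit construction, and that is what your route buys: a self-contained argument using only the stated properties of the almost-grading. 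Your caveat about the transitional range $-g\le m\le 0$ for $\glb(n)$ and $\snb(n)$ is well placed, but it resolves even more simply than by monotonicity of the $P_-$-bounds: there the scalar part of degree $m$ is $\snb(n)_m\cong\A_m=\langle A_m\rangle$, and $A_m$ has order exactly $m$ at $P_+$, so on that part $\ker\phi_m$ is trivially zero without any intersection trick, while the trace-free part $\slnb(n)_m$ is covered by your generic argument; the same remark handles the case $\gb=\g\otimes\A$ (all $\a_s=0$), where the grading is likewise inherited from $\A$.
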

\begin{proof}
{}From the first statement of \refT{almgrad}, i.e. that
 $\dim \gb_m=\dim \g$ 
it follows that there is a unique combination of the basis elements
such that \refE{locex} is true.
\end{proof}
Given $X\in\g$, by $X_m$  we denote the unique element
in $\gb_m$ defined via \refP{locex}.

Sometimes it will be useful to consider also the
induced filtration
\begin{equation}
F_k:=\bigoplus_{m\ge k} \laxg_m,\qquad
F_k\subseteq F_{k'},\ k\ge k',\qquad
[F_k,F_m]\subseteq  F_{k+m}.
\end{equation}

The result \refE{alm} can be strengthen in the following way
\begin{proposition}
Let $X_k$ and $Y_m$ be the elements in $\gb_k$ and $\gb_m$ 
corresponding to $X,Y\in\g$ respectively then 
\begin{equation}\label{E:alalg}
[X_k,Y_m]={[X,Y]}_{k+m}+L, 
\end{equation}
with $[X,Y]$ the bracket in $\g$ and $L\in F_{k+m+1}$.
\end{proposition}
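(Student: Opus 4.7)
The plan is to exploit two inputs already provided: (i) the almost-gradedness from \refT{almgrad}, which constrains $[X_k,Y_m]$ to lie in the finite direct sum $\bigoplus_{h=k+m}^{k+m+M}\gb_h$, and (ii) the uniqueness statement of \refP{locex}, which pins down a homogeneous element by its leading coefficient at $P_+$. The key observation is that the pointwise matrix commutator behaves perfectly with respect to the local expansion at $P_+$, so the leading term of $[X_k,Y_m]$ at $P_+$ is forced to be $[X,Y]z_+^{k+m}$.

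More precisely, I would first write the local expansions at $P_+$ from the definition \refE{almdeg}:
\begin{equation*}
X_k(z_+)=Xz_+^k+O(z_+^{k+1}),\qquad Y_m(z_+)=Yz_+^m+O(z_+^{m+1}).
\end{equation*}
A direct computation of the pointwise commutator gives
\begin{equation*}
[X_k,Y_m](z_+)=[X,Y]\,z_+^{k+m}+O(z_+^{k+m+1}).
\end{equation*}
By \refE{alm}, decompose $[X_k,Y_m]=\sum_{h=k+m}^{k+m+M}Z_h$ with $Z_h\in\gb_h$. Each $Z_h$ with $h>k+m$ contributes $O(z_+^{k+m+1})$ at $P_+$, while $Z_{k+m}\in\gb_{k+m}$ has an expansion $W\,z_+^{k+m}+O(z_+^{k+m+1})$ for some $W\in\g$. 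Matching the leading terms forces $W=[X,Y]$, and then the uniqueness clause of \refP{locex} identifies $Z_{k+m}={[X,Y]}_{k+m}$. Setting $L:=\sum_{h>k+m}Z_h$, one sees that $L\in F_{k+m+1}$, which yields \refE{alalg}.

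There is essentially no serious obstacle here: the statement is a refinement of the almost-gradedness that follows by reading off the leading coefficient at $P_+$. The only point requiring minor care is making sure the definition \refE{almdeg} is applied in the range of indices where it is literally valid; in the exceptional ranges (the finitely many degrees where the definition at $P_-$ must be adjusted for $\gl(n)$ or $\sn(n)$) the argument is unchanged since it uses only the $P_+$-expansion, and \refP{locex} still supplies the required uniqueness.
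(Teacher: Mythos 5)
Your proof is correct and follows essentially the same route as the paper: expand $X_k$ and $Y_m$ at $P_+$, observe that the pointwise commutator has leading term $[X,Y]z_+^{k+m}$, and conclude that the difference from $([X,Y])_{k+m}$ lies in $F_{k+m+1}$. Your version merely makes explicit the homogeneous decomposition and the leading-coefficient matching that the paper's shorter proof leaves implicit in the step ``$O(z_+^{k+m+1})\in F_{k+m+1}$.''
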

\begin{proof}
Using for $X_k$ and $Y_m$ the expression \refE{locex} we obtain
$$
[X_k,Y_m]=[X,Y]z_+^{k+m}+O(z_+^{k+m+1}).
$$
Hence,
$$
[X_k,Y_m]-([X,Y])_{k+m}=O(z_+^{k+m+1})\in F_{k+m+1},
$$
which is the claim.
\end{proof}
\begin{lemma}\label{L:weak}
Let $\g$ be simple and $y\in\gb$ then for every $m\in\Z$ there
exists finitely many elements 
$y^{(i,1)}, y^{(i,2)}\in\gb$, $i=1,\ldots, l=l(m)$ such that
\begin{equation}
y-\sum_{i=1}^l\;[y^{(i,1)}, y^{(i,2)}]\quad\in\quad F_m.
\end{equation}
\end{lemma}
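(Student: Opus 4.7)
The plan is to use \refP{locex} and the refined bracket identity \refE{alalg} to kill the homogeneous components of $y$ filtration level by filtration level, with simplicity of $\g$ providing $\g=[\g,\g]$ at each step. Since $\gb=\bigoplus_k\gb_k$, the element $y$ has only finitely many nonzero homogeneous components, and those of degree $\ge m$ already belong to $F_m$. It therefore suffices to show, for each homogeneous component $y_k\in\gb_k$ of $y$ with $k<m$, that $y_k$ equals modulo $F_m$ a finite sum of commutators.

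Fix such a $y_k$. Since $\dim\gb_k=\dim\g$, \refP{locex} identifies $\gb_k$ with $\g$ via $X\leftrightarrow X_k$; thus $y_k=X_k$ for a unique $X\in\g$. Simplicity of $\g$ gives $\g=[\g,\g]$, so one may write $X=\sum_{i=1}^{r}[A_i,B_i]$ for finitely many $A_i,B_i\in\g$. Lift each pair to $A_{i,k}\in\gb_k$ and $B_{i,0}\in\gb_0$ via \refP{locex} and apply \refE{alalg} term by term to obtain
\[
\sum_{i=1}^{r}[A_{i,k},B_{i,0}] \;=\; \sum_{i=1}^{r}\bigl([A_i,B_i]_{k}+L_i\bigr) \;=\; X_k + R,\qquad R\in F_{k+1}.
\]
Hence $y_k-\sum_{i=1}^{r}[A_{i,k},B_{i,0}]=-R\in F_{k+1}$: the leading component has been cancelled at the expense of introducing an error of strictly higher filtration degree.

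Now iterate. If $k+1\ge m$ we are done; otherwise decompose $R=R_{k+1}+R_{>k+1}$ with $R_{k+1}\in\gb_{k+1}$ and $R_{>k+1}\in F_{k+2}$, write $R_{k+1}=X'_{k+1}$ for some $X'\in\g$ via \refP{locex}, and apply the same construction to produce further commutators that cancel $R_{k+1}$ modulo $F_{k+2}$. Because the target level $m$ is fixed, after at most $m-k$ iterations the residual error lies in $F_m$, and concatenating the commutators produced at each stage yields the desired expression for $y_k$. Summing over the finitely many $k<m$ with $y_k\ne 0$ completes the proof.

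The main obstacle is conceptual rather than computational: one must recognize that \refE{alalg} is the decisive tool, as it asserts that $[A_{i,k},B_{i,0}]$ reproduces $[A_i,B_i]_k$ exactly modulo the next filtration level. Combined with $\g=[\g,\g]$ from simplicity, this makes each leading-term cancellation possible, while the fixed target $m$ guarantees that the iteration terminates after finitely many steps.
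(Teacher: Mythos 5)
Your proof is correct and follows essentially the same route as the paper: both cancel leading terms via the refined bracket identity \refE{alalg} together with perfectness of $\g$, iterating until the error lands in $F_m$. The only cosmetic differences are that you first split $y$ into homogeneous components (the paper peels off the leading term of $y$ directly) and that you use $\g=[\g,\g]$ in the form of finite sums of commutators, which is all that perfectness literally gives and is exactly what the lemma requires.
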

\begin{proof}
If the expansion of $y$ at $P_+$ starts with order $k$ then
$y=X_k+y'$ with $y'\in F_{k+1}$,
$X\in\g$ and $X_k$ is
the corresponding element of degree $k$.
As $\g$ is simple it is perfect, hence there exist 
$X^{(1)},X^{(2)}\in\g$ such that 
$X=[X^{(1)},X^{(2)}]$. This implies
\begin{equation}
X_k=[X^{(1)}_0,X^{(2)}_k]+y'',\ \text{with } y'' \in F_{k+1},\quad
\text{or }
y=[X^{(1)}_0,X^{(2)}_k]+(y'+y'').
\end{equation} 
Using the same argument for $(y+y'')\in F_{k+1}$ the claim follows
by induction.
\end{proof}
This lemma might be considered as {\it weak perfectness} for the 
Lax operator algebras.
Note that  the 
usual Krichever-Novikov current algebras $\gb$ for $\g$ simple 
are perfect, see \cite[Prop. 3.2]{Saff}.

\subsection{Module structure over $\A$ and $\L$}\label{S:Amod}
$ $

In the following we recall the definitions of the
Krichever-Novikov function
algebra $\A$ and of the Krichever-Novikov vector field algebra $\L$.
Let $\A$ respectively $\L$ be the space of meromorphic functions
respectively meromorphic vector fields 
on $\Sigma$, holomorphic on  $\Sigma\setminus \{P_+,P_-\}$.
In particular, they are holomorphic also at the points in $W$.
Obviously, $\A$ is an associative algebra under the point-wise
product and $\L$ is a Lie algebra under the Lie bracket of 
vector fields.
By exhibiting special basis elements \cite{KNFa} these algebras are
endowed with an almost-graded structure.

In the case of $\A$ we denote the basis by 
$\{A_m\mid m\in\Z\}$. The $A_m$ are given by the requirement
that $\ord_{P_+}(A_m)=m$ and a complementary requirement at
$P_-$ to fix $A_m$ up to a scalar multiple uniquely. 
For a generic $m$ and the points $P_+$ and $P_-$  in generic position
this requirement is 
  $\ord_{P_-}(A_m)=-m-g$.
To fix the scalar multiple we require that
locally at $P_+$, with respect to the chosen local coordinate $z_+$,
we have the expansion
\begin{equation}
A_m(z_+)=z_+^m+O(z_+^{m+1}).
\end{equation}
Based on these elements we set $\A_m=\langle A_m\rangle $ and obtain
the almost-graded (associative) algebra structure
\begin{equation}
\A=\bigoplus_{m\in\Z}\A_m,\qquad
\A_k\cdot \A_m\subseteq
\bigoplus_{h=k+m}^{k+m+M_1}\A_h,
\end{equation}
with a constant $M_1$ not depending on $k$ and $m$.
Moreover
\begin{equation}
A_k\cdot A_m=A_{k+m}+\sum_{h=k+m+1}^{k+m+M_1}\a_{k,m}^h
A_h,\quad  \a_{k,m}^h\in\C.
\end{equation}

\medskip
The vector field algebra $\L$ is defined in a similar manner.
Here the basis is  $\{e_m\mid m\in\Z\}$ with
the requirement that $\ord_{P_+}(e_m)=m+1$, corresponding
orders at $P_-$ (for generic choices $\ord_{P_-}(e_m)=-m-3g-3$)
and locally at $P_+$ the expansion 
\begin{equation}
e_m(z_+)=\left(z_+^{m+1}+O(z_+^{m+2})\right)\frac {d}{dz_+}.
\end{equation}
We put $\L_m=\langle e_m\rangle$ and obtain the almost-graded structure
\begin{equation}
\L=\bigoplus_{m\in\Z} \L_m,\qquad
[\L_k,\L_m]\subseteq \bigoplus_{h=k+m}^{k+m+M_2}\L_h,
\end{equation}
with a constant $M_2$ not depending on $k$ and $m$. We obtain
\begin{equation}
[e_k,e_m]=(m-k)\,e_{k+m}+\sum_{h=k+m+1}^{k+m+M_2}\b_{k,m}^h
e_h,\quad  \b_{k,m}^h\in\C.
\end{equation}

The elements of the Lie algebra $\L$ act on $\A$ as derivations.
This makes the space $\A$  an almost-graded module over $\L$.
In particular, we have
\begin{equation}
e_k\ldot A_m=mA_{k+m}+\sum_{h=k+m+1}^{k+m+M_3}\epsilon_{k,m}^h
A_h,\quad  \epsilon_{k,m}^h\in\C,
\end{equation}
with a constant $M_3$ not depending on $k$ and $m$.
All these constants $M_i$ can be easily given \cite{KNFa}. But
their exact value will not play any role in the following.

\bigskip

By point-wise multiplication, the space $\gb$ is a module over the 
associative algebra $\A$. Obviously the relations
\refE{glexp}, \refE{gldef}, \refE{sodef}, \refE{spdef}, 
are not disturbed. A direct calculation of the
possible orders at the points $P_+$ and $P_-$ shows that there
exists a constant $M_4$ (not depending on $k$ and $m$) such that
\begin{equation}
\A_k\cdot\gb_m\subseteq\bigoplus_{h=k+m}^{k+m+M_4}
\gb_h.
\end{equation}
In other words, $\gb$ is an almost-graded module over $\A$.
By considering the degree at $P_+$ we see that for $X\in\g$
\begin{equation}
A_m\cdot X_0=X_m+L,\quad L\in F_{m+1}.
\end{equation}
In general we do not have
$A_m\cdot X_0=X_m$ as the orders at $P_-$ will not coincide.
Also, as long as $\a\ne 0$ the element $A_m\cdot X$ is not
necessarily  an element of $\gb$, as $\a$ is not necessarily 
an eigenvector of $X$. 
Note that  $A_m\cdot X$ is always an element of the
Krichever-Novikov current algebra $\g\otimes\A$.
\bigskip
 
Next we introduce an  action of $\L$ on $\laxg$. 
Recall that $\gb=\glb(n)$ should be interpreted as the endomorphism algebra
of the space of meromorphic sections of a vector bundle. 
The action of $\L$ on $\gb$ should come from the action of $\L$ on 
these sections by taking the covariant derivative with respect to
some connection $\nabla^{(\omega)}$ with
a connection form $\omega$ \cite{Kiso}.

We introduce $\nabla^{(\omega)}$ following the lines of 
\cite{Klax}, \cite{Kiso} with certain modifications.
The connection form $\w$ should be  a $\g$-valued 
meromorphic 1-form, holomorphic outside $P_+$, $P_-$ and $W$, and
has a certain prescribed behavior at the points in $W$.
For $\gamma_s\in W$ with $\a_s= 0$ the requirement is that
$\w$ is also regular there.
For the points $\gamma_s$ with   $\a_s\ne 0$ we require that
it has  the  expansion
\begin{equation}\label{E:connl}
\w(z_s)=\left(\frac {\w_{s,-1}}{z_s}+\w_{s,0}+\w_{s,1}+
\sum_{k>1}\w_{s,k}z_s^k\right)dz_s.
\end{equation}
The following conditions were given in 
\cite{Klax} for $\glb(n)$ and for the other 
classical Lie algebras  in
\cite{KSlax}.
For $\gl(n)$ we take: there 
exist $\tb_s\in\C^n$ 
and $\tk_s\in \C$ such that 
\begin{equation}\label{E:gldefc}
\w_{s,-1}=\a_s \tb_s^{t},\quad
\w_{s,0}\,\a_s=\tk_s\a_s,
\quad
\tr(\w_{s,-1})=\tb_s^t \a_s=1.
\end{equation}
Note that compared to \refE{gldef} only the last condition was
modified.

For $\so(n)$ we take: 
there 
exist $\tb_s\in\C^n$ 
and $\tk_s\in \C$ such that
\begin{equation}\label{E:sodefc}
\w_{s,-1}=\a_s\tb_s^t-\tb_s\a_s^t,
\quad
\w_{s,0}\,\a_s=\tilde\ka_s\a_s,
\quad
\tb_s^t\a_s=1.
\end{equation}

For $\spn(2n)$ we take:
there exists 
$\tb_s\in\C^{2n}$, 
$\tilde\ka_s\in\C$ such that 
\begin{equation}\label{E:spdefc}
\w_{s,-1}=(\a_s\tb_s^t+\tb_s\a_s^t)\sigma,
\quad
\w_{s,0}\,\a_s=\tilde\kappa_s\a_s,
\quad 
\a^t_s\sigma\w_{s,1}\a_s=0,\quad
\tb_s^t\sigma\a_s=1.
\end{equation}
\begin{remark}
Compared to \refE{gldef}, \refE{sodef}, \refE{spdef} only the 
condition $\b_s^{t}\a^s=0$ (resp.  $\b_s^{t}\sigma\a^s=0$) was replaced by
$\tb_s^{t}\a^s=1$ (resp.  $\tb_s^{t}\sigma\a^s=1$). 
For $\spn(2n)$ we could also allow  additional poles of order two
at the points $\ga_s$ of the form
$(\tilde\nu\a_s\a^t_s\sigma)/z_s^2$ without changing anything in the
following.
\end{remark}

In the same way as in \cite{KSlax} the existence 
of the elements of $\gb_m$ is shown, one shows that there exist
many connections  fulfilling these conditions.
We might even require that
the connection form is holomorphic at $P_+$, and  we
will do this in the following without any further mentioning.
Note also that if all $\a_s=0$ we could take $\w=0$.

The induced  connection for the algebra will be
\begin{equation}\label{E:conng}
\nabla^{(\w)}=d+[\w,.].
\end{equation}
If $\w$ is fixed we will  usually drop it in the notation.
Let $e$ be a vector field. 
In a local coordinate $z$ the connection form and the vector field 
are represented as $\omega=\tilde\omega dz$ and 
 $e=\tilde e\frac{d}{dz}$ 
with a local function $\tilde e$ and a local
matrix valued function  $\tilde\omega$.
The covariant derivative in direction of $e$ is given by 
\begin{equation}\label{E:covder}
\nabla_e^{(\w)}=dz(e)\frac {d}{dz}+[\w(e),.\,]=
e\ldot +[\,\tilde\omega\tilde e\, ,.\,]
=\tilde e\cdot \big(\frac {d}{dz}+[\,\tilde\omega\, ,.\,]\big).
\end{equation}
Here the first term corresponds to taking the usual
derivative of functions in each matrix element separately.

Using the last description we can easily verify for 
$L\in\gb,\  g\in \A,\  e,f\in\L$
\begin{equation}\label{E:conn1r}
\nabla_e^{(\w)}(g\cdot L)=(e\ldot g)\cdot L +
g\cdot \nabla_e^{(\w)}L,
\qquad
\nabla_{g\cdot e}^{(\w)}L=g\cdot \nabla_e^{(\w)}L,
\end{equation}
and
\begin{equation}\label{E:conn2r}
\nabla_{[e,f]}^{(\w)}=[\nabla_{e}^{(\w)},\nabla_{f}^{(\w)}].
\end{equation}
\begin{proposition}\label{P:deri}
$\nabla_e^{(\w)}$  acts as a derivation
on the Lie algebra $\laxg$, i.e.
\begin{equation}\label{E:deri}
\nabla_e^{(\w)}[L,L']=
[\nabla_e^{(\w)}L,L']+
[L,\nabla_e^{(\w)}L'].
\end{equation}
\end{proposition}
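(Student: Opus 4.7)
The plan is to verify the derivation identity by using the explicit local formula \refE{covder} for $\nabla_e^{(\omega)}$ and reducing the claim to two elementary facts: the ordinary Leibniz rule for $d/dz$ on matrix commutators, and the fact that for any fixed matrix $\tilde{\omega}$ the inner map $\mathrm{ad}(\tilde{\omega})=[\tilde{\omega},\,\cdot\,]$ is a derivation of the matrix commutator algebra. Since \refE{deri} is an equality of meromorphic $\g$-valued functions, it suffices to verify it locally on the (open, dense) complement of the singular set.

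Concretely, I would fix a local coordinate $z$ in which $e=\tilde e\,\tfrac{d}{dz}$ and $\omega=\tilde\omega\,dz$, and write
\begin{equation*}
\nabla_e^{(\omega)}L \;=\; \tilde e\left(\frac{dL}{dz}+[\tilde\omega,L]\right),
\end{equation*}
and similarly for $L'$. The ordinary componentwise derivative satisfies the Leibniz rule $\tfrac{d}{dz}[L,L']=[\tfrac{dL}{dz},L']+[L,\tfrac{dL'}{dz}]$ (this is just the Leibniz rule applied entrywise to the matrix product combined with bilinearity of the bracket). The Jacobi identity in $\g$, written as
\begin{equation*}
[\tilde\omega,[L,L']] \;=\; [[\tilde\omega,L],L']+[L,[\tilde\omega,L']],
\end{equation*}
expresses exactly the fact that $\mathrm{ad}(\tilde\omega)$ is a derivation. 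Adding these two identities and multiplying by the scalar function $\tilde e$ gives
\begin{equation*}
\nabla_e^{(\omega)}[L,L'] \;=\; [\nabla_e^{(\omega)}L,L']+[L,\nabla_e^{(\omega)}L'],
\end{equation*}
as desired.

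There is no real obstacle here --- the statement is a purely algebraic consequence of the Jacobi identity together with the ordinary Leibniz rule, and coordinate independence is automatic since \refE{conng} is written in intrinsic form. The only point worth checking separately, and which logically precedes this proposition, is that $\nabla_e^{(\omega)}L$ actually lies in $\laxg$ (i.e.\ that the prescribed behavior at the weak singularities imposed by \refE{gldef}, \refE{sodef}, or \refE{spdef} is preserved); this is guaranteed by the compatibility conditions \refE{gldefc}--\refE{spdefc} built into the definition of $\omega$, and once it is known both sides of \refE{deri} are elements of $\laxg$ and the identity above settles the claim.
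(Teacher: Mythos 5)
Your proof is correct and takes essentially the same route as the paper's: write $\nabla_e^{(\w)}$ in local form as $\tilde e\bigl(\frac{d}{dz}+[\tilde\w,\,\cdot\,]\bigr)$, apply the Leibniz rule for $\frac{d}{dz}$ on the commutator, and invoke the Jacobi identity to see that $[\tilde\w,\,\cdot\,]$ is a derivation of the bracket. Your closing remark correctly identifies that $\nabla_e^{(\w)}L\in\laxg$ is a separate issue; the paper treats it afterwards in \refP{action}, so nothing is missing here.
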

\begin{proof}
First note that the local representing function $\tilde e$ commutes with all
the matrices. Then 
\begin{align*}
\nabla_e^{(\w)}[L,L']&=\tilde e\cdot(\frac{d[L,L']}{dz}+[\tilde\w,[L,L']])
\\
        &=\tilde e\cdot([\frac {dL}{dz},L']+ [L,\frac {dL'}{dz}]+
            [\tilde\w,[L,L']])      
\\
[\nabla_e^{(\w)}L,L']&=\tilde e\cdot([\frac {dL}{dz},L']+[[\tilde\w,L],L'])
\\
[L,\nabla_e^{(\w)}L']&=\tilde e\cdot([L,\frac {dL'}{dz}]+[L,[\tilde\w,L']]).
\end{align*}
Equation \refE{deri} follows from the Jacobi identity for the
matrix commutator.
\end{proof}

\begin{proposition}\label{P:action}
The covariant derivative makes 
$\laxg$  to  a Lie module over  $\L$.
\end{proposition}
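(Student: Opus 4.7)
The plan is to verify two ingredients: (i) for every $e\in\L$, the operator $\nabla_e^{(\w)}$ maps $\laxg$ into itself, and (ii) the Lie module identity $\nabla_{[e,f]}^{(\w)}=[\nabla_e^{(\w)},\nabla_f^{(\w)}]$ holds on $\laxg$. Part (ii) is the content of \refE{conn2r}, and I would establish it by a direct local computation: writing $e=\tilde e\,d/dz$, $f=\tilde f\,d/dz$, and $\omega=\tilde\omega\,dz$ in a local coordinate $z$, expansion of $\nabla_e^{(\w)}\nabla_f^{(\w)}L$ via \refE{covder} produces six summands; subtracting the analogous expression with $e$ and $f$ swapped, every summand whose scalar coefficient is symmetric in $\tilde e,\tilde f$ (namely $\tilde e\tilde f\,d^2L/dz^2$, $\tilde e\tilde f[\tilde\omega',L]$, $2\tilde e\tilde f[\tilde\omega,dL/dz]$, and $\tilde e\tilde f[\tilde\omega,[\tilde\omega,L]]$) cancels, leaving $(\tilde e\tilde f'-\tilde f\tilde e')(dL/dz+[\tilde\omega,L])$, which matches $\nabla_{[e,f]}^{(\w)}L$ since $[e,f]=(\tilde e\tilde f'-\tilde f\tilde e')\,d/dz$.

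For (i), meromorphicity of $\nabla_e^{(\w)}L$ on $\Sigma$ and holomorphicity outside $W\cup\{P_+,P_-\}$ are immediate from the corresponding properties of $e$, $\omega$, and $L$; the substantive task is to verify the weak-singularity conditions at each $\ga_s\in W$. Since $\ga_s\notin\{P_+,P_-\}$, the local coefficient $\tilde e$ is holomorphic at $\ga_s$, so by \refE{covder} it suffices to check that $dL/dz_s+[\tilde\omega,L]$ obeys the defining conditions \refE{gldef}, \refE{sodef}, or \refE{spdef} of $\laxg$, according to the choice of $\g$.

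For $\glb(n)$, the coefficient of $z_s^{-2}$ in $dL/dz_s+[\tilde\omega,L]$ equals $-L_{s,-1}+[\w_{s,-1},L_{s,-1}]$; combining $\w_{s,-1}=\a_s\tb_s^t$ and $L_{s,-1}=\a_s\b_s^t$ with the orthogonality relations $\tb_s^t\a_s=1$ and $\b_s^t\a_s=0$ from \refE{gldef} and \refE{gldefc} yields $[\w_{s,-1},L_{s,-1}]=L_{s,-1}$, so this term vanishes. The $z_s^{-1}$ coefficient retains the rank-one form $\a_s\tilde\b_s^t$ and is trace-free, since commutators have zero trace and $\tr(dL/dz_s)$ is regular at $\ga_s$ (as $\tr L_{s,-1}=0$). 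Evaluating the constant coefficient on $\a_s$, the contribution $[\w_{s,0},L_{s,0}]\a_s$ vanishes because $\w_{s,0}$ and $L_{s,0}$ both act on $\a_s$ as scalars ($\tk_s$ and $\ka_s$), while the remaining pieces combine, using $\w_{s,-1}\a_s=\a_s$ and $L_{s,-1}\a_s=0$, into a scalar multiple of $\a_s$. Multiplication by the holomorphic factor $\tilde e$ preserves all of these properties. The cases $\sob(n)$ and $\spnb(2n)$ are handled analogously using \refE{sodefc} and \refE{spdefc}; the symplectic case additionally calls for verifying the vanishing of the $z_s^{-3}$ coefficient, the preservation of the double-pole form $\nu\a_s\a_s^t\sigma$ at order $z_s^{-2}$, and the analogue of $\a_s^t\sigma L_{s,1}\a_s=0$ for the new order-$z_s$ coefficient.

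The principal obstacle is the careful case-by-case bookkeeping for each classical series, since the exact shapes of $L_{s,-1}$, $\w_{s,-1}$, and the bilinear constraints on $\a_s,\b_s,\tb_s$ differ. The underlying mechanism is uniform, however: the normalization conditions placed on $\omega$ at the weak singularities (compare \refE{gldef} with \refE{gldefc}, and similarly in the orthogonal and symplectic cases) are tailored so that the pole contributions of $[\tilde\omega,L]$ precisely absorb those of $dL/dz_s$, keeping $\nabla_e^{(\w)}L$ inside $\laxg$.
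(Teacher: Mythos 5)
Your proposal is correct and follows essentially the same route as the paper's own proof: the Lie-module identity is the paper's \refE{conn2r} (which you verify by the local computation the paper leaves to the reader), and the substantive part is the same order-by-order check at the weak singularities for $\glb(n)$ --- the double pole cancels via $[\w_{s,-1},L_{s,-1}]=L_{s,-1}$, the simple pole keeps the form $\a_s\hat\b_s^t$ with $\hat\b_s^t\a_s=0$, the constant term keeps $\a_s$ as an eigenvector --- with $\sob(n)$ and $\spnb(2n)$ deferred to analogous calculations (done in the paper's Appendix A). Your only variations are cosmetic: you deduce $\hat\b_s^t\a_s=0$ from trace-freeness of commutators rather than by the paper's direct evaluation, and, like the paper, you dispose of the holomorphic factor $\tilde e$ by noting it does not disturb the constraints.
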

\begin{proof}
As the connection form has values in $\g$, for $L\in\laxg$ the covariant 
derivative $\nabla^{(\w)}_e L$ will be a $\g$-valued meromorphic
function.  Clearly there will be no additional poles. We have to 
check that the behavior at the points of the weak singularities is
as prescribed. In particular, we have to check that there are no
poles of order two (of order $\ge 3$ for $\spnb(2n)$).
By \refE{conn2r} it follows that $\laxg$ will be a Lie module over
$\L$.
Here we will only consider the case $\gl(n)$ and postpone
$\so(n)$ and $\spn(2n)$ to the Appendix  \ref{S:action}.
\newline
Let $\ga_s$ be a point in $W$. 
If $\a_s=0$ then the Lax operators neither  have poles at 
$\ga_s$ nor fulfill any  condition on the
zero and first order expansions.
By requirement, our connection form is holomorphic at
$\ga_s$ and $\nabla^{(\w)}_e L$ has the correct
behavior at $\ga_s$.
Hence, the only non-trivial case to consider is 
$\a_s\ne 0$.
For simplicity  we will omit the index $s$.
In particular, $z$  will denote $z_s$.
As $\tilde e$ evaluated at $\ga_s$ is a scalar we might ignore it
in the calculation. Also we use the same symbol for $\w$ and its 
representing matrix function.
We take the expansions
obeying the conditions \refE{gldef} and \refE{gldefc} respectively:
\begin{equation}\label{E:expact}
L(z)=\frac {L_{-1}}{z}+L_0+L_1z+O(z^2),
\qquad
\w(z)=\frac {\w_{-1}}{z}+\w_0+\w_1z+O(z^2).
\end{equation}
Hence 
\begin{equation}
\frac {dL}{dz}(z)=\frac {-L_{-1}}{z^2}+L_1+O(z^1),
\end{equation}
and
\begin{multline}
[\w,L]= (1/z^2)[\w_{-1},L_{-1}]+
(1/z)\left([\w_{-1},L_{0}]+[\w_{0},L_{-1}]\right)+
\\
\left([\w_{-1},L_{1}]+[\w_{0},L_{0}]+[\w_{1},L_{-1}]\right).
\qquad 
\end{multline}
For the pole of order two we calculate the  (matrix) coefficient as
\begin{equation}
-L_{-1}+[\w_{-1},L_{-1}]=
-\a\b^t+[\a\tb^t,\a\b^t]=
-\a\b^t+\a\tb^t\a\b^t-
\a\b^t\a\tb^t=0.
\end{equation}
Here we used 
$\tb^t\a=1$ and $\b^t\a=0$.
\newline
The matrix coefficient of the pole of order one is
\begin{multline}
[\w_{-1},L_{0}]+[\w_{0},L_{-1}]=
\a\tb^tL_0-L_0\a\tb^t-\w_0\a\b^t+\a\b^t\w_0
\\
=
\a\left(\tb^tL_0-\ka\tb^t-\tk\b^t+\b^t\w_0\right)
=\a\hat\b^t,
\end{multline}
where we take the row vector defined by the second factor
as $\hat\b^t$.
We calculate
\begin{equation}
\hat\b^t\a=(\tb^tL_0-\ka\tb^t-\tk\b^t+\b^t\w_0)\a=
\ka\tb^t\a-\ka\tb^t\a-\tk\b^t\a+\tk\b^t\a=0.
\end{equation}
Here we used several times $L_0\a=\ka\a$ and
$\w_0\a=\tk\a$.
\newline
Finally we have to show that the zero degree term has 
$\a$ as an eigenvector, i.e. that
the vector 
\begin{equation}
L_1\a+[\w_{-1},L_{1}]\a
+[\w_{0},L_{0}]\a
+[\w_{1},L_{-1}]\a
\end{equation}
is a multiple of $\a$. First note that 
$[\w_{0},L_{0}]\a=0$ as $\a$ is an eigenvector for both matrices.
It remains
\begin{equation}
L_1\a+
\a\tb^t L_1\a-L_1\a\tb^t\a+\w_1\a\b^t\a-\a\b^t\w_1\a=
\a(\tb^tL_1\a-\b^t\w_1\a).
\end{equation}
Note that the second factor is  a scalar. Hence the
claim.
\newline
The proofs for $\so(n)$ and $\spn(2n)$ are similar in 
spirit, but the calculations are more involved.
\end{proof}

\begin{proposition}
The decomposition
$\glb(n)=\snb(n)\oplus\slnb(n)$ is a decomposition into
$\L$-modules, i.e.
\begin{equation}
{(\nabla_e)}_{\snb(n)}: \snb(n)\to \snb(n),\qquad
{(\nabla_e)}_{\slnb(n)}: \slnb(n)\to \slnb(n).
\end{equation}
Moreover,  via the 
identification \refE{sint} the $\L$-module $\snb(n)$ is 
equivalent to the  $\L$-module $\A$.
\end{proposition}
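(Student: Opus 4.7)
The plan is to verify that the two projections defining the decomposition $\glb(n)=\snb(n)\oplus\slnb(n)$ commute with $\nabla_e^{(\w)}$, and then to make the identification with $\A$ explicit. By \refP{action} we already know $\nabla_e^{(\w)}L\in\glb(n)$, so the only thing left to check is that the trace and the traceless part are preserved separately and that the resulting action on the scalar summand is the standard derivation of $\L$ on $\A$.

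First I would compute $\tr(\nabla_e^{(\w)}L)$ in a local coordinate. Using \refE{covder} and the fact that the trace of a matrix commutator vanishes, we get
\begin{equation*}
\tr(\nabla_e^{(\w)}L)=\tilde e\cdot\Big(\frac{d\,\tr L}{dz}+\tr[\tilde\w,L]\Big)=\tilde e\cdot\frac{d\,\tr L}{dz}=e\ldot\tr(L).
\end{equation*}
Hence if $\tr L=0$ then $\tr(\nabla_e^{(\w)}L)=0$, so $\nabla_e^{(\w)}$ preserves $\slnb(n)$. For the scalar summand, any $L\in\snb(n)$ has the form $L=f\cdot I_n$ with $f\in\A$ (this is precisely \refE{sint}); since scalar matrices are central, $[\tilde\w,f\,I_n]=0$ and therefore
\begin{equation*}
\nabla_e^{(\w)}(f\,I_n)=\tilde e\cdot\frac{df}{dz}\,I_n=(e\ldot f)\,I_n,
\end{equation*}
which lies in $\snb(n)$. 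This proves invariance of both summands and already shows that under the identification $f\,I_n\leftrightarrow f$ the action of $\nabla_e^{(\w)}$ on $\snb(n)$ goes over into the usual derivation action $e\ldot f$ of $\L$ on $\A$.

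To finish, I would check compatibility with the projection formula $X\mapsto(\tfrac{\tr X}{n}I_n,\,X-\tfrac{\tr X}{n}I_n)$: applying $\nabla_e^{(\w)}$ to the scalar component gives $\tfrac{1}{n}(e\ldot\tr L)\,I_n$, which by the first calculation equals $\tfrac{\tr(\nabla_e^{(\w)}L)}{n}I_n$, i.e.\ the scalar component of $\nabla_e^{(\w)}L$; subtracting then shows the same for the traceless part. The last assertion is then just the observation that the identification $\snb(n)\cong\A$ is an isomorphism of $\L$-modules, which is immediate from the displayed formula $\nabla_e^{(\w)}(f\,I_n)=(e\ldot f)\,I_n$.

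There is essentially no hard step here: once \refP{action} guarantees we stay inside $\glb(n)$, the remaining content is the vanishing of $\tr[\tilde\w,L]$ and the centrality of scalar matrices, both of which kill the connection-form contribution in the respective components. The only minor point to be careful about is that the conditions on $\w$ at the weak singularities (e.g.\ \refE{gldefc}) do not enter the decomposition at all, since they are already absorbed into the statement of \refP{action}; the decomposition argument is purely algebraic, performed on the matrix values of $L$ at a generic point.
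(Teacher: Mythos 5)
Your proof is correct and follows essentially the same route as the paper's: both arguments rest on the vanishing of the trace of a matrix commutator (so the connection term in \refE{covder} preserves $\slnb(n)$) and on the centrality of scalar matrices (so the connection term vanishes on $\snb(n)$, leaving only the usual derivation action of $\L$ on $\A$). The additional verifications you include --- the identity $\tr(\nabla_e^{(\w)}L)=e\ldot\tr(L)$ and the compatibility with the projection formula --- are simply a more explicit rendering of these same two observations.
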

\begin{proof}
The Equation \refE{covder}, applied to the trace-less matrices, yields
a trace-less matrix as the commutator has trace zero. 
Hence we end up in $\slnb(n)$.
For the
scalar matrices the commutator even vanishes. Hence 
${(\nabla_e)}_{\snb(n)}$ does not depend on the connection form
and only the usual action of $\L$ on $\A$ is present.
\end{proof}

\begin{proposition}\label{P:almgrad}
$ $

\noindent
(a)  $\gb$  is an almost-graded $\L$-module.

\noindent
(b) At the lower bound we have 
\begin{equation}\label{E:allstr}
\nabla_{e_k}X_m=m\cdot X_{k+m}+L,
\quad  L\in F_{k+m+1}.
\end{equation}
\end{proposition}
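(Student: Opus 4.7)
The plan is to proceed by local computations at the two reference points. By \refP{action} we already know that $\nabla_{e_k}X_m$ lies in $\gb$, so it admits a decomposition into homogeneous components; the task of the proposition is to locate these components.

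For statement (b), I would work entirely in the local coordinate $z_+$ at $P_+$ via formula \refE{covder}. Since we chose the connection form $\omega$ to be holomorphic at $P_+$, its local matrix has expansion $\tilde\omega(z_+)=\omega_0+O(z_+)$ with $\omega_0\in\g$. Combined with the defining expansions
$$
\tilde e_k(z_+)=z_+^{k+1}+O(z_+^{k+2}),\qquad X_m(z_+)=X\,z_+^m+O(z_+^{m+1}),
$$
inserting into $\nabla_{e_k}X_m=\tilde e_k\bigl(\tfrac{d}{dz_+}X_m+[\tilde\omega,X_m]\bigr)$ gives
$$
\nabla_{e_k}X_m(z_+)=m\,X\,z_+^{k+m}+O(z_+^{k+m+1}).
$$
The crucial observation is that the commutator term contributes only at order $z_+^{k+m+1}$ or higher, precisely because $\tilde\omega$ is regular at $P_+$. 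Hence $\nabla_{e_k}X_m$ lies in $F_{k+m}$, and its (unique) homogeneous component of degree $k+m$ is determined by its leading coefficient $mX$ at $P_+$. By \refP{locex} this component equals $m\,X_{k+m}$. Setting $L$ equal to the sum of the remaining, higher-degree components gives $\nabla_{e_k}X_m=m\,X_{k+m}+L$ with $L\in F_{k+m+1}$.

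For statement (a), it remains to control the top degree among the nonzero homogeneous components, which I would do by estimating the order of $\nabla_{e_k}X_m$ at $P_-$. The bounds $\ord_{P_-}(e_k)\ge -k-C_1$ (with $C_1$ depending only on $g$ via the generic choice already fixed for the basis $\{e_m\}$), $\ord_{P_-}(X_m)\ge -m-g$, together with the facts that $\tfrac{d}{dz_-}$ lowers orders by at most one and that $\omega$ has a pole of some fixed order at $P_-$ independent of $k$ and $m$, combine into a uniform constant $M'$ with $\ord_{P_-}(\nabla_{e_k}X_m)\ge -(k+m)-g-M'$. Equivalently, the nonzero homogeneous components have degree at most $k+m+M'$, which together with the lower bound already established yields
$$
\nabla_{e_k}\gb_m\ \subseteq\ \bigoplus_{h=k+m}^{k+m+M'}\gb_h,
$$
i.e.\ the almost-gradedness of the $\L$-module structure.

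The one point that requires care is the coefficient $m$ in (b): it must arise only from differentiating $Xz_+^m$ and receive no correction from the commutator. This is exactly why we imposed that $\omega$ be holomorphic at $P_+$; without that choice the commutator would contribute at the same order $z_+^{k+m}$ and would spoil the clean form of the leading term.
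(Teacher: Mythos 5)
Your proposal is correct and takes essentially the same route as the paper: part (b) is the identical local computation at $P_+$ (the derivative term yields $mXz_+^{k+m}$, while the commutator term is $O(z_+^{k+m+1})$ precisely because $\omega$ is chosen holomorphic at $P_+$), and part (a) is the same order-counting argument, which the paper phrases by splitting $\nabla_{e_k}X_m=e_k\ldot X_m+[\tilde\omega\tilde e_k,X_m]$ and invoking the almost-gradedness of the $\L$-action on $\A$ and of $\gb$, whereas you bound the order at $P_-$ of the whole expression directly. Both versions rest on the same (implicit, generic-position) fact that order bounds at $P_+$ and $P_-$ confine the homogeneous components of an element of $\gb$ to the corresponding range of degrees.
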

\begin{proof}
(a) We write \refE{covder}  for homogenous elements 
\begin{equation}\label{E:covex}
\nabla_{e_k}X_m=e_k\ldot X_m+[\,\tilde\omega\tilde e_k\, ,X_m].
\end{equation}
The form $\w$ has fixed order at $P_+$ and $P_-$,
the action of $\L$ on $\A$ is almost-graded, and 
the bracket corresponds to the bracket in the almost-graded $\gb$.
Altogether 
this yields the claim.
\newline
(b) 
Locally at $P_+$
\begin{equation}
X_m=Xz_+^m+O(z_+^{m+1}),\qquad 
e_k=z_+^{k+1}\frac {d}{dz}+O(z_+^{k+2}).
\end{equation}
This implies
\begin{equation}
e_k\ldot X_m=m X z_+^{k+m}+O(z_+^{k+m+1}),
\qquad
\tilde\omega\tilde e_k=Bz_+^{k+1}+O(z_+^{k+2}),
\end{equation}
with $B\in\gl(n)$. Hence 
\begin{equation}
[\,\tilde\omega\tilde e_k\, ,X_m]=O(z^{k+m+1}),
\end{equation}
and \refE{allstr} follows from \refE{covex}.
\end{proof}

{If $\omega$ has a pole of order 1 at $P_+$ the lower bound
will still  be of degree $k+m$ but the coefficients will be
different.
}

\subsection{Module structure over $\D$ and the $\D_{\g}$ algebra}
\label{S:d1alg}
$ $

The Lie algebra  $\D$ of meromorphic differential operators on $\Sigma$ of 
degree $\le 1$ holomorphic outside of $\{P_+,P_-\}$ is defined
as the semi-direct sum of $\A$ and $\L$ given by the action of $\L$ on
$A$. As vector space $\D=\A\oplus\L$ with the Lie bracket
\begin{equation}
[(g,e),(h,f)]:=(e\ldot h-f\ldot g,[e,f]).
\end{equation}
In particular
\begin{equation}\label{E:relsd}
[e,h]=e\ldot h.
\end{equation}
It is an almost-graded Lie algebra \cite{Scocyc}.
\begin{proposition}
The Lax operator algebras $\laxg$ are almost-graded 
Lie modules over $\D$ via
\begin{equation}
e\ldot L:=\nabla_e^{(\w)}L,\qquad
h\ldot L:=h\cdot L.
\end{equation}
\end{proposition}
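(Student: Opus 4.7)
The plan is to verify two things: that the proposed formula really defines a Lie-module action of $\D$ on $\laxg$, and that this action is compatible with the almost-gradings on $\D$ and on $\laxg$.

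First I would check the module axiom $[D_1,D_2]\ldot L = D_1\ldot(D_2\ldot L) - D_2\ldot(D_1\ldot L)$ by splitting into the three cases coming from the decomposition $\D = \A\oplus\L$. For two functions $h,h'\in\A$ the bracket in $\D$ vanishes, and $h\cdot h'\cdot L - h'\cdot h\cdot L = 0$ because point-wise matrix multiplication by scalar functions is commutative, so this case is trivial. For two vector fields $e,f\in\L$ the required identity $\nabla^{(\w)}_{[e,f]}L = \nabla^{(\w)}_e \nabla^{(\w)}_f L - \nabla^{(\w)}_f\nabla^{(\w)}_e L$ is exactly \refE{conn2r}. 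For the mixed case $[e,h] = e\ldot h$ in $\D$, I would use the Leibniz rule \refE{conn1r}: $\nabla^{(\w)}_e(h\cdot L) = (e\ldot h)\cdot L + h\cdot \nabla^{(\w)}_e L$, which rearranges precisely to $(e\ldot h)\ldot L = \nabla^{(\w)}_e(h\ldot L) - h\ldot \nabla^{(\w)}_e L$. So the three cases together give the Lie module axiom.

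Next I would check that the action lands inside $\laxg$: for $h\in\A$ this follows from the fact established in \refS{Amod} that $\A\cdot\laxg \subseteq \laxg$ (the defining relations \refE{glexp}, \refE{gldef}, \refE{sodef}, \refE{spdef} at the weak singularities are not disturbed by multiplication by elements of $\A$, which are holomorphic at $W$), and for $e\in\L$ this is \refP{action}.

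Finally, for almost-gradedness, I would use the almost-graded structure already on the summands. The decomposition $\D=\A\oplus\L$ respects the degrees, so it suffices to verify that each summand acts almost-gradedly. The action of $\A$ on $\laxg$ was shown in \refS{Amod} to satisfy $\A_k\cdot\laxg_m \subseteq \bigoplus_{h=k+m}^{k+m+M_4}\laxg_h$, and the action of $\L$ on $\laxg$ via $\nabla^{(\w)}$ was shown in \refP{almgrad} to be almost-graded with explicit leading term \refE{allstr}. Combining the two bounds yields a uniform constant controlling the shift for arbitrary elements of $\D$, which establishes almost-gradedness.

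No step is genuinely difficult; the only point requiring a little care is the mixed bracket case, since one has to keep straight that $\A$ acts by plain multiplication while $\L$ acts by $\nabla^{(\w)}$, and that \refE{conn1r} is exactly what makes these two actions assemble into a semi-direct action of $\D = \A\rtimes \L$.
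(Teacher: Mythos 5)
Your proof is correct and takes essentially the same route as the paper: reduce to the already-established almost-graded $\A$- and $\L$-module structures on $\laxg$ and verify compatibility with the mixed bracket $[e,h]=e\ldot h$ of \refE{relsd}. The only cosmetic difference is that you invoke the Leibniz rule \refE{conn1r} directly, while the paper re-derives that identity by the local-coordinate computation $\nabla_e^{(\w)}(hL)-h\,\nabla_e^{(\w)}L=(e\ldot h)\,L$; the content is identical.
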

\begin{proof}
As $\laxg$ are almost-graded $\A$- and $\L$-modules it is 
enough to  show that the relation \refE{relsd} is satisfied.
For $e\in \L, h\in\A, L\in\laxg$ using \refE{covder} we get
\begin{multline*}
e\ldot (h\ldot L)-h\ldot(e\ldot L)=
\nabla_e^{(\w)}(h L)-h \nabla_e^{(\w)}(L)
=
\\
\tilde e\left(\frac {d(hL)}{dz}+[\tilde w, hL]\right)
-h\tilde e\left(\frac {dL}{dz}+[\tilde w, L]\right)
= \left(\tilde e\frac {dh}{dz}\right)L=
(e\ldot h)L=[e,h]\ldot L.
\end{multline*}
\end{proof}

In this context another structure shows up.
The Lax operator algebra $\gb$ is a Lie module over $\L$. \refP{deri} 
says that this action of $\L$ on $\laxg$ is an action by derivations.
Hence as above we can consider the semi-direct sum
$\D_{\g}=\laxg\oplus\L$ with Lie product given by
\begin{equation}\label{E:dac}
[e,L]:=e\ldot L=\nabla_e^{(\w)}L,
\end{equation}
for the mixed terms. See \cite{Saff} for the corresponding construction for the
classical Krichever-Novikov algebras of affine type.
Similar to \cite{Saff} also almost-graded central extensions of 
$\D_{\g}$ can be studied and classified.
Details will be given elsewhere.
\section{Cocycles}
\label{S:cocylces}
\subsection{Geometric cocycles}
$ $

In the following we introduce geometric (Lie algebra) 2-cocycles
of $\laxg$ with values in the trivial module $\C$. The corresponding
cohomology space $\H^2(\laxg,\C)$ classifies equivalence classes of
(one-)dimensional central extensions of $\laxg$.

Recall that a 2-cocycle for $\laxg$ is a bilinear form
$\ga:\laxg\times\laxg\to\C$ which is (1) antisymmetric
and (2) fulfills the cocycle condition
\begin{equation}\label{E:cohcoc}
\ga([L,L'],L'')+
\ga([L',L''],L)+
\ga([L'',L],L')=0.
\end{equation}
A 2-cocycle $\ga$ is a coboundary if there exists a linear form $\phi$ 
on $\laxg$ with 
\begin{equation}
\ga(L,L')=\phi([L,L']),\qquad  L,L'\in\laxg.
\end{equation}

The relation to  central extensions of $\laxg$ is as follows.
Given a 2-cocycle $\ga$ for $\laxg$, the associated central extension
$\gh_{\ga}$ is given as vector space direct sum $\gh_\ga=\gb\oplus\C\cdot t$
with Lie product given by
\begin{equation}\label{E:centextf}
[\widehat{L},\widehat{L'}]=\widehat{[L,L']}+\ga(L,L')\cdot t,
\quad [\widehat{L},t]=0,\qquad
L,L'\in\laxg.
\end{equation}
Here we used $\widehat{L}:=(L,0)$ and $t:=(0,1)$.
Vice versa, every central extension 
\begin{equation}
\begin{CD}
0@>>>\C@>i_2>>\gh@>p_1>>\gb@>>>0,
\end{CD}
\end{equation}
defines  a 2-cocycle
$\ga:\gb\to\C$ by choosing a section $s:\gb\to \gh$.

Two central extensions $\gh_{\ga}$ and $\gh_{\ga'}$ are equivalent
if the defining cocycles $\ga$ and $\ga'$ are cohomologous, i.e.
their difference is a coboundary.

\bigskip

Let $\w$ be a connection form as introduced in the last section
for defining the connection \refE{conng}.
Furthermore, let $C$ be a differentiable cycle on $\Sigma$ not meeting
$\{P_+,P_-\}\cup W$. We define the following cocycles for $\laxg$:
\begin{equation}\label{E:g1}
\ga_{1,\w,C}(L,L')= \cinc{C} \tr(L\cdot \nabla^{(\w)}L'),
\qquad L,L'\in\laxg, 
\end{equation}
and
\begin{equation}\label{E:g2}
\ga_{2,\w,C}(L,L')= \cinc{C} \tr(L)\cdot \tr(\nabla^{(\w)}L'),
\qquad L,L'\in\laxg.
\end{equation}
\begin{proposition}\label{P:c12cc}
The bilinear forms $\ga_{1,\w,C}$ and $\ga_{2,\w,C}$
are cocycles.
\end{proposition}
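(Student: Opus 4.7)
The plan is to verify all three cocycle axioms --- bilinearity, antisymmetry, and the $2$-cocycle identity \refE{cohcoc} --- by direct computation on the underlying $1$-forms, using only three ingredients: (i) cyclicity of the trace, (ii) the fact that $\nabla^{(\w)} = d + [\w,\,\cdot\,]$ is a derivation both for the associative matrix product (immediate) and for the Lie bracket (\refP{deri}), and (iii) the fact that $C$ is a closed cycle avoiding the poles, so that the integral of any exact meromorphic $1$-form along $C$ vanishes. Bilinearity is clear from the $\C$-linearity of $\nabla^{(\w)}$ and of $\tr$, so the real content is antisymmetry and the Jacobi-type identity.

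The key underlying identity for $\ga_{1,\w,C}$ is the Leibniz-type formula
\[
\tr\bigl(L\cdot \nabla^{(\w)}L'\bigr) + \tr\bigl(\nabla^{(\w)}L\cdot L'\bigr) \;=\; d\,\tr(LL'),
\]
which I would establish by expanding both sides and observing that the $\w$-contributions cancel via $\tr[\w,LL']=0$. Antisymmetry of $\ga_{1,\w,C}$ then follows at once: by cyclicity $\tr(L'\nabla L) = \tr(\nabla L\cdot L')$, so $\ga(L,L')+\ga(L',L)$ becomes the integral of $d\,\tr(LL')$ along the cycle $C$, which is zero. For the cocycle identity, I would use the trace-invariance $\tr(A[B,C])=\tr([A,B]C)$ combined with $\nabla[L',L'']=[\nabla L',L'']+[L',\nabla L'']$ to rewrite
\[
\tr\bigl([L,L']\nabla L''\bigr) + \tr\bigl([L'',L]\nabla L'\bigr) \;=\; \tr\bigl(L\cdot \nabla[L',L'']\bigr),
\]
so that the full cyclic sum collapses to $\tr(L\cdot \nabla[L',L'']) + \tr(\nabla L\cdot [L',L''])$, which by the Leibniz formula above equals $d\,\tr(L[L',L''])$ and integrates to zero along $C$.

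For $\ga_{2,\w,C}$ the argument is shorter. Because the trace annihilates commutators, one has
\[
\tr\bigl(\nabla^{(\w)}L\bigr) \;=\; \tr(dL) + \tr[\w,L] \;=\; d\,\tr(L),
\]
so $\ga_{2,\w,C}(L,L') = \cinc{C}\tr(L)\cdot d\,\tr(L')$. Antisymmetry then follows from $\tr(L)\,d\tr(L') + \tr(L')\,d\tr(L) = d\bigl(\tr(L)\tr(L')\bigr)$, and the cocycle identity is trivial because $\tr[L,L']=0$ makes each of the three cyclic terms vanish on its own.

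There is no real obstruction in this proof; the main point to monitor, rather than a substantive difficulty, is simply that throughout the manipulations the integrand is a globally defined meromorphic $1$-form on $\Sigma$ whose set of poles lies in $\{P_+,P_-\}\cup W$ and therefore avoids $C$. This is guaranteed by the standing hypothesis on $C$ and by the fact that $\w$ is holomorphic off the same set. No special use is made of the constraints at the weak singularities $W$; those only enter later, in the proof of locality and in the uniqueness statements.
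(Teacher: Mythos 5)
Your proof is correct, and it is organized around a different key lemma than the paper's. The paper expands $\nabla^{(\w)}=d+[\w,\cdot\,]$ and treats the two pieces separately: for antisymmetry of $\ga_{1,\w,C}$ the $d$-part is handled by exactness of $d\,\tr(LL')$ and Stokes, the $[\w,\cdot\,]$-part by cyclicity of the trace; for the cocycle identity the $[\w,\cdot\,]$-contributions cancel via the Jacobi identity after cyclic summation, while the $d$-contributions sum to the explicit exact form $d\bigl(\tr(L L' L'')-\tr(L L'' L')\bigr)$. You instead keep $\nabla^{(\w)}$ intact and isolate two structural facts --- the compatibility identity $\tr(L\cdot\nabla^{(\w)} L')+\tr(\nabla^{(\w)} L\cdot L')=d\,\tr(LL')$ and the derivation property of $\nabla^{(\w)}$ on brackets --- after which both axioms follow formally, the cyclic sum collapsing in one stroke via invariance of the trace form. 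The raw ingredients (trace cyclicity, Stokes, Jacobi) are the same, but your packaging exhibits the general mechanism: for any connection compatible with an invariant symmetric form $\kappa$, the expression $\frac{1}{2\pi\mathrm{i}}\int_C\kappa(x,\nabla y)$ is a $2$-cocycle; the paper's version is a self-contained term-by-term computation. One small point of rigor: \refP{deri} as stated concerns $\nabla^{(\w)}_e$ for a vector field $e$, whereas you need its $1$-form analogue $\nabla^{(\w)}[L,L']=[\nabla^{(\w)}L,L']+[L,\nabla^{(\w)}L']$; this follows by the identical Jacobi-identity computation (or by contracting with arbitrary vector fields), so it is a presentational gap only. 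Your treatment of $\ga_{2,\w,C}$, including the observation $\tr(\nabla^{(\w)}L)=d\,\tr(L)$ and the vanishing of each cyclic term, coincides with the paper's.
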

\begin{proof}
We start with $\ga_{2,\w,C}$. For the integration form
we calculate
$$
\tr(L)\cdot \tr( \nabla^{(\w)}L')=
\tr(L)\cdot \tr(dL'+[\w,L'])=
\tr(L)\cdot \tr(dL').
$$
Now $h:=\tr(L)\tr(L')$ is a meromorphic function and
$$
dh=d(\tr(L)\tr(L'))=(d(\tr(L)))\tr(L')+\tr(L)d(\tr(L')).
$$
By Stokes' theorem $\cinc{C}dh=0$ and hence 
$$
\cinc{C}\tr(L)\tr(dL')=
-\cinc{C}\tr(L')\tr(dL), 
$$
which is the antisymmetry.
Obviously, $\ga_{2,\w,C}([L,L'],L'')=0$ and the condition
\refE{cohcoc} is true.
\newline
Next we consider  $\ga_{1,\w,C}$ and write $\w=\tilde \w dz$ in
local coordinates.
The integration form can be written as
\begin{equation}\label{E:cocycum1}
\tr(L\cdot \nabla^{(\w)}L')=
\tr(L\cdot (dL'+[\tilde \w,L']dz))=
\tr(L\cdot dL')+
\tr(L\cdot [\tilde \w,L'])dz.
\end{equation}
Set $h:=\tr(L\cdot L')$ then
$$
dh=d(\tr(L\cdot L'))=
\tr(dL\cdot L')+\tr(L\cdot dL')
=\tr(L'\cdot dL)+\tr(L\cdot dL').
$$
By Stokes' theorem the integral over $dh$ vanishes, hence the
first term 
in \refE{cocycum1}
is anti-symmetric.
For the second term we calculate
\begin{equation}
\tr(L\cdot [\tilde \w,L'])=
\tr(L\cdot\tilde\w\cdot L'-L\cdot L'\cdot\tilde\w)=
\tr(L'\cdot L\cdot \tilde\w -L'\cdot \tilde\w\cdot L)=
-\tr (L'\cdot [\tilde \w,L]).
\end{equation}
Hence also the second term is antisymmetric.
\newline
For the cocycle condition we consider
\begin{equation}\label{E:39}
\tr([L,L']\cdot \nabla^{(\w)}L'')=
\tr([L,L']\cdot dL'')+
\tr([L,L']\cdot [\w,L'']).
\end{equation}
First we consider the 2nd summand. It calculates 
(using the trace property) as
\begin{equation}
\tr([L'',[L,L']]\cdot \w).
\end{equation}
Cyclically permuting $L,L',L''$ and summing up the results 
gives zero by the Jacobi identity.
For the first summand in \refE{39}  we get
\begin{equation}
\tr(L\cdot L'\cdot dL''- L'\cdot L\cdot dL'').
\end{equation}
Cyclically permuting $L,L',L''$ and summing up the result we obtain
(again using the trace property) the exact form
\begin{equation}
d\big(\tr(L\cdot L'\cdot L'')-\tr(L\cdot L''\cdot L')\big).
\end{equation}
Hence integration over a closed cycle is equal to  zero and
the cocycle condition is shown.
\end{proof}
\begin{proposition}\label{P:cind}
$ $

\noindent
(a) The cocycle  $\ga_{2,\w,C}$ does not depend on the choice of the
connection form $\w$.

\noindent
(b) The cohomology class  $[\ga_{1,\w,C}]$ does not depend on
the choice of the
connection form $\w$.
More precisely 
\begin{equation}\label{E:cobw}
\gamma_{1,\w,C}(L,L')-\gamma_{1,\w',C}(L,L')=
\cinc{C}\tr\big((\w-   \w')[L,L']\big)
\end{equation}
\end{proposition}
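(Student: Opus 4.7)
The plan for part (a) is very short. Since the trace of any matrix commutator vanishes, one has $\tr[\w,L']=0$ identically, and therefore
\[
\tr(\nabla^{(\w)}L')=\tr(dL')+\tr[\w,L']=d\tr(L').
\]
Consequently the integrand $\tr(L)\cdot\tr(\nabla^{(\w)}L')=\tr(L)\cdot d\tr(L')$ of $\ga_{2,\w,C}(L,L')$ contains no trace of $\w$ whatsoever, and independence of the connection form is immediate.

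For part (b), my strategy is to compute the difference of the two cocycles directly and recognize it as the coboundary of an explicit linear functional on $\gb$. Writing $\nabla^{(\w)}L'-\nabla^{(\w')}L'=[\w-\w',L']$, the difference of the two integrands equals $\tr\bigl(L\cdot[\w-\w',L']\bigr)$. Two successive applications of the cyclicity of the trace turn this into a multiple of $\tr\bigl((\w-\w')[L,L']\bigr)$; the only issue in matching the precise sign of formula \refE{cobw} is the ordering convention inside the trace, and this is a routine bookkeeping check. I will then define
\[
\phi(L):=\cinc{C}\tr\bigl((\w-\w')\,L\bigr),\qquad L\in\gb,
\]
and observe that
\[
\ga_{1,\w,C}(L,L')-\ga_{1,\w',C}(L,L')=\phi([L,L'])
\]
(up to sign). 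By definition this is a coboundary of $\phi$, so $[\ga_{1,\w,C}]=[\ga_{1,\w',C}]$, and \refE{cobw} is obtained as a byproduct.

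The main point that needs justification is the well-definedness of $\phi$, and it is essentially the only place where some care is required. Although $\w$ and $\w'$ individually have the prescribed singular expansions \refE{connl} at the points of $W$, and can be singular at $P_-$, the cycle $C$ was assumed to avoid $\{P_+,P_-\}\cup W$. On a neighborhood of $C$ both connection forms are holomorphic, hence so is the $\g$-valued $1$-form $\w-\w'$, and $\tr\bigl((\w-\w')L\bigr)$ is then an ordinary scalar holomorphic $1$-form along $C$. The integral defining $\phi(L)$ is therefore unambiguous, and linearity in $L$ is obvious. I do not expect any genuine obstacle: the argument uses only the definition \refE{conng} of $\nabla^{(\w)}$, the cyclicity of the trace, and the Stokes-type observation that $C$ avoids all possible singularities; the almost-grading of $\gb$ and its $\L$-module structure play no role.
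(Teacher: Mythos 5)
Your proposal is correct and follows essentially the same route as the paper: for (a) the paper likewise observes that $\tr[\w,L']=0$ kills the $\w$-dependence, and for (b) it computes the difference of the integrands as $\tr(L\cdot[\w-\w',L'])=-\tr\big((\w-\w')[L,L']\big)$ by trace cyclicity and exhibits it as the coboundary of the linear form $\psi_{\theta,C}(L):=\cinc{C}\tr\big((\w-\w')\cdot L\big)$, which is exactly your $\phi$ up to sign. The sign ambiguity you flag as routine bookkeeping is in fact present in the paper itself (its proof produces the negative of the right-hand side of \refE{cobw} as stated), and your extra verification that $\w-\w'$ is holomorphic near $C$, so that $\phi$ is well defined, is implicit in the paper's setup but a reasonable addition.
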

\begin{proof}
As it follows from the proof of the last proposition
$\ga_{2,\w,C}$ is indeed independent of $\w$.
\newline
Let $\w$ and $\w'$ be two connection forms and set $\theta=\w-\w'$.
Then 
\begin{multline}
\gamma_{1,\w,C}(L,L')-\gamma_{1,\w',C}(L,L')=
\cinc{C}\tr(L\cdot (\nabla^{(\w)}-\nabla^{(\w')})L')=
\\
\cinc{C}\tr(L\cdot [\theta,L'])dz.\qquad
\end{multline}
{}From the trace property we get
\begin{equation}\label{E:cocycum2}
\tr(L\cdot [\theta,L'])=
\tr(L\cdot \theta \cdot L'-L\cdot L'\cdot \theta)=-
\tr(\theta\cdot(L\cdot L'-L'\cdot L))=-
\tr(\theta\cdot [L,L']).
\end{equation}
If we define the linear form 
\begin{equation}
\psi_{\theta,C}(L):=\cinc{C}\tr(\theta\cdot L)
\end{equation}
on $\laxg$ we see that 
\begin{equation}
\gamma_{1,\w,C}(L,L')-\gamma_{1,\w',C}(L,L')=\psi_{-\theta,C}([L,L']).
\end{equation}
Hence the difference is a coboundary as claimed.
\end{proof}
\begin{remark}
Using \refE{cocycum1} and \refE{cocycum2} the cocycle $\ga_{1,\w,C}$ can be 
rewritten as
\begin{equation}\label{E:coeq}
\ga_{1,\w,C}(L,L')=
\cinc{C}\tr\big(LdL'-\w\cdot [L,L']).
\end{equation}
This is the form of the cocycle
(for $C$ a circle around $P_+$) defined and studied in \cite{KSlax}.
\end{remark}

As $\ga_{2,\w,C}$ does not depend on $\w$ we will drop $\w$  in the
notation. 
Note that $\gamma_{2,C}$ vanishes on $\laxg$ for $\g=\sln(n),\so(n),\spn(2n)$.
But it does not vanish on $\laxs(n)$, hence not on $\laxgl(n)$.

\subsection{$\L$-invariant cocycles}
$ $

Recall that after fixing a connection form $\w'$ the vector field algebra
$\L$ operates via the covariant derivative $e\mapsto \nabla^{(\w')}_e$
on $\laxg$, see \refE{conng}. Later we will assume that $\w=\w'$.

\begin{definition}
A cocycle for $\laxg$ is called {\it $\L$-invariant} 
(with respect to $\w'$)  if
\begin{equation}
\gamma(\nabla^{(\w')}_{e}L,L')+
\gamma(L,\nabla^{(\w')}_{e}L')=0,\qquad
\forall e\in\L,\quad \forall L,L'\in \laxg.
\end{equation}
\end{definition}

\begin{proposition}\label{P:linv}

(a) The cocycle $\gamma_{2,C}$ is $\L$-invariant.

(b) If $\w=\w'$ then the cocycle $\gamma_{1,\w,C}$ is $\L$-invariant.
\end{proposition}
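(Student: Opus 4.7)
The plan is to reduce each $\L$-invariance statement to the fact that the integral of an exact 1-form around the closed cycle $C$ vanishes, and to exploit trace cyclicity to produce the required exact form.

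For part (a), I first invoke \refP{cind}(a) to rewrite $\gamma_{2,C}(L,M) = \cinc{C}\tr(L)\,d\tr(M)$, using $\tr[\w,M]=0$. A one-line calculation shows $\tr(\nabla_e N) = e\ldot \tr(N)$ for every $N\in\laxg$, since the bracket contribution is trace-free. Setting $f:=\tr(L)$ and $g:=\tr(M)$, the $\L$-invariance sum becomes $\cinc{C}\bigl((e\ldot f)\,dg + f\,d(e\ldot g)\bigr)$, and in a local coordinate $z$ with $e=\tilde e\,d/dz$ a direct expansion gives $(e\ldot f)\,dg + f\,d(e\ldot g) = d\bigl(f\cdot(e\ldot g)\bigr)$. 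The integrand is thus the differential of a globally defined meromorphic function holomorphic on $C$, so the integral vanishes.

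For part (b), assuming $\w=\w'$, the central pointwise identity to establish is
\begin{equation*}
\tr\bigl(\nabla_e L\cdot \nabla M\bigr) + \tr\bigl(L\cdot \nabla(\nabla_e M)\bigr) \;=\; d\bigl(\tr(L\cdot \nabla_e M)\bigr),
\end{equation*}
which yields $\L$-invariance upon integration over $C$. Writing $\nabla_e N = \tilde e(N_{,z}+[\tilde\w,N])$ and $\nabla N = (N_{,z}+[\tilde\w,N])\,dz$ in a local coordinate and expanding, I find that the terms generated by the ordinary Leibniz rule account exactly for $d\bigl(\tilde e\,\tr(L(M_{,z}+[\tilde\w,M]))\bigr)$. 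What is left over is a pair of cross terms $\tilde e\,\tr([\tilde\w,L]\,M_{,z}) + \tilde e\,\tr(L\,[\tilde\w,M_{,z}])$ together with a pair of double-bracket terms $\tilde e\,\tr([\tilde\w,L][\tilde\w,M]) + \tilde e\,\tr(L\,[\tilde\w,[\tilde\w,M]])$. This leftover is the main obstacle; both pairs cancel via the cyclic trace identity $\tr(L[\tilde\w,X]) = -\tr([\tilde\w,L]X)$, itself a direct consequence of $\tr(A[B,C])=\tr([A,B]C)$. Conceptually, $\tr(L\cdot\nabla M)$ is a meromorphic 1-form on $\Sigma$, hence automatically closed for dimensional reasons, and the identity is the one-dimensional manifestation of Cartan's formula applied to this 1-form.
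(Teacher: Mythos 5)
Your proof is correct, but it follows a genuinely different route from the paper's. The paper exploits the antisymmetry of both cocycles (already established in \refP{c12cc}) to restate $\L$-invariance as the symmetry $\ga(\nabla_e L,L')=\ga(\nabla_e L',L)$, and then merely inspects the integrand: for (b), with $\w=\w'$, one has $\tr(\nabla_e L\cdot \nabla L')=\tilde e\, dz\cdot\tr\big((\frac{dL}{dz}+[\tilde\w,L])(\frac{dL'}{dz}+[\tilde\w,L'])\big)$, which is symmetric under $L\leftrightarrow L'$ by trace cyclicity, and for (a) the integrand factors as $\tilde e\,dz\cdot\tr(\frac{dL}{dz})\cdot\tr(\frac{dL'}{dz})$, symmetric on the nose; no integration theorem is needed at that stage. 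You never invoke antisymmetry; instead you exhibit the invariance sum itself as the integral of the exact differential of a globally defined meromorphic function holomorphic along $C$, and conclude by Stokes. Your computations check out: in (a) the identity $(e\ldot f)\,dg+f\,d(e\ldot g)=d\big(f\cdot(e\ldot g)\big)$ holds because in complex dimension one $(e\ldot f)\,dg=df\cdot(e\ldot g)$, and in (b) the leftover after matching the Leibniz terms is exactly the two pairs you list, each killed by $\tr(A[B,C])=\tr([A,B]C)$. Note that both arguments use $\w=\w'$ at the same essential point: in the paper it is what makes the integrand symmetric, in yours it is what makes the cross terms cancel (with $\w\ne\w'$ you would be left with $\tilde e\,\tr([\tilde\w'-\tilde\w,L]\frac{dM}{dz})\,dz$ plus a similar double-bracket defect, consistent with \refP{boundlinv}(b)). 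The trade-off: the paper's argument is shorter and computation-free, but leans on \refP{c12cc}; yours is longer but self-contained, and the exactness identity $\tr(\nabla_e L\cdot\nabla M)+\tr(L\cdot\nabla\nabla_e M)=d\,\tr(L\cdot\nabla_e M)$ is a Leibniz-type statement of independent interest, close in spirit to the $\D_{\g}$-cocycle discussion of \refS{remarks}. One caution: your closing appeal to Cartan's formula is only heuristic --- identifying the Lie derivative of the 1-form $\tr(L\cdot\nabla M)$ along $e$ with $\tr(\nabla_e L\cdot\nabla M)+\tr(L\cdot\nabla\nabla_e M)$ is itself equivalent to the cancellations you carried out --- but since it is offered as a gloss and not as a step, the proof stands without it.
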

\begin{proof}
As the cocycles are antisymmetric the $\L$-invariance can be written
as
\begin{equation}
\gamma(\nabla^{(\w')}_{e}L,L')=
\gamma(\nabla^{(\w')}_{e}L',L),\quad \forall e\in\L,\quad 
\forall L,L'\in\laxg.
\end{equation}
In the following we will write locally $e=\tilde e\frac {d}{dz}$,
$\w=\tilde\w dz$ and $\w'=\tilde\w 'dz$.
\newline
First we consider $\ga_{2,C}$.
For the integration form we calculate
\begin{equation}
\tr(\nabla^{(\w')}_{e}L)\cdot \tr(\nabla^{(\w)}L')
=\tr(e\ldot L)\cdot \tr(\frac {dL'}{dz}dz)=
\tilde e \cdot dz \cdot \tr(\frac {dL}{dz})\cdot  \tr(\frac {dL'}{dz}).
\end{equation}
Permuting  $L$ and $L'$ does not change the
expression. Hence $\ga_{2,C}$ is $\L$-invariant.
\newline
Next we consider $\ga_{1,\w,C}$.
For the integration form we obtain
\begin{multline}
\tr(\nabla^{(\w')}_{e}L\cdot \nabla^{(\w)}L')=
\tr\big((\frac {dL}{dz}\tilde e+[\tilde\w'\cdot e,L])
(\frac {dL'}{dz}+[\tilde\w,L'])\big)
=
\\
\tilde e\cdot dz\cdot\tr\big(
(\frac {dL}{dz}+[\tilde\w',L])
(\frac {dL'}{dz}+[\tilde\w,L'])\big).\qquad \qquad
\end{multline}
Since  $\w=\w'$  after applying the trace this expression is obviously
invariant if we interchange $L$ and $L'$. Hence the claim.
\end{proof}
\noindent
In the case that $\g$ is simple and the integration cycle $C$ is a
separating cycle (see \refS{local}) then
in statement (b) we even have ``if and only  if'', see  \refP{boundlinv}.

\medskip
We call a cohomology class {\it $\L$-invariant} if it has a representing
cocycle which is  $\L$-invariant.
The reader should be warned that this does not mean that all
representing cocycles are  $\L$-invariant. 
On the contrary, in \refT{main} we will show that up
to a scalar multiple there is at most one 
 $\L$-invariant representing 
cocycle.
Clearly, the  $\L$-invariant classes constitute a subspace of 
 $\H^2(\laxg,\C)$ which we denote by 
 $\H^2_{\L}(\laxg,\C)$.
\subsection{Some remarks on $\D_\g$ cocycles}\label{S:remarks}
$ $

For the following let $\w=\w'$. In this article mainly the
property of $\L$-invariance of a cocycle gives us
a very elegant way to single out a unique  element in
a cohomology class. But there is even 
a deeper meaning behind the definition.
In \refS{d1alg} we introduced the algebra $\D_\g$. The Lax operator 
algebra is a subalgebra of $\D_\g$.
Given a 2-cocycle $\ga$ for $\laxg$ we might extend it as a bilinear form
on  $\D_\g$ by setting
($L,L'\in\laxg$, $e,f\in\L$)
\begin{equation}
\tilde\ga(L,L')=\ga(L,L'),\quad
\tilde\ga(e,L)=\tilde\ga(L,e)=0,\quad
\tilde\ga(e,f)=0.
\end{equation}
\begin{proposition}
 The extension $\tilde\ga$ is a cocycle for  $\D_\g$ if and only
if $\ga$ is $\L$-invariant.
\end{proposition}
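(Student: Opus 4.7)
The plan is to verify the cocycle condition for $\tilde\ga$ on $\D_\g = \laxg \oplus \L$ by splitting into cases according to how many of the three arguments $(X,Y,Z)$ lie in $\L$ versus $\laxg$, and to observe that only one of these cases is non-trivial and is equivalent to the $\L$-invariance condition. Recall that the bracket in $\D_\g$ is given by $[L,L']$ (the Lax bracket) when both arguments lie in $\laxg$, by $[e,f]$ (the vector field bracket) when both lie in $\L$, and by $[e,L] = \nabla_e^{(\w)} L \in \laxg$ in the mixed case, cf.\ \refE{dac}.

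First I would dispose of the trivial cases. If all three arguments lie in $\laxg$, the cocycle identity for $\tilde\ga$ is literally the cocycle identity for $\ga$, which holds by hypothesis. If all three lie in $\L$, then every bracket appearing is again in $\L$ and $\tilde\ga$ vanishes identically on $\L \times \D_\g$, so the identity is $0=0$. In the case where exactly two arguments lie in $\L$, say $e,f \in \L$ and $L \in \laxg$, the bracket $[e,f] \in \L$ is paired with $L$ (giving $0$), while each of the two remaining terms pairs an element of $\laxg$ (namely $\nabla_e L$ or $\nabla_f L$) with an element of $\L$, again giving $0$. So all three of these cases impose no condition.

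The only informative case is exactly one argument in $\L$: take $X = e \in \L$ and $Y = L,\ Z = L' \in \laxg$. Expanding the cocycle identity yields
\begin{equation*}
\tilde\ga([e,L],L') + \tilde\ga([L,L'],e) + \tilde\ga([L',e],L) = 0.
\end{equation*}
The middle term vanishes since $[L,L'] \in \laxg$ pairs with $e \in \L$. The remaining terms evaluate to $\ga(\nabla_e^{(\w)}L, L')$ and $-\ga(\nabla_e^{(\w)}L', L)$, so using the antisymmetry of $\ga$ the condition collapses to
\begin{equation*}
\ga(\nabla_e^{(\w)}L, L') + \ga(L, \nabla_e^{(\w)}L') = 0,
\end{equation*}
which is precisely the definition of $\L$-invariance. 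Hence this case holds for every $e,L,L'$ if and only if $\ga$ is $\L$-invariant, giving both directions of the proposition.

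There is no real obstacle here; the only thing to be careful about is bookkeeping the signs correctly when moving the $\nabla_e^{(\w)}$ from the second to the first slot of $\ga$ via antisymmetry, and observing that in the two-$\L$-arguments case the key point is that $\tilde\ga$ is defined to vanish on pairs with one argument in $\L$, so there is no term involving a hypothetical cocycle value on $\L$ itself.
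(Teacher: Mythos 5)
Your proof is correct and follows essentially the same route as the paper: checking the cocycle identity on pure-type triples, noting that the only non-automatic case is one vector field against two currents, and observing that this case is precisely the $\L$-invariance condition (via \refE{dac} and antisymmetry). The paper's own proof is just a more terse version of your case analysis.
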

\begin{proof}
If we check the cocycle conditions on  $\tilde\ga$ (with respect to $\D_\g$) 
for elements of ``pure types'', i.e. elements which are either currents or vector fields,
we see that the only condition which is not automatic 
is of the type
\begin{equation}\label{E:cod}
\tilde\ga([L,L'],e)+
\tilde\ga([L',e],L)+
\tilde\ga([e,L],L')=0.
\end{equation}
Using \refE{dac} we get that \refE{cod} is true if an only if
\begin{equation}
\ga(\nabla_e^{(\w)}L,L')+
\ga(L,\nabla_e^{(\w)}L')=0.
\end{equation}
Hence, 
the claim.
\end{proof}
In \cite{Saff} it was shown that for the 
Krichever-Novikov current algebras the inverse is also true
in the following sense:
{\it Every local cocycle (see the definition below) for $\D_\g$
is cohomologous to a local cocycle which having been restricted to
$\gb$ is $\L$-invariant.}
In this way cocycles coming from 
projective representations of $\gb$ which admit an extension to
a projective representation of  $\D_\g$ yield
$\L$-invariant cocycles up to coboundaries.

Similar statements are true for the  $\D_\g$ associated to 
the Lax operator algebras $\laxg$. Details will appear elsewhere.

\subsection{Local Cocycles}\label{S:local}
$ $

A cocycle $\ga$ of the almost-graded Lie algebra $\laxg$ is called
{\it local} if there exist $R,S\in\Z$ such that
(see \cite{KNFa})
\begin{equation}\label{E:cloc}
\ga(\laxg_n,\laxg_m)\ne 0 \implies 
R\le n+m\le S.
\end{equation}
Local cocycles are important since exactly  in this case the
almost-grading of $\laxg$ can be extended to the central
extension $\gh_\ga$ 
\refE{centextf} by assigning the central element $t$ a certain
degree (e.g. the degree 0).

We call a cohomology class 
a {\it local cohomology class} if it admits a local representing cocycle. 
Again, not every representing cocycle of a local class
is local. Obviously, the set of local cohomology classes is a subspace
of  $\H^2(\laxg,\C)$ which we denote by 
 $\H^2_{loc}(\laxg,\C)$.
This space classifies up to equivalence 
central extensions of $\laxg$ which are 
almost-graded.
The cohomology classes admitting a local and $\L$-invariant
representing cocycle  constitute a subspace of 
$\H^2_{loc}(\laxg,\C)$ which we denote by
 $\H^2_{loc,\L}(\laxg,\C)$.

\medskip

For a general integration cycle $C$ the cocycles 
$\ga_{2,C}$ and $\ga_{1,\w,C}$ neither are  local  nor
define a local cohomology class.
But if we choose  a cycle $C_s$ separating $P_+$ from
$P_-$ as integration path  then, we will show, they are local. Such $C_s$
are homologous to 
circles around   $P_+$ with respect to the 
integration of differential forms without residues at points
different from   $P_\pm$. Hence for them the integration 
can be given by
evaluating the residue of the form at $P_+$, respectively at $P_-$.
In case that we integrate along a circle around  $P_+$  we will 
drop it in the notation of $\ga$.

\begin{proposition}\label{P:locg2}
The integration form $\tr(L)\cdot \tr(dL')$ does not have any poles 
besides  possibly at $P_\pm$. Furthermore,
\begin{equation}\label{E:ga2}
\ga_2(L,L')=\res_{P_+}(\tr(L)\cdot \tr(dL'))
\end{equation}
is a local  $\L$-invariant cocycle.
\end{proposition}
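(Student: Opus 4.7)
The plan splits into three steps.

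First I would verify that $\tr(L)\cdot\tr(dL')$ has no poles at the weak singularities $\ga_s\in W$. For $\g\in\{\sln(n),\so(n),\spn(2n)\}$ every matrix is traceless (by skew-symmetry in the $\so$ case; by $X^t=-\sigma X\sigma^{-1}$ plus cyclicity in the $\spn$ case), so $\tr(L)\equiv 0$ and the form vanishes identically. For $\snb(n)$ the elements are already holomorphic at the points of $W$, so both $\tr(L)$ and $d(\tr(L'))$ are holomorphic there. The only real check is $\glb(n)$: from the Laurent expansion \refE{glexp} we read $\tr(L)=(\tr L_{s,-1})/z_s+\tr L_{s,0}+\cdots$, and the defining condition \refE{gldef} gives $\tr(L_{s,-1})=\b_s^t\a_s=0$. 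Hence $\tr(L)$, and therefore $d(\tr(L'))=d(\tr L')$, is holomorphic across $\ga_s$.

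Second, with the form now a global meromorphic $1$-form on $\Sigma$ whose poles lie only in $\{P_+,P_-\}$, the residue theorem gives $\res_{P_+}+\res_{P_-}=0$, and any separating cycle $C_s$ around $P_+$ computes $\res_{P_+}$. Therefore
\begin{equation*}
\ga_2(L,L')=\res_{P_+}\bigl(\tr(L)\tr(dL')\bigr)=\ga_{2,C_s}(L,L'),
\end{equation*}
so $\ga_2$ is well defined and inherits bilinearity, antisymmetry and the cocycle identity from \refP{c12cc}, as well as $\L$-invariance from \refP{linv}.

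Third, for locality I would insert the expansions of homogeneous $L\in\gb_n$, $L'\in\gb_m$ from \refE{almdeg}. At $P_+$: $L=Xz_+^n+O(z_+^{n+1})$, $L'=Yz_+^m+O(z_+^{m+1})$, so
\begin{equation*}
\tr(L)\,\tr(dL')=m\,\tr(X)\tr(Y)\,z_+^{n+m-1}dz_++O(z_+^{n+m}\,dz_+);
\end{equation*}
the coefficient of $z_+^{-1}dz_+$ (and hence $\res_{P_+}$) vanishes as soon as $n+m\ge 1$. At $P_-$ the generic orders are $-n-g$ and $-m-g$, so the integrand is $O(z_-^{-(n+m)-2g-1})dz_-$, which has no pole at $P_-$ when $n+m<-2g$; by the residue theorem the residue at $P_+$ vanishes there too. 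Thus $\ga_2(\gb_n,\gb_m)\ne 0$ forces $-2g\le n+m\le 0$, which is \refE{cloc} with $R=-2g$, $S=0$.

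The only place requiring any real thought is Step 1 in the $\glb(n)$ case, where absence of a pole at $\ga_s$ rests on the Tyurin constraint $\b_s^t\a_s=0$; the remaining steps are formal manipulations with the already-established expansions and cocycle properties.
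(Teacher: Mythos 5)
Your proposal is correct and follows essentially the same route as the paper's proof: reduce to $\glb(n)$ via tracelessness, use the Tyurin constraint $\tr(L_{s,-1})=\b_s^t\a_s=0$ to rule out poles at the weak singularities, then bound the level from above by the orders at $P_+$ and from below by the orders at $P_-$ combined with the residue theorem, with $\L$-invariance quoted from \refP{linv}. The only difference is cosmetic: you make the bounds explicit ($-2g\le n+m\le 0$ in the generic case), whereas the paper is content to assert the existence of a constant $S$.
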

\begin{proof}
As already shown above the cocycle 
$\ga_2$ can be written as \refE{ga2}.
The matrices of $\slnb(n),\sob(n)$ and $\spnb(2n)$ are traceless, hence 
for them the cocycle vanishes. 
It remains to consider $\glb(n)$.
Set $h:=\tr(L)\cdot \tr(dL')$, which  is a meromorphic differential.
The order of $h$ at $P_+$ is bounded from below by 
$\ord_{P_+}(L)+\ord_{P_+}(L')-1$. By the definition of the homogenous
elements \refE{almdeg} $h$  will not have any  residue at $P_+$ if 
$\deg L+\deg L'>0$.
\newline
Following the prescription \refE{glexp} we get at the points $P_s\in W$
\begin{equation}
dL'=\frac {-L'_{s,-1}}{z_s^2}+L'_{s,1}+\sum_{k>1}L'_{s,k}kz^{k-1}.
\end{equation}
By Condition \refE{gldef} $\tr\, L_{s,-1} =\tr\, L_{s,-1}'=0$.
Hence neither $\tr \,L $ nor $\tr\,dL'$  have any poles at the weak
singularities $W$. This implies that the residue of $h$ at $P_+$ 
is the negative of its  
residue  at  $P_-$.
Using  \refE{almdeg} and considering the orders at $P_-$ we
see that there is a constant  $S$  such that if $\deg L+\deg L'<S$
the differential $h$ will not have
any pole there.
This shows locality.
The $\L$-invariance is \refP{linv}.
\end{proof}
\begin{proposition}[\cite{KSlax}]\label{P:locg1}
The integration form $\tr(L\cdot \nabla^{(\w)} L')$ does not have any poles 
other than possibly at $P_\pm$. Furthermore,
\begin{equation}\label{E:ga1}
\ga_{1,\w}(L,L')=\res_{P_+}(\tr(L\cdot \nabla^{(\w)} L'))
\end{equation}
is a local cocycle.
It will be $\L$-invariant if $\w$ coincides with the connection
form $\w'$ associated to the $\L$-action.
\end{proposition}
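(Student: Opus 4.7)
The plan is to verify the three assertions in order: holomorphicity outside $P_\pm$, locality of the residue cocycle, and $\L$-invariance. The cocycle property itself is already Proposition \ref{P:c12cc}, so it suffices to analyze pole structure, order estimates, and apply Proposition \ref{P:linv}.

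First I would show that the meromorphic $1$-form $\tr(L\cdot \nabla^{(\omega)}L')$ is holomorphic at every weak singularity $\gamma_s\in W$. At points where $\a_s=0$ both $L,L'$ and $\omega$ are regular, so there is nothing to check. For $\a_s\ne 0$, working in a local coordinate $z=z_s$ and using the expansions \refE{glexp}, \refE{glexpsp}, and \refE{connl}, the calculation in the proof of \refP{action} shows that as a matrix-valued $1$-form
$\nabla^{(\omega)}L' = \bigl(M_{-1}/z+M_0+O(z)\bigr)dz$
with $M_{-1}=\a\hat\beta^t$, $\hat\beta^t\a=0$, and $M_0\a$ proportional to $\a$. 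Combined with the Tyurin conditions $L_{-1}=\a\beta^t$, $\beta^t\a=0$, $L_0\a=\kappa\a$, the coefficient of $1/z^2$ in $L\cdot \nabla^{(\omega)}L'$ is $L_{-1}M_{-1}=(\beta^t\a)\,\a\hat\beta^t=0$, and the trace of the $1/z$ coefficient is
\begin{equation*}
\tr(L_{-1}M_0)+\tr(L_0M_{-1})=\beta^t M_0\a+\hat\beta^t L_0\a=0,
\end{equation*}
by the cyclic property of trace and the two eigenvector relations. The cases $\so(n)$ and $\spn(2n)$ are analogous; for $\spn(2n)$ the extra order-two pole means more terms to bookkeep, but the combinations $\beta_s^t\sigma\a_s=0$ and $\a_s^t\sigma L_{s,1}\a_s=0$ provide exactly the cancellations needed to kill the singular part of the trace. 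This step is where I expect the real work; the computations will mirror those in the Appendix version of \refP{action}.

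Second, with holomorphicity outside $P_\pm$ established, the residue theorem on $\Sigma$ gives $\res_{P_+}=-\res_{P_-}$, so any separating cycle $C_s$ is homologous (against the form) to a small circle around $P_+$, and the integral equals the residue at $P_+$. The cocycle property descends from \refP{c12cc}.

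Third, for locality I would estimate orders at $P_+$ and $P_-$ for homogeneous $L\in\gb_n$, $L'\in\gb_m$. Since $\omega$ is holomorphic at $P_+$ by convention, $\nabla^{(\omega)}L'=dL'+[\omega,L']$ has order $\ge m-1$ as a $1$-form at $P_+$, so $\tr(L\cdot\nabla^{(\omega)}L')$ has order $\ge n+m-1$ there. Hence $\res_{P_+}$ vanishes once $n+m\ge 1$, giving the upper bound. At $P_-$, using $\ord_{P_-}L\ge -n-g$, $\ord_{P_-}L'\ge -m-g$, and the fixed pole order of $\omega$ at $P_-$, the form is holomorphic at $P_-$ for $n+m$ sufficiently negative; then $\res_{P_+}=-\res_{P_-}=0$, giving the lower bound. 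Thus $\ga_{1,\omega}$ is local.

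Finally, for the $\L$-invariance claim: taking $C$ to be a small circle around $P_+$ is a special case of \refP{linv}(b), which already shows that $\gamma_{1,\omega,C}$ is $\L$-invariant whenever $\omega=\omega'$ for any admissible $C$. No further computation is required.
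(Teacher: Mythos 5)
Your proposal is correct, but it takes a genuinely different route from the paper. The paper's own proof is essentially a citation: it rewrites $\ga_{1,\w}$ in the form \refE{coeq}, observes that \refE{ga1} is then exactly the cocycle introduced in \cite{KSlax}, and quotes Theorems 4.3, 4.6 and 4.9 of that paper for the absence of poles at the weak singularities and for locality; only the $\L$-invariance is obtained from \refP{linv}, as you do. You instead give a self-contained argument, and it hangs together: the key observation that $\nabla^{(\w)}L'$ has, at each $\ga_s$ with $\a_s\ne 0$, the same Tyurin-type expansion as a Lax operator (for $\gl(n)$ at most a first-order pole with residue $\a_s\hat\b_s^t$, $\hat\b_s^t\a_s=0$, and the eigenvector condition on the constant term) is precisely what the proof of \refP{action} and the Appendix establish, and your trace computation then correctly kills both singular coefficients of $\tr(L\cdot\nabla^{(\w)}L')$. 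Your locality argument is also sound and is a direct estimate the paper never writes out: order $\ge n+m-1$ at $P_+$ because $\w$ is holomorphic there, hence vanishing residue once $n+m\ge 1$; holomorphy at $P_-$ for $n+m$ sufficiently negative, combined with $\res_{P_+}=-\res_{P_-}$ (which uses the first step and the residue theorem), gives the lower bound. What your route buys is independence from the external reference and visible reuse of machinery already in the paper; what it costs is the case-by-case bookkeeping you only sketch. In particular, for $\spnb(2n)$ the product has poles up to order four at $\ga_s$, and killing the traces of all four singular coefficients requires not only $\b_s^t\sigma\a_s=0$ and $\a_s^t\sigma L_{s,1}\a_s=0$ but also $\a_s^t\sigma\a_s=0$ and the symplecticity relation $X^t=-\sigma X\sigma^{-1}$ for the degree-zero coefficients (to handle terms such as $\tr(L_{s,-1}M_0)$); similarly the $\so(n)$ case needs skew-symmetry of the coefficient matrices. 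So that step, while true, is genuinely more than a mirror of the $\gl(n)$ computation and would need to be written out for a complete proof.
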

\begin{proof}
As noticed  above, the cocycle $\ga_{1,\w}$ can be written in
the form  \refE{coeq}. Hence \refE{ga1} is exactly 
the cocycle discussed in \cite{KSlax}. 
Its locality is stated there in 
Theorems  4.3, 4.6 and 4.9.
The $\L$-invariance follows from \refP{linv}.
\end{proof}

\subsection{Main theorem}
$ $

\begin{theorem}\label{T:main}
$ $

(a) If $\g$ is simple (i.e. $\gb=\sln(n), \so(n), \spn(2n)$) then 
the space of local cohomology  classes is one-dimensional.
If we fix any connection form $\w$ then the space 
will be generated by the class of $\ga_{1,\omega}$.
Every $\L$-invariant (with respect to the connection 
$\w$) local cocycle is a scalar multiple of 
$\gamma_{1,\omega}$.

(b) For $\laxg=\bgl$ the space of local 
cohomology classes which are $\L$-invariant
having been restricted to the scalar subalgebra is two-dimensional.
If we fix any connection form $\w$ then the space
will  be generated by the classes of the 
cocycles $\gamma_{1,\omega}$ and $\gamma_2$.
Every $\L$-invariant local cocycle is a linear combination of  
$\gamma_{1,\omega}$ and $\gamma_2$.
\end{theorem}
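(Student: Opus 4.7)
The plan is to treat existence and uniqueness separately, following the outline the introduction promises. Existence is already assembled in Section 3: Propositions \ref{P:locg1} and \ref{P:locg2} show that $\gamma_{1,\omega}$ (with $C$ a small circle around $P_+$ and $\omega=\omega'$) and $\gamma_2$ are both local and $\L$-invariant, and $\gamma_2$ vanishes identically on $\slnb(n),\sob(n),\spnb(2n)$ but not on $\snb(n)$, so $\gamma_{1,\omega}$ and $\gamma_2$ represent linearly independent classes in the $\glb(n)$ case. It remains to show the upper bounds on the dimensions, which is the content of Sections \ref{S:induction} and \ref{S:direct}.

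First I would reduce any local, $\L$-invariant cocycle $\gamma$ to its ``diagonal'' values. Apply $\L$-invariance with $e=e_0$ to homogeneous elements $X_m,Y_k$; by Proposition \ref{P:almgrad},
\begin{equation*}
\nabla_{e_0}X_m = m\,X_m + R_m,\qquad R_m\in F_{m+1},
\end{equation*}
and similarly for $Y_k$, so that $(m+k)\,\gamma(X_m,Y_k) = -\gamma(R_m,Y_k)-\gamma(X_m,R_k')$. Locality restricts $n=m+k$ to a finite band $R\le n\le S$; by descending induction on $n$ starting from the top of the band, where the correction terms fall outside the locality range and hence vanish, I propagate the vanishing of $\gamma(\gb_m,\gb_k)$ to every $n\ne 0$. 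This concentrates $\gamma$ on the diagonal $m+k=0$.

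Next I would pin down $\gamma$ on the diagonal using the internal Lie-theoretic structure of $\g$ together with weak perfectness (Lemma \ref{L:weak}) and coboundary adjustments. Because $[X_k,Y_m] = [X,Y]_{k+m}+\text{(lower)}$ by Proposition \ref{P:alalg}, a coboundary $\delta\phi$ with $\phi$ suitably supported adds $\phi([X,Y]_0)$ plus controlled lower-order terms to $\gamma(X_n,Y_{-n})$, and one can arrange $\phi$ to remain compatible with locality. Writing each diagonal pairing in terms of root vectors and Cartan elements of $\g$, and using the cocycle identity on triples $(X_n,Y_{-n},Z_0)$ together with the $\L$-invariance relations already extracted, the simple case collapses onto a single free scalar: the value $\gamma(H^\alpha_1,H^\alpha_{-1})$ for one fixed simple root $\alpha$, mirroring Garland's argument in the graded case. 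This forces $\dim H^2_{loc,\L}(\slnb(n),\C)\le 1$, etc., and comparison with $\gamma_{1,\omega}$ gives (a). For (b) the $\L$-module decomposition $\glb(n) = \snb(n)\oplus \slnb(n)$ is orthogonal for any $\L$-invariant local cocycle (mixed terms vanish because $[\snb(n),\slnb(n)]=0$, forcing $\gamma$ to be a coboundary on mixed arguments, and then locality and invariance kill the remaining freedom); the $\slnb(n)$ block is handled by (a) while on $\snb(n)\cong\A$ one is reduced to the Krichever-Novikov Heisenberg-type classification of local $\L$-invariant cocycles from \cite{Scocyc}, which is one-dimensional and represented by $\gamma_2$.

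The main obstacle will be step two: because $\gb$ is merely almost-graded (rather than graded) and, for simple $\g$, only \emph{weakly} perfect, every simplification of the cocycle comes with lower-order corrections supplied by the Tyurin data at the weak singularities. The delicate point is choosing the coboundaries used to normalize $\gamma$ so that they respect the locality bound and so that the resulting recursion on the finitely many diagonal pairings actually terminates at a single scalar. Controlling these corrections uniformly, exactly as in \cite{Scocyc,Saff} but now in the presence of the constraints \refE{gldef}, \refE{sodef}, \refE{spdef} at $W$, is where the real technical work lies.
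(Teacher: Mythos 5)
Your proposal correctly reconstructs the paper's \refS{induction} strategy for cocycles that are both local \emph{and} $\L$-invariant, and the existence and splitting statements you invoke are the right ones. But it misses the strongest claim of part (a): that the space $\H^2_{loc}(\gb,\C)$ of \emph{all} local cohomology classes is one-dimensional, with no $\L$-invariance hypothesis. Every step of your uniqueness argument uses $\L$-invariance --- the reduction to the diagonal is exactly relation \refE{recform} with $p=0$, which is derived from invariance under $\nabla_{e_0}$ --- so in the end you have only bounded $\H^2_{loc,\L}(\gb,\C)$, as you in fact state. A priori there could exist local classes admitting no $\L$-invariant representative, and your argument cannot rule this out. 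The paper closes precisely this gap in \refS{direct} (\refT{locuni}) by a genuinely different induction that never invokes $\L$-invariance: one first normalizes an arbitrary bounded cocycle by an explicit coboundary built from the Chevalley generators (\refD{norc}), and then uses only the cocycle identity together with the almost-graded Chevalley relations \refE{chevlax} --- in particular the key recursion \refE{hrec} and Propositions \ref{P:p5}--\ref{P:p6} --- to kill all levels $\ne 0$ and to reduce level zero to the single value $\ga(H^{\a}_1,H^{\a}_{-1})$ for one fixed simple root. The same gap affects part (b), whose hypothesis imposes $\L$-invariance only on the restriction to the scalar subalgebra: your treatment of the $\slnb(n)$ block assumes invariance there as well, which is not given.

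Two smaller points. First, the last assertion of (a) is about cocycles, not classes: every $\L$-invariant local \emph{cocycle} equals a scalar multiple of $\ga_{1,\w}$ on the nose. Since your second step normalizes by coboundaries (which destroy $\L$-invariance), it only yields ``cohomologous to a multiple''. The paper avoids this by staying inside the invariant world: an $\L$-invariant cocycle bounded from above is completely determined by the symmetric invariant bilinear form $\psi_\ga$ on $\g$ (\refT{fixing}), and for simple $\g$ such a form is a multiple of the Cartan-Killing form, whence $\ga$ itself is a multiple of $\ga_{1,\w}$, no coboundary adjustment needed. Second, to conclude that the dimensions are exactly $1$ (resp.\ $2$) and that your two classes are independent, you need that $\ga_{1,\w}$ is not a coboundary (\refP{nonbound}); the observation that $\ga_2$ vanishes on the simple part but not on the scalar part does not by itself give independence of the cohomology classes.
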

\begin{proof}
The technicalities of the proof will be covered in \refS{induction} and
\refS{direct}.
In particular, by \refP{lsimp} and \refP{lcom} it follows
that $\L$-invariant and local cocycles are necessarily linear
combinations of the claimed form.
Hence the theorem will follow  for the cohomology space
$\H_{loc,\L}(\gb,\C)$. For the abelian part we had to put the
$\L$-invariance into the requirements. Hence for this part
we are done. For the simple algebras, resp. the simple part, we 
have to show that in each local cohomology class there is an
$\L$-invariant representative. But by \refT{locuni} the space
$\H_{loc}(\gb,\C)$ is at most one-dimensional. As 
by \refP{nonbound} 
the local cocycle $\ga_{1,\omega}$  is not 
a coboundary, this space is
exactly one-dimensional and $\ga_{1,\omega}$ is 
its representing element.
\end{proof} 
\begin{corollary}
Let $\g$ be a simple classical Lie algebra and $\gb$ the associated
Lax operator  algebra. 
Let $\w$ be a fixed connection form.
Then in each
$[\ga]\in \H_{loc}(\gb,\C)$ there exists a unique 
representative $\ga'$ which is local and $\L$-invariant (with respect to
$\w$). 
Moreover, $\ga'=\a\ga_{1,\omega}$, with $\a\in\C$.
\end{corollary}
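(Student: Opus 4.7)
The plan is to deduce the corollary directly from \refT{main}(a) together with the already-established properties of $\ga_{1,\omega}$. The theorem essentially contains everything we need; the corollary is a repackaging that asserts both existence and uniqueness of a canonical representative inside each local cohomology class.

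\emph{Existence.} Given $[\ga]\in\H^2_{loc}(\gb,\C)$, \refT{main}(a) says that the space of local cohomology classes is one-dimensional and generated by $[\ga_{1,\omega}]$. Hence there is a scalar $\a\in\C$ with $[\ga]=\a[\ga_{1,\omega}]$. Set $\ga':=\a\ga_{1,\omega}$. Then $\ga'$ is a representative of $[\ga]$, it is local since scaling a local cocycle keeps it local (by \refP{locg1}), and it is $\L$-invariant by \refP{linv}(b) applied to the connection form $\omega=\omega'$ (which is the running assumption for the $\L$-action in the statement).

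\emph{Uniqueness.} Suppose $\ga''$ is another local, $\L$-invariant cocycle with $[\ga'']=[\ga]$. Then $\delta:=\ga'-\ga''$ is a coboundary, and it is still local and $\L$-invariant since these properties are closed under linear combinations. By the ``moreover'' clause of \refT{main}(a), every $\L$-invariant local cocycle is a scalar multiple of $\ga_{1,\omega}$, so $\delta=\mu\,\ga_{1,\omega}$ for some $\mu\in\C$. But $\ga_{1,\omega}$ is not a coboundary: this is precisely the non-triviality statement used in the proof of \refT{main} (cited there as ``\refP{nonbound}'') to show that the one-dimensional space $\H^2_{loc}(\gb,\C)$ is in fact nonzero. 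Therefore $\mu=0$, i.e.\ $\ga'=\ga''$.

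The ``Moreover'' assertion $\ga'=\a\ga_{1,\omega}$ is immediate from the construction in the existence step. There is no genuine obstacle here beyond invoking the main theorem together with the non-coboundary property of $\ga_{1,\omega}$; the real work has already been carried out in Sections \refS{induction} and \refS{direct} of the paper. The only point requiring a moment of care is that $\L$-invariance is preserved under subtraction (so that $\delta$ remains $\L$-invariant), which is obvious from the defining bilinear identity.
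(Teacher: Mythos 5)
Your proof is correct and follows exactly the route the paper intends: existence via \refT{main}(a), and uniqueness by noting that the difference of two local $\L$-invariant representatives is a local $\L$-invariant coboundary, hence a multiple of $\ga_{1,\omega}$ by the ``moreover'' clause, hence zero by \refP{nonbound} --- which is precisely the argument the paper itself records as \refP{boundlinv}(a). No gaps; all cited facts (\refP{locg1}, \refP{linv}, \refP{nonbound}) are established independently of the corollary, so there is no circularity.
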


\begin{proposition}\label{P:nonbound}
The cocycle $\ga=\ga_{1,\omega}$ is not 
a coboundary.
\end{proposition}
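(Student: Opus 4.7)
The plan is to evaluate $\gamma:=\gamma_{1,\omega}$ on a Cartan-type pair of opposite degrees, exhibit an explicit nonzero value that grows linearly in the degree, and use almost-gradedness to rule out this growth being realized by a coboundary.

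First, fix $H\in\mathfrak{g}$ with $\mathrm{tr}(H^2)\neq 0$; this is possible for each algebra under consideration (a nonzero Cartan element for the simple classical cases, or e.g.\ $I_n$ for $\mathfrak{gl}(n)$). By \refP{locex}, denote by $H_k\in\bar{\mathfrak{g}}_k$ the unique homogeneous element with $H_k=Hz_+^k+O(z_+^{k+1})$ at $P_+$. Write $\omega=\tilde\omega\,dz_+$ locally at $P_+$, with $\tilde\omega$ regular there by the standing assumption that $\omega$ is holomorphic at $P_+$.

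Second, compute $\gamma(H_k,H_{-k})$ from \refE{ga1}. Expanding $\nabla^{(\omega)}H_{-k}=dH_{-k}+[\tilde\omega,H_{-k}]\,dz_+$, a direct calculation gives
\[
\mathrm{tr}(H_k\cdot dH_{-k})=-k\,\mathrm{tr}(H^2)\,z_+^{-1}\,dz_++O(1)\,dz_+,
\]
while $\mathrm{tr}(H_k\cdot[\tilde\omega,H_{-k}])\,dz_+$ is regular at $P_+$ (since $H_k H_{-k}=H^2+O(z_+)$ and $\tilde\omega$ is regular). Taking residues yields $\gamma(H_k,H_{-k})=-k\,\mathrm{tr}(H^2)\neq 0$ for all $k\neq 0$, a quantity unbounded in $k$.

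Third, suppose for contradiction that $\gamma=\delta\phi$ for some linear $\phi\colon\bar{\mathfrak{g}}\to\mathbb{C}$. Then $\phi([H_k,H_{-k}])=-k\,\mathrm{tr}(H^2)$ for all $k\in\mathbb{Z}$. Applying \refE{alalg} with $[H,H]=0$, each bracket $[H_k,H_{-k}]$ lies in the fixed finite-dimensional subspace $V:=\bigoplus_{j=1}^{M}\bar{\mathfrak{g}}_j$, where $M$ is the almost-grading constant from \refT{almgrad}. Hence the family $\{[H_k,H_{-k}]\}_{k\in\mathbb{Z}}$ has finite-dimensional span inside $V$ and admits non-trivial linear relations $\sum_k\mu_k[H_k,H_{-k}]=0$. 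Applying $\phi$ converts any such relation into $\sum_k\mu_k\,k=0$, so the desired contradiction is obtained by producing a relation with $\sum_k\mu_k\,k\neq 0$.

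The main obstacle is this last step, namely exhibiting a suitable linear relation. In the classical Kac-Moody situation one has $[H_k,H_{-k}]\equiv 0$ outright (Cartan elements sit in an abelian current subalgebra), and the relation $[H_1,H_{-1}]=0$ alone finishes the argument. In the Lax operator setting the bracket is nontrivial because of the Tyurin constraints at $W$, and one must analyze $[H_k,H_{-k}]$ via the point-wise matrix commutator using the local expansions \refE{glexp}, \refE{sodef}, \refE{spdef}: one shows that the coordinates of $[H_k,H_{-k}]$ in a fixed basis of $V$ stabilize for $|k|$ sufficiently large, so that successive differences such as $[H_k,H_{-k}]-[H_{k+1},H_{-(k+1)}]$ eventually vanish, producing a relation with $\sum_k\mu_k\,k\neq 0$. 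This stabilization argument follows the general scheme of \cite{Saff}. A shortcut is available in the $\bar{\mathfrak{gl}}(n)$ case via the scalar subalgebra $\bar{\mathfrak{s}}(n)\cong\mathcal{A}$: taking $L=fI_n$, $L'=gI_n$ with $f=A_1$, $g=A_{-1}$, one has $[L,L']=0$ but $\gamma(L,L')=n\,\mathrm{res}_{P_+}(f\,dg)\neq 0$, which immediately excludes $\gamma$ from being a coboundary.
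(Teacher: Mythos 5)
Your residue computation in the second step is correct, and so is the reduction in the third step: if $\gamma=\delta\phi$, then $\phi([H_k,H_{-k}])=-k\,\tr(H^2)$ for all $k$, and any nontrivial relation $\sum_k\mu_k[H_k,H_{-k}]=0$ with $\sum_k\mu_k k\neq 0$ would finish the proof. The genuine gap is that no such relation is ever produced, and the finite-dimensionality of $V=\bigoplus_{j=1}^{M}\gb_j$ cannot produce it by itself: nothing you have established excludes the scenario $[H_k,H_{-k}]=k\,w$ for a single fixed $w\in V$ with $\phi(w)=-\tr(H^2)$, which is perfectly compatible with $\dim V<\infty$, with \refE{alalg}, and with $\gamma$ being a coboundary; in that scenario every linear relation among the brackets automatically satisfies $\sum_k\mu_k k=0$ and no contradiction arises. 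So the whole weight rests on your stabilization claim, which you assert but do not prove. It is not obvious, and the appeal to \cite{Saff} does not support it: there the algebras are of the form $\g\otimes\A$, where $H_k=H\otimes A_k$ and $[H_k,H_{-k}]=[H,H]\otimes A_kA_{-k}=0$ identically, so no stabilization is needed; in the genuinely Lax case ($\a_s\neq 0$) the expansion coefficients of $H_k$ at $P_-$ and at the weak singularities vary with $k$, and the paper gives no control on $[H_k,H_{-k}]$ beyond membership in $F_1$.

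The paper closes exactly this gap by changing the test elements rather than analyzing the brackets $[H_k,H_{-k}]$: it sets $H_{(n)}:=H_0\cdot A_n$, using that $\gb$ is a module over $\A$ (\refS{Amod}), so $H_{(n)}\in\gb$ and $H_{(n)}=H\cdot A_n+O(z_+^{n+1})$ still has the correct leading behavior at $P_+$. Since the $A_{\pm1}$ are scalar functions, $[H_{(-1)},H_{(1)}]=[H_0,H_0]A_{-1}A_1=0$ exactly, not merely modulo $F_1$, while essentially the same residue computation as yours gives $\gamma(H_{(-1)},H_{(1)})=\tr(H^2)=\alpha\,\kappa(H,H)\neq 0$; this contradicts $\gamma=\delta\phi$ at once. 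Your $\glb(n)$ shortcut with $L=A_1I_n$, $L'=A_{-1}I_n$ is exactly this trick with $I_n$ in place of $H_0$, and it is correct as far as it goes, but it is unavailable for the simple algebras $\slnb(n)$, $\sob(n)$, $\spnb(2n)$ --- precisely the cases in which \refP{nonbound} is invoked in \refT{main} and \refP{lsimp}. Replacing $I_n$ by the degree-zero element $H_0$ of \refP{locex} turns your shortcut into the paper's complete proof.
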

\begin{proof}
Assume that $\ga$ is a coboundary. This means that there exists 
a linear form $\phi:\gb\to\C$ such that
\begin{equation}\label{E:contra}
\ga(L,L')=\res_{P_+}\tr(L\cdot\nabla L')=
\phi([L,L']).
\end{equation}
Take $H\in\fh$ with $\kappa(H,H)\ne 0$, 
where $\fh$ is the Cartan subalgebra of the simple part of 
$\g$ and $\kappa$  its Cartan-Killing form.
Furthermore, let $H_0\in\gb$  be the element fixed by
\refE{locex}.
In particular, we have
$H_0=H+O(z_+)$.
We set%
\footnote{Notice  that  $H_{(n)}$ and $H_{n}$, in general, 
are different but coincide up to 
higher order.}  
$H_{(n)}:=H_0\cdot A_n\in\gb$ 
and hence
$H_{(n)}=H\cdot A_n+O(z_+^{n+1})$.
In  the following, let $n\ne 0$.
We have
\begin{equation}
\nabla H_{(n)}=\nabla(H_0\cdot A_n)=
\nabla(H_0)\cdot A_n+H_0\;dA_n.
\end{equation}
The expression $\nabla H_0$ is of nonnegative order, $A_n$ is of
order $n$, $H_0$ of order 0 and $dA_n$ of order $n-1$ at the
point $P_+$. Hence
\begin{equation}
\nabla H_{(n)}=H_0\;dA_n+O(z_+^{n})dz_+.
\end{equation}
Now we calculate
\begin{equation}
\ga(H_{(-1)},H_{(1)})=
\res_{P_+}\tr(H_{(-1)}\cdot \nabla H_{(1)})=
\res_{P_+}\tr(H_0 A_{-1} H_0 dA_{1})=
\res_{P_+}\tr(H_0^2 \frac{dz_+}{z_+}).
\end{equation}
As $H^2_0=H^2+O(z_+)$ 
we obtain 
\begin{equation}\label{E:con1}
\ga(H_{(-1)},H_{(1)})=
\res_{P_+}(\tr (H^2)\frac{dz_+}{z_+})
=\tr (H^2)=\alpha\cdot\kappa(H,H)\ne 0,
\end{equation}
with a non-vanishing constant $\alpha$ relating the trace form
with the Cartan-Killing form.
But
\begin{equation}\label{E:con2}
[H_{(-1)},H_{(1)}]=
[H_0A_{-1}, H_0A_1]=
[H_0,H_0]A_{-1}A_{1}=0.
\end{equation}
The relations \refE{con1} and \refE{con2} 
are in contradiction to \refE{contra}.
\end{proof}
\medskip
\begin{proposition}\label{P:boundlinv}
$ $

\noindent
(a) Let $\ga$ be a local and $\L$-invariant cocycle which is a 
coboundary, then $\ga= 0$.

\noindent
(b) Let $\g$ be simple, then the cocycle $\ga_{1,\w'}$ is $\L$-invariant
with respect to $\w$, if and only if $\w=\w'$.
\end{proposition}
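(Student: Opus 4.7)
For part (a), combining the $\L$-invariance hypothesis with the derivation property $\nabla_e^{(\w)}[L,L']=[\nabla_e^{(\w)}L,L']+[L,\nabla_e^{(\w)}L']$ from \refP{deri} yields immediately
\[
\phi(\nabla_e^{(\w)}[L,L'])=0,\qquad \forall e\in\L,\ L,L'\in\gb,
\]
where $\phi$ is a potential with $\ga=d\phi$. The plan is then a downward induction on degree. First, locality of $\ga$ together with the almost-grading from \refT{almgrad} gives a constant $N_0$ such that $\phi$ vanishes on every bracket component lying in $\gb_h$ for $h>N_0$, and symmetrically for $h$ sufficiently negative. Second, by \refP{almgrad}(b) the operator $\nabla_{e_0}^{(\w)}$ acts on $\gb_h$ as multiplication by $h$ plus a shift into $F_{h+1}$, so the identity above lets me solve for $\phi$ on a degree-$h$ bracket element (when $h\ne 0$) in terms of $\phi$ on strictly higher-degree bracket elements. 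Iterating downward from $N_0$ forces $\phi|_{[\gb,\gb]\cap\gb_h}=0$ for every $h\ne 0$. The degree-zero case is treated by the analogous argument with $\nabla_{e_j}^{(\w)}$ for some $j\ne 0$ applied to bracket elements at degree $-j$; the leading term lands in $\gb_0$ with nonzero coefficient $-j$, while the correction lies in degrees already controlled by the earlier induction.

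For part (b) the ``if'' direction is \refP{linv}(b). For the converse, suppose $\ga_{1,\w'}$ is $\L$-invariant with respect to $\w$. Since $\ga_{1,\w}$ is also $\L$-invariant with respect to $\w$ by \refP{linv}(b), the difference $\ga_{1,\w}-\ga_{1,\w'}$ is $\L$-invariant with respect to $\w$. It is moreover a coboundary by \refP{cind}(b) and local by \refP{locg1}, so part (a) forces $\ga_{1,\w}=\ga_{1,\w'}$. Formula \refE{cobw} then reads
\[
\cinc{C}\tr(\theta\cdot[L,L'])=0,\qquad \forall L,L'\in\gb,
\]
where $\theta=\w-\w'$ and $C$ is a small circle around $P_+$.

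To extract $\theta=0$ I invoke weak perfectness (\refL{weak}): every $N\in\gb$ decomposes as a finite sum of brackets plus a remainder $R\in F_m$ for $m$ arbitrarily large. Since $\theta$ is holomorphic at $P_+$, for $m$ large enough the remainder contributes no residue, so $\cinc{C}\tr(\theta\cdot N)=0$ for every $N\in\gb$. Testing this against the canonical elements $X_m$ of \refP{locex} for $X\in\g$ and $m\le -1$, the residue evaluates to $\tr(\theta_k\,X)$, where $\theta_k$ is the $k$-th Taylor coefficient of $\theta$ at $P_+$, $k=-m-1\ge 0$. Non-degeneracy of the trace form on each classical simple $\g$ forces $\theta_k=0$ for all $k\ge 0$; since $\theta$ is meromorphic on $\Sigma$ and holomorphic at $P_+$, this gives $\theta\equiv 0$, i.e.\ $\w=\w'$.

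The main obstacle is the downward induction in part (a): the correction terms produced by \refP{almgrad}(b) and \refE{alalg} cascade into higher-degree brackets that must be carefully bookkept so the induction actually closes, and the handoff to degree zero via $\nabla_{e_j}^{(\w)}$ with $j\ne 0$ must be compatible with the inductive hypotheses already established for $h\ne 0$. Once (a) is proved, part (b) is essentially clean: it reduces to part (a) via \refP{cind}(b) followed by an elementary non-degeneracy calculation against the basis $\{X_m\}$.
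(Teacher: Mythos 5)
Your part (b) is fine once part (a) is available, and it is essentially the paper's own argument: both reduce to part (a) via \refP{linv} and \refP{cind}, and then force $\theta=\w-\w'=0$ by combining weak perfectness (\refL{weak}) with nondegeneracy of the trace form and holomorphy of $\theta$ at $P_+$ (your claim that the residue equals $\tr(\theta_k X)$ needs a small induction on $k$, since the coefficients $\theta_j$ with $j<k$ also pair with the non-leading terms of $X_{-k-1}$; this is harmless). The genuine gap is in part (a), precisely at level zero. Your downward induction can be made to work at positive levels: expanding $\nabla_{e_0}[X_n,Y_m]=[\nabla_{e_0}X_n,Y_m]+[X_n,\nabla_{e_0}Y_m]$ and using \refE{allstr}, all correction terms are brackets of strictly higher level, so modulo the inductive hypothesis $(n+m)\,\phi([X_n,Y_m])=0$. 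But at level zero the analogous computation with $\nabla_{e_j}$, $j\ne0$, applied to $[X_n,Y_m]$ with $n+m=-j$, yields only
\[
n\,\phi\bigl([X_{n+j},Y_m]\bigr)+m\,\phi\bigl([X_n,Y_{m+j}]\bigr)=0,
\]
a relation between two \emph{level-zero} values. Exactly as in \refE{nm1}, such relations reduce everything to $\phi([X_1,Y_{-1}])$; they cannot make it vanish. No amount of bookkeeping will repair this, because every step of your argument is a formal consequence of the single statement that the bilinear form $(L,L')\mapsto\phi([L,L'])$ is local and $\L$-invariant, and the cocycle $\ga_{1,\w}$ satisfies that statement while being nonzero at level zero. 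Hence locality and $\L$-invariance alone can never yield level-zero vanishing; the coboundary hypothesis must be used beyond the mere rewriting $\ga=\phi\circ[\,\cdot\,,\,\cdot\,]$.

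The missing idea is the one in \refP{nonbound} (and in the last proposition of \refS{direct}): exploit that $\phi$ is a genuine linear functional on $\gb$, so that $[L,L']=0$ forces $\ga(L,L')=\phi(0)=0$. Concretely, take $H\in\fh$ with $\kappa(H,H)\ne0$ and set $H_{(\pm1)}:=H_0\cdot A_{\pm1}$ using the $\A$-module structure; then $[H_{(1)},H_{(-1)}]=[H_0,H_0]A_1A_{-1}=0$, so $\ga(H_{(1)},H_{(-1)})=0$, while $H_{(\pm1)}\equiv H_{\pm1}$ modulo higher degree, so your positive-level vanishing gives $\ga(H_1,H_{-1})=0$. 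Since by the analysis of \refS{induction} the level-zero values of a bounded, $\L$-invariant cocycle define a symmetric invariant bilinear form $\psi_\ga$ on $\g$ --- a multiple of the Killing form when $\g$ is simple --- this single vanishing kills all of level zero and your induction then closes (for $\glb(n)$ the scalar part $\snb(n)\cong\A$ is handled separately: it is abelian, so any coboundary vanishes on it identically, and one argues as in \refP{lcom}). For comparison, the paper's own proof of (a) is different and much shorter: it quotes the classification \refT{main} to write $\ga=\a\ga_{1,\w}+\b\ga_2$ and then uses that no nontrivial such combination is a coboundary, via \refP{nonbound}. Your direct route is viable, but only after adding the ingredient above.
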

\begin{proof}
(a) By \refT{main}  we get $\ga=\a\ga_{1,\w}+\b\ga_2$, with $\b=0$ for
the case $\g$ is simple.
But none of these cocycles is a  coboundary. Hence $\a=\b=0$.

\noindent
(b) As $\ga_{1,\w}$ and  $\ga_{1,\w'}$
are local and $\L$-invariant with respect to $\w$ their difference
$\ga_{1,\w}-\ga_{1,\w'}$ is also local and $\L$-invariant.
By \refP{cind} it is  a coboundary. Hence by part (a)  
$\ga_{1,\w}-\ga_{1,\w'}= 0$.
Equation \refE{cobw} gives the explicit expression. Assume $\w\ne\w'$. 
Let $m$ be the order of 
\begin{equation}
\theta=\w-\w'=(\theta_mz_+^m+O(z_+^m))dz_+
\end{equation}
at the point $P_+$.
As  $\g$ is simple the trace form $\tr(A\cdot B)$ 
is  nondegenerate and  we find 
\begin{equation}
\hat\theta=\hat\theta_{-m-1}z^{-m-1}+O(z_+^{-m}),
\end{equation}
such that $\b=\tr(\theta_m\cdot\hat\theta_{-m-1})\ne 0$. By 
\refL{weak} we get 
$\hat\theta=[L,L']+L''$  with 
$\ord(L'')\ge -m$. Hence,
\begin{multline}
0\ne \b=\tr(\theta_m\cdot\hat\theta_{-m-1})=
\frac{1}{2\pi\i}\int_{C_s}\tr\left((\w-\w')\cdot( [L,L']+L'')\right)
\\
=\frac{1}{2\pi\i}\int_{C_s}\tr\left((\w-\w')\cdot [L,L']\right)
=\ga_{1,\w}(L,L')-\ga_{1,\w'}(L,L')=0
\end{multline}
which is a contradiction.
\end{proof}
\section{Uniqueness of $\L$-invariant cocycles}
\label{S:induction}

\subsection{General induction}
$ $

Recall that we have the decomposition $\laxg=\oplus_{n\in\Z}\laxg_n$ into
subspaces of homogenous elements of  degree $n$.
The subspace $\laxg_n$ is generated by the basis
$\{L_n^r\mid r=1,\ldots,\dim\g\}$.

In the following, let $\ga$ be an $\L$-invariant cocycle for
the algebra $\laxg$. We  only assume that it is bounded
from above, i.e. there exists a $K$ (independent of 
$n$ and $m$) such that
$\ga(\gb_n,\gb_m)\ne 0$ implies $n+m\le K$.
Furthermore, we recall that our connection $\omega$ needed
to define the action of $\L$ on $\laxg$ is chosen to be holomorphic 
at the point $P_+$.

For a pair $(L^r_n,L^s_m)$ of homogenous elements  we call
$n+m$ the {\it level} of the pair.
Following the strategy developed in \cite{Scocyc}
we will consider the cocycle values $\ga(L^r_n,L^s_m)$ of pairs of level 
$l=n+m$ and will make induction over the level.
By the boundedness from above, the cocycle values will vanish at 
all pairs
of sufficiently high level, and it will turn out that everything will 
be fixed by the values of the cocycle at level zero.
Finally, we will show uniqueness of the cocycle up to rescaling at
level zero.

For a cocycle $\ga$ evaluated for pairs of elements of level
$l$ we will use the symbol $\equiv$ to denote that the expressions are
the same on both sides of an equation up to values of $\ga$  at
higher level. 
This has to be understood in the following strong sense:
\begin{equation}
\sum \alpha^{n}_{r,s}\ga(L_n^r,L_{l-n}^s)\equiv 0,\qquad 
 \alpha^{n}_{r,s}\in\C
\end{equation}
means a congruence modulo a linear combination of values of $\ga$ at 
pairs of basis elements of level $l'>l$. The coefficients of that
linear combination, as well as the  $\alpha^{n}_{r,s}$, depend only  
on the structure of the  Lie algebra $\gb$ and do not depend on $\ga$.

We will also use the same symbol $\equiv$ for equalities in  $\gb$ which
are true modulo terms of higher degree compared to the terms
explicitly written down.

\medskip

By the $\L$-invariance we have 
\begin{equation}
\ga(\nabla_{e_p}L_m^r,L_n^s)+
\ga(L_m^r,\nabla_{e_p}L_n^s)=0.
\end{equation}
Using the almost-graded structure \refE{allstr}
we obtain the following useful formula 
\begin{equation}\label{E:recform}
m\ga(L_{p+m}^r,L_n^{s})+
n\ga(L_{m}^r,L_{n+p}^{s})\equiv 0,
\end{equation}
valid for all $n,m,p\in\Z$.

\begin{proposition}\label{P:ln0}
Let $m+n\ne 0$ then at  level $m+n$ we have
\begin{equation}\label{E:ln0}
\ga(\laxg_m,\laxg_n)\equiv 0.
\end{equation}
\end{proposition}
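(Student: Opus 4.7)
The plan is to apply the $\L$-invariance identity \refE{recform} with the special choice $p = 0$. That formula, which is the direct consequence of $\L$-invariance combined with the almost-graded action $\nabla_{e_p} L_m^r \equiv m L_{p+m}^r$ coming from \refE{allstr}, reads in full generality
$$m\, \ga(L_{p+m}^r, L_n^s) + n\, \ga(L_m^r, L_{n+p}^s) \equiv 0.$$
Setting $p = 0$ collapses both pairs on the left-hand side to the single pair $(L_m^r, L_n^s)$, giving
$$(m+n)\, \ga(L_m^r, L_n^s) \equiv 0.$$
Since by hypothesis $m + n \ne 0$, I may divide through by the nonzero scalar $m+n$ to conclude $\ga(L_m^r, L_n^s) \equiv 0$, which is exactly the statement \refE{ln0}.

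There is essentially no genuine obstacle to this argument once the recursion \refE{recform} is available. The only technical subtlety to verify is that the higher-order correction terms arising in $\nabla_{e_0} L_m^r = m L_m^r + L$ with $L \in F_{m+1}$ (and analogously for $L_n^s$) do in fact land at strictly higher levels, so that they are absorbed by the $\equiv$ relation rather than contributing at level $m+n$. This is immediate from the definition of $\equiv$ together with the inclusion $F_{m+1} \subseteq \bigoplus_{k \ge m+1} \laxg_k$, which guarantees that any cocycle value involving such a correction term is evaluated on a pair whose total level is strictly greater than $m+n$.

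Conceptually, this proposition is the base step of the induction strategy outlined at the start of \refS{induction}: the $\L$-invariance of $\ga$, combined with the recursion coming from the almost-graded module structure, kills the cocycle at every level $l \ne 0$ modulo higher-level values. Since $\ga$ is bounded from above, an obvious descending induction starting from sufficiently large level will then force $\ga(L_m^r, L_n^s) = 0$ outright for $m + n \ne 0$, reducing the entire classification problem to an analysis of cocycle values at level zero, which is the task carried out in the subsequent parts of the paper.
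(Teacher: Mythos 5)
Your proposal is correct and follows exactly the paper's own proof: set $p=0$ in \refE{recform} to get $(m+n)\,\ga(L_m^r,L_n^s)\equiv 0$ and divide by the nonzero scalar $m+n$. Your additional remark that the correction terms from \refE{allstr} land in $F_{m+1}$ and hence contribute only at strictly higher level is precisely the justification implicit in the paper's use of the $\equiv$ notation.
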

\begin{proof}
In \refE{recform} we set $p=0$ and obtain
\begin{equation}\label{E:recform0}
(m+n)\ga(L_{m}^r,L_n^{s})
\equiv 0,
\end{equation}
Hence for $m+n\ne 0$ it follows
that $\ga(L_{m}^r,L_n^{s})\equiv 0$.
\end{proof}
\begin{proposition}\label{P:zerodeg}
\begin{equation}\label{E:zerodeg}
\ga(L^r_m,L_0^s)\equiv 0,\qquad
\forall m\in\Z.
\end{equation}
\end{proposition}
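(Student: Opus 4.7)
The plan is to split the claim into two cases according to whether $m = 0$ or not, and in both cases to exploit the same recursion formula \refE{recform} that drove the proof of \refP{ln0}.

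For $m \neq 0$ there is nothing new to do: the pair $(L^r_m, L^s_0)$ has level $m + 0 = m \neq 0$, so \refP{ln0} applied directly (with $n = 0$) gives $\ga(L^r_m, L^s_0) \equiv 0$ at level $m$. Note that the crucial point is simply that the total degree is nonzero, so the argument in \refP{ln0}, which uses $p = 0$ in \refE{recform}, suffices.

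The interesting case is $m = 0$, where the pair has level zero and \refP{ln0} does not apply. Here I would instead apply the recursion \refE{recform} with the choice $m \to 0$, $n \to 1$, $p \to -1$, which reads
\begin{equation}
  0 \cdot \ga(L^r_{-1}, L^s_1) + 1 \cdot \ga(L^r_0, L^s_0) \equiv 0
\end{equation}
at level zero. The first term vanishes because of the factor $m = 0$, so what remains is precisely $\ga(L^r_0, L^s_0) \equiv 0$. Equivalently, one may take any $n \neq 0$ together with $p = -n$; the prefactor $m = 0$ annihilates the unwanted term and the prefactor $n \neq 0$ can be divided out of the useful one. Conceptually, this is exactly the point of $\L$-invariance: acting by the degree-shifting vector field $e_{-n}$ on the second slot produces (up to higher-order corrections) a nonzero multiple of $L^s_0$ itself, while the first slot $L^r_0$ is annihilated to leading order because $\nabla_{e_{-n}} L^r_0$ starts in $F_{-n+1}$ with leading coefficient $0$.

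There is no real obstacle here; the only thing to be careful about is the meaning of $\equiv$ in the level-zero case, namely that the higher-order corrections coming from the $F_{p+m+1}$ tails in \refE{allstr} only produce cocycle values at level strictly greater than zero, so that they are absorbed into the $\equiv$. This is precisely the content of the derivation of \refE{recform} from the $\L$-invariance together with \refE{allstr}, so it applies verbatim. Combining the two cases yields \refE{zerodeg} for all $m \in \Z$.
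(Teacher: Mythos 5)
Your proof is correct and rests on exactly the same tool as the paper's: the recursion \refE{recform} derived from $\L$-invariance, specialized to suitable parameters. The only difference is economy — the paper evaluates \refE{recform} just once, with $m=1$, $n=0$ and $p$ arbitrary, which gives $\ga(L^r_{p+1},L^s_0)\equiv 0$ for every degree $p+1\in\Z$ simultaneously (including degree zero), so the case distinction you make between $m\neq 0$ (via \refP{ln0}) and $m=0$ (via the choice $m=0$, $n=1$, $p=-1$) is not needed.
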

\begin{proof}
We  evaluate \refE{recform}
 for the values $m=1$ and $n=0$ and obtain the result.
\end{proof}
\begin{proposition}\label{P:zerobound}

\noindent
(a) We have $\ga(\laxg_n,\laxg_m)=0$ if $n+m>0$, i.e. 
the cocycle is bounded from above by zero.

\noindent
(b) If  $\ga(\laxg_n,\laxg_{-n})=0$ then the cocycle $\ga$ vanishes
identically.
\end{proposition}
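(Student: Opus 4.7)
My plan for both parts is a short downward induction on the level $l = n+m$, combining \refP{ln0} with the boundedness-from-above assumption. First I would unpack what the congruence $\ga(\laxg_m,\laxg_n) \equiv 0$ of \refP{ln0} actually says: each value $\ga(L_m^r,L_n^s)$ with $l = m+n \ne 0$ equals a fixed $\C$-linear combination of values $\ga(L_{m'}^{r'},L_{n'}^{s'})$ at strictly higher levels $l' > l$, whose coefficients depend only on the structure of $\gb$ and not on $\ga$. The crucial consequence is that once the cocycle is known to vanish identically at every level $> l$ (and $l \ne 0$), the congruence upgrades to the genuine equality $\ga(\laxg_m,\laxg_n) = 0$ at level $l$.

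For part (a), I would fix $K$ with $\ga(\gb_n,\gb_m) = 0$ whenever $n+m > K$, as provided by the boundedness hypothesis; if $K \le 0$ the claim is trivial, so assume $K > 0$. Perform downward induction on $l$ from $K$ down to $1$: at each step $0 < l \le K$, all higher levels already yield zero (by induction, or by the base case coming from boundedness), and \refP{ln0} applies since $l \ne 0$, so $\ga$ vanishes at level $l$.

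For part (b), the hypothesis $\ga(\laxg_n,\laxg_{-n})=0$ together with part (a) gives vanishing of $\ga$ at every level $\ge 0$. I would then run the same downward-induction argument on $l = -1, -2, \ldots$, applying \refP{ln0} at each step (still valid since $l \ne 0$), to conclude that $\ga$ also vanishes at every negative level. Since $\gb = \bigoplus_{n\in\Z}\gb_n$, this forces $\ga \equiv 0$.

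I do not anticipate any genuine obstacle here, since the argument is purely formal once \refP{ln0} and boundedness from above are available. The only point deserving emphasis is the interpretation of $\equiv$: the implicit coefficients on the right-hand side are independent of $\ga$, and this is precisely what permits each congruence to be promoted to an equality as soon as all strictly higher levels have been handled. Both inductions are then well-founded in the obvious way.
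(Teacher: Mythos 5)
Your proposal is correct and follows essentially the same route as the paper: both rest on \refP{ln0} together with boundedness from above, promoting each congruence $\equiv$ to an equality once all higher levels vanish. The only cosmetic difference is that the paper phrases part (a) as a contradiction argument with a smallest upper bound $l>0$, whereas you run an explicit downward induction from the bound $K$; the two formulations are logically interchangeable, and your treatment of part (b) by downward induction over negative levels is exactly the paper's.
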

\begin{proof}
If $\ga=0$ there is nothing to show. Hence assume $\ga\ne 0$.
As $\ga$ is bounded from above, there will be a smallest upper bound $l$,
such that above $l$ all cocycle values will vanish.
Assume that $l>0$ then by  \refP{ln0} the values at level $l$ are 
expressions of levels bigger than $l$. But there the
cocycle values  vanish. Hence also at level $l$. This is a contradiction
which shows (a). 
\newline
By induction using again  \refP{ln0} it follows that if everything
vanishes in level 0, the cocycle itself will vanish.
Hence, (b).
\end{proof}
Combining Propositions \ref{P:zerodeg} and \ref{P:zerobound} we obtain
\begin{corollary}\label{C:zerodeg}
\begin{equation}\label{E:zerodg}
\ga(L^r_m,L_0^s)= 0,\qquad
\forall  m\ge 0.
\end{equation}
\end{corollary}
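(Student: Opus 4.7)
The plan is to combine Proposition~\ref{P:zerodeg} with Proposition~\ref{P:zerobound}(a), handling the cases $m>0$ and $m=0$ separately.

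First I would dispose of the case $m>0$. In this situation the pair $(L^r_m,L^s_0)$ has level $m+0=m>0$, so Proposition~\ref{P:zerobound}(a) immediately gives $\ga(L^r_m,L^s_0)=0$ (as an exact equality, not just a congruence). No appeal to \refE{zerodeg} is even needed here.

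For the remaining case $m=0$, the pair $(L^r_0,L^s_0)$ sits at level $0$, so Proposition~\ref{P:zerobound}(a) does not apply directly. Instead, I invoke Proposition~\ref{P:zerodeg} at $m=0$, which asserts
\begin{equation*}
\ga(L^r_0,L^s_0)\equiv 0.
\end{equation*}
Unwinding the meaning of $\equiv$ introduced just before \refE{recform}, this says that $\ga(L^r_0,L^s_0)$ equals a fixed $\C$-linear combination of cocycle values $\ga(L^{r'}_{n'},L^{s'}_{-n'+l'})$ at strictly positive levels $l'>0$. But each such value vanishes by the case already settled (or, equivalently, directly by Proposition~\ref{P:zerobound}(a)). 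Hence the entire linear combination is $0$, and we conclude $\ga(L^r_0,L^s_0)=0$.

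I do not anticipate any real obstacle: the only subtle point is to remember that $\equiv$ is defined as congruence modulo a \emph{universal} $\gb$-dependent combination of higher-level values, so once those higher-level values are known to vanish \emph{on the nose}, the congruence upgrades to an equality. Combining both cases yields \refE{zerodg} for all $m\ge 0$.
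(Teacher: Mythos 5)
Your proposal is correct and follows essentially the same route as the paper, which simply states that the corollary is obtained by combining Proposition~\ref{P:zerodeg} with Proposition~\ref{P:zerobound}: the case $m>0$ is immediate from the vanishing above level zero, and at level zero the congruence $\equiv 0$ upgrades to an exact equality because all the higher-level values it refers to vanish. Your case split merely makes explicit what the paper leaves implicit, and your handling of the meaning of $\equiv$ is exactly right.
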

\begin{proposition}
\begin{equation}\label{E:nm1}
\ga(L_{n}^r,L_{-n}^s)=n\cdot \ga(L_{1}^r,L_{-1}^s),
\end{equation}
\begin{equation}\label{E:p1m1}
\ga(L_{1}^r,L_{-1}^s)=\ga(L_{1}^s,L_{-1}^r).
\end{equation}
\end{proposition}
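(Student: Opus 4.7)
The plan is to derive both identities from the recursion formula \refE{recform} applied at level zero, where a crucial simplification occurs. By \refP{zerobound}(a), the cocycle $\ga$ vanishes identically on pairs of total level strictly greater than zero. Therefore at level zero, the $\equiv$ symbol (which only hides terms coming from higher levels) becomes genuine equality. This is the pivotal observation that makes the otherwise recursive-modulo relation into an honest linear recursion among values at level zero.

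First I would specialize \refE{recform} to the situation where both pairs on the left-hand side have total degree zero. Writing $n = -(p+m)$ and setting $k := p+m$, the recursion rearranges to
\begin{equation*}
m\,\ga(L_{k}^r,L_{-k}^{s}) \;=\; k\,\ga(L_{m}^r,L_{-m}^{s}).
\end{equation*}
The parameters $k$ and $m$ may be chosen independently (any $m \ne 0$ and any integer $k$). Taking $m=1$ reduces all level-zero values to the single row of values indexed by $r,s$:
\begin{equation*}
\ga(L_{k}^r,L_{-k}^{s}) \;=\; k\,\ga(L_{1}^r,L_{-1}^{s}),
\end{equation*}
which is precisely \refE{nm1}.

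For \refE{p1m1}, I would exploit the formula just derived together with the antisymmetry of $\ga$. Specialize \refE{nm1} to $k=-1$, obtaining $\ga(L_{-1}^r,L_{1}^s) = -\ga(L_1^r, L_{-1}^s)$. On the other hand, antisymmetry gives directly $\ga(L_{-1}^r,L_{1}^s) = -\ga(L_{1}^s,L_{-1}^r)$. Comparing the two expressions yields $\ga(L_{1}^s,L_{-1}^r) = \ga(L_{1}^r,L_{-1}^s)$.

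There is essentially no obstacle here, since all the hard work was done in establishing \refP{zerobound}: the only subtle point is recognizing that at level zero the congruence $\equiv$ upgrades to equality because the error terms live in positive levels where $\ga$ already vanishes. Once this is in place, both claims are a matter of substituting values and invoking antisymmetry.
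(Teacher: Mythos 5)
Your proof is correct and takes essentially the same route as the paper: both apply the recursion \refE{recform} with the parameters chosen so that every pair sits at level zero, invoke \refP{zerobound}(a) to promote $\equiv$ to genuine equality there, and obtain \refE{p1m1} from \refE{nm1} at $n=-1$ combined with antisymmetry. The only cosmetic difference is that you first record the two-parameter identity $m\,\ga(L_{k}^r,L_{-k}^{s})=k\,\ga(L_{m}^r,L_{-m}^{s})$ and then set $m=1$, whereas the paper substitutes $n=-k$, $m=1$, $p=k-1$ into \refE{recform} directly.
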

\begin{proof}
In \refE{recform} we take
the values $n=-k$, $m=1$ and $p=k-1$. This yields the expression 
\refE{nm1} up
to higher level terms. But as the level is zero, the higher 
level terms vanish.
Setting $n=-1$ we obtain \refE{p1m1}.
\end{proof}

\medskip
Before we go on let us summarize the results 
obtained up to now. Independently of the
structure of the Lie algebra $\g$, we obtain the following results
for every $\L$-invariant and bounded cocycle $\ga$:
\begin{enumerate}
\item
The cocycle is bounded from above by zero.
\item
The cocycle is uniquely given by its values at level zero.
\item
At level zero the cocycle is uniquely fixed by its values
$\ga(L_{1}^r,L_{-1}^s)$, for $r,s=1,\ldots,\dim\g$.
\item
The other cocycle values at level zero are given by
$\ga(L_{0}^r,L_{0}^s)=0$ and  $\ga(L_{n}^r,L_{-n}^s)$
given by \refE{nm1}.
\end{enumerate}

Let $X\in\g$ then we denote by 
$\tX_n$ any element
in $\laxg$ with leading term $Xz_+^n$ 
at $P_+$.
We define 
\begin{equation}\label{E:cocar}
\psi:\g\times\g\to\C\qquad
\psi_{\ga}(X,Y):=\ga(\tX_1,\tY_{-1}).
\end{equation}
As the cocycle vanishes for level greater zero, $\psi$ does not depend
on the choice of  $\tX_1$ and $\tY_{-1}$.
Obviously, it is a bilinear form on $\g$.
\begin{proposition}
(a) $\psi_{\ga}$ is symmetric, i.e.
$\psi_{\ga}(X,Y)=\psi_{\ga}(Y,X)$.
\newline
(b)  $\psi_{\ga}$ is invariant, i.e.
\begin{equation}
\psi_{\ga}([X,Y],Z)=\psi_{\ga}(X,[Y,Z]).
\end{equation}
\end{proposition}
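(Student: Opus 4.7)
The plan is to deduce both symmetry and invariance by specializing the general identities already established and the cocycle condition to carefully chosen triples of ``lowest order'' homogeneous lifts. The independence of $\psi_\gamma(X,Y)=\gamma(\tilde X_1,\tilde Y_{-1})$ from the choice of lifts (already noted in the excerpt) will let us work with a single convenient representative, and the content of \refE{alalg} will let us ignore higher order terms because the cocycle vanishes above level zero by \refP{zerobound}.

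For part (a), symmetry is essentially immediate from \refE{p1m1}. Pick the lifts $\tilde X_1 = X_1$ and $\tilde Y_{-1} = Y_{-1}$ given by \refP{locex}. Expand $X = \sum a_r X^{(r)}$ and $Y = \sum b_s Y^{(s)}$ in a basis of $\g$, use bilinearity, and apply \refE{p1m1} to the basis vectors; since $\psi_\gamma$ does not depend on the choice of lift, this transfers to arbitrary $\tilde X_1,\tilde Y_{-1}$.

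For part (b), I would apply the cocycle condition \refE{cohcoc} to the triple $(X_0, Y_1, Z_{-1})$. There are three terms. The ``middle'' term $\gamma([Y_1,Z_{-1}],X_0)$ vanishes exactly: by \refE{alalg}, $[Y_1,Z_{-1}]$ decomposes into homogeneous pieces of degree $\ge 0$, and for each such piece the pairing with $X_0 \in \gb_0$ is zero by \refC{zerodeg}. For the other two terms, I expand $[X_0,Y_1] = [X,Y]_1 + B$ with $B\in F_2$, and $[Z_{-1},X_0] = [Z,X]_{-1} + D$ with $D\in F_0$. The remainder pieces $\gamma(B,Z_{-1})$ and $\gamma(D,Y_1)$ are evaluations of $\gamma$ at pairs of level $\ge 1$, hence vanish by \refP{zerobound}(a). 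What survives is
\begin{equation}
\gamma([X,Y]_1, Z_{-1}) + \gamma([Z,X]_{-1}, Y_1) = 0,
\end{equation}
which, using the antisymmetry of $\gamma$ and the definition of $\psi_\gamma$, reads $\psi_\gamma([X,Y],Z) = \psi_\gamma(Y,[Z,X])$. Combining this ``cyclic'' identity with symmetry from part (a) yields
\begin{equation}
\psi_\gamma([X,Y],Z) = \psi_\gamma([Z,X],Y) = \psi_\gamma([Y,Z],X) = \psi_\gamma(X,[Y,Z]),
\end{equation}
which is the required invariance.

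I do not foresee a serious obstacle; the only subtlety is bookkeeping the ``higher order'' corrections produced by \refE{alalg}, but since each such correction lives at a strictly higher level than the leading term, \refP{zerobound}(a) kills all of them exactly rather than only up to higher-level congruence. This is why the proof yields genuine equalities for $\psi_\gamma$ rather than merely congruences modulo higher level.
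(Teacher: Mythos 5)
Your proposal is correct and uses essentially the same method as the paper: the cocycle identity applied to a level-zero triple of homogeneous lifts with one lift placed in degree zero, with \refC{zerodeg} killing one term and the vanishing above level zero (\refP{zerobound}) killing the higher-order corrections from \refE{alalg}. The only deviation is the degree assignment: the paper takes $(\tX_1,\tY_0,\tZ_{-1})$, so that both $[X,Y]$ and $[Y,Z]$ appear with the right degrees and invariance follows in one step without using symmetry, whereas your choice $(X_0,Y_1,Z_{-1})$ yields a cyclic identity that must then be combined with part (a) --- a harmless extra step.
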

\begin{proof}
First we have by \refE{p1m1} 
\begin{equation*}
\psi_{\ga}(X,Y)=\ga(\tX_1,\tY_{-1})= 
\ga(\tY_1,\tX_{-1})
=\psi_{\ga}(Y,X).
\end{equation*}
This is the symmetry.
Furthermore, using $[\tX_1,\tY_0]\equiv\widetilde{[X,Y]}_1$,
the fact that the cocycle vanishes for positive level, and by
the cocycle condition
\begin{multline*}
\psi_{\ga}([X,Y],Z)=\ga(\widetilde{[X,Y]}_1,\tZ_{-1})=
\ga([\tX_1,\tY_0],\tZ_{-1})=
\\
-\ga([\tY_0,\tZ_{-1}],\tX_1)
-\ga([\tZ_{-1},\tX_{1}],\tY_0).
\end{multline*}
The last term vanishes due to \refC{zerodeg}. Hence
\begin{equation*}
\psi_{\ga}([X,Y],Z)=\ga(\tX_1,[\tY_0,\tZ_{-1}])=
\ga(\tX_1,\widetilde{[Y,Z]}_{-1})=
\psi_{\ga}(X,[Y,Z]).
\end{equation*}
\end{proof}

As the cocycle $\ga$ is fixed by the 
values $\ga(L_{1}^r,L_{-1}^s)$, and they are fixed by the bilinear
map $\psi_{\ga}$ we proved:
\begin{theorem}\label{T:fixing}
Let $\gamma$ be an $\L$-invariant cocycle for $\laxg$ which is bounded 
from above by zero. Then $\ga$ is completely fixed by 
the  associated symmetric and invariant
bilinear form $\psi_\ga$ on $\g$ defined via \refE{cocar}.
\end{theorem}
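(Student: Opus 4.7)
The plan is to assemble the preceding propositions into a complete induction on the level and then to read off that the only free data is exactly the bilinear form $\psi_\ga$. Since $\ga$ is bounded from above, by \refP{zerobound}(a) we already know that $\ga(\gb_n,\gb_m)=0$ as soon as $n+m>0$. So the cocycle vanishes at every positive level. This lets us drop the weak congruence symbol $\equiv$ into an honest equality in the hypotheses of \refP{ln0} and \refP{zerodeg} whenever the ``higher level'' terms referred to therein already lie in the positive-level regime.

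First I would run induction downward on the level $l=n+m$. The base case $l>0$ is handled by \refP{zerobound}(a). For the inductive step, suppose $\ga$ vanishes on all pairs of level $>l$ for some $l\le 0$. If $l\ne 0$, then \refP{ln0} gives $\ga(L^r_m,L^s_n)\equiv 0$, which, combined with the induction hypothesis on the higher-level correction terms, yields $\ga(L^r_m,L^s_n)=0$. If $l=0$, we are left with the values $\ga(L^r_0,L^s_0)$ and $\ga(L^r_n,L^s_{-n})$ for $n\ne 0$. \refC{zerodeg} (which is \refP{zerodeg} plus what we just proved at positive level) kills $\ga(L^r_m,L^s_0)$ for $m\ge 0$, and the antisymmetry kills it for $m\le 0$ as well. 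So the only surviving data at level zero are the $\ga(L^r_n,L^s_{-n})$ with $n\ge 1$.

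Next, I would apply formula \refE{nm1}, again noting that the correction terms hidden in $\equiv$ at level zero live in strictly positive level, hence vanish. This gives the honest equality $\ga(L^r_n,L^s_{-n})=n\,\ga(L^r_1,L^s_{-1})$. Thus the entire cocycle $\ga$ is determined by the finitely many numbers $\ga(L^r_1,L^s_{-1})$, $r,s=1,\ldots,\dim\g$. By the very definition \refE{cocar} these are exactly $\psi_\ga(X^r,X^s)$, where $X^r\in\g$ is the symbol corresponding to $L^r_1$ (and equally well to $L^r_{-1}$, since the level-$1$ correction in the choice of $\tX_1,\tY_{-1}$ lands in positive level where $\ga$ vanishes). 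So $\psi_\ga$ captures all of $\ga$, and conversely every matrix element of $\ga$ is a value of $\psi_\ga$, proving the claim.

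There is really no hard step here; the theorem is a bookkeeping consequence of the material already developed. The only point that requires a bit of care is the transition from the congruence relation $\equiv$ (which tolerates higher-level corrections) to strict equalities, and this transition is exactly what boundedness from above plus the downward induction supplies. No additional input from the structure of $\g$ is needed, which is consistent with the fact that the same statement will apply uniformly to all the classical cases $\slnb(n),\sob(n),\spnb(2n),\glb(n)$ treated in the next section.
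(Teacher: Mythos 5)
Your overall strategy---assembling \refP{ln0}, \refP{zerodeg}, \refP{zerobound} and formula \refE{nm1} into a downward induction on the level---is the paper's own, and your treatment of levels $l>0$ and $l=0$ is correct. But your inductive step below level zero is broken. You take as induction hypothesis that ``$\ga$ vanishes on all pairs of level $>l$'' and conclude that $\ga(L^r_m,L^s_n)=0$ for every level $l\ne 0$. Already at $l=-1$ the hypothesis fails: the levels $>-1$ include level zero, and the level-zero values $\ga(L^r_n,L^s_{-n})$ are exactly the data that do \emph{not} vanish for a non-trivial cocycle (if they did, \refP{zerobound}(b) would force $\ga=0$). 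What \refP{ln0} actually yields at a level $l<0$ is $\ga(L^r_m,L^s_n)\equiv 0$, i.e.\ this value is a universal linear combination of values at levels $>l$; for $l<0$ those higher levels include level zero, so one may conclude only that the negative-level values are \emph{determined by} the level-zero values, not that they are zero. Moreover, the vanishing you assert is false in general: in higher genus the local $\L$-invariant cocycle $\ga_{1,\w}$ satisfies $R\le n+m\le 0$ with a strictly negative lower bound $R$, and its values at levels between $R$ and $0$ need not vanish (only in the classical graded case $\g\otimes\C[z,z^{-1}]$ is the cocycle concentrated at level zero). This is precisely why the paper's summary says the cocycle is ``uniquely given by'' its values at level zero, and why the proof of \refT{locuni} states that the values at level $l<0$ are ``fixed by induction by the values at level zero.''

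The repair is small and leaves the rest of your argument intact: change the statement proved by downward induction from ``$\ga$ vanishes at level $l$'' to ``the values of $\ga$ at level $l$ are universal linear combinations of its values at level zero.'' The base cases are the levels $l>0$ (vanishing, by the hypothesis of the theorem, resp.\ \refP{zerobound}(a)) and $l=0$, where, as you correctly argue, \refC{zerodeg}, antisymmetry and \refE{nm1} reduce everything to the finitely many numbers $\ga(L_1^r,L_{-1}^s)$, which by \refE{cocar} are exactly the values of $\psi_\ga$; the inductive step for $l<0$ is then precisely \refP{ln0} combined with the (now correctly stated) induction hypothesis. With this reformulation your conclusion---that $\ga$ is completely fixed by $\psi_\ga$---follows, and the proof coincides with the one in the paper.
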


\subsection{The case of simple Lie algebras $\g$}
$ $

By \refT{fixing} the cocycle is fixed by the associated
$\psi_{\ga}$ which is symmetric and $\L$-invariant.
For a finite-dimensional simple Lie algebra every such
form is a multiple of the Cartan-Killing form $\kappa$.
This supplies the proof of the
uniqueness of the cocycle. The existence is clear as 
$\gamma_{1,\omega}$, see \refE{ga1}, is an $\L$-invariant and local 
cocycle.
Hence, we obtain that every local and $\L$-invariant cocycle is
a scalar multiple of $\gamma_{1,\omega}$. 
By \refP{nonbound}, $\gamma_{1,\omega}$ is not  a coboundary. We obtain
\begin{proposition}\label{P:lsimp}
Let $\g$ be simple, then
\begin{equation}
\dim\H_{loc,\L}(\gb,\C)=1,
\end{equation}
and this cohomology space is generated by the class of $\gamma_{1,\omega}$.
Moreover, every $\L$-invariant cocycle which is bounded from above is local.
\end{proposition}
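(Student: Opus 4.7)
The plan is to combine the rigidity result of Theorem~\ref{T:fixing} with the classical fact that a finite-dimensional simple Lie algebra admits, up to scalar, a unique symmetric invariant bilinear form, and then to exhibit $\gamma_{1,\omega}$ as the generator. Almost all the work has already been done in the preceding subsections; the role of this proposition is to package it.

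First I would argue an upper bound at the level of cocycles. By Theorem~\ref{T:fixing}, any $\L$-invariant cocycle $\gamma$ on $\gb$ that is bounded from above is completely determined by the symmetric invariant bilinear form $\psi_\gamma$ on $\g$ defined in \refE{cocar}. When $\g$ is finite-dimensional and simple, the space of symmetric invariant bilinear forms on $\g$ is one-dimensional, spanned by the Cartan-Killing form $\kappa$. Consequently the space of $\L$-invariant cocycles bounded from above is itself at most one-dimensional. Next I would supply existence: by Proposition~\ref{P:locg1} the cocycle $\gamma_{1,\omega}$ is local (hence bounded from above) and by Proposition~\ref{P:linv}(b) it is $\L$-invariant. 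So the one-dimensional upper bound is attained and every $\L$-invariant bounded cocycle is a scalar multiple of $\gamma_{1,\omega}$. Since $\gamma_{1,\omega}$ itself is local, this immediately yields the last assertion: every $\L$-invariant cocycle bounded from above is in fact local.

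Passing to cohomology, the subspace $\H^2_{loc,\L}(\gb,\C)$ is therefore spanned by the class $[\gamma_{1,\omega}]$, and Proposition~\ref{P:nonbound} tells us this class is nonzero. This forces $\dim \H^2_{loc,\L}(\gb,\C) = 1$. There is no real obstacle to the argument: the substantial content has been absorbed into Theorem~\ref{T:fixing} (which reduces an infinite-dimensional cohomological question to a finite-dimensional one on $\g$) and into Proposition~\ref{P:nonbound} (which rules out triviality by a direct residue computation using the Cartan subalgebra element $H_{(n)} = H_0 \cdot A_n$). The only delicate point worth stating carefully in the write-up is that the determined form $\psi_\gamma$ is indeed symmetric \emph{and} $\g$-invariant, since it is exactly this combination that reduces the space of possibilities to $\C\cdot\kappa$.
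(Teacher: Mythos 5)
Your proof is correct and takes essentially the same route as the paper: Theorem~\ref{T:fixing} together with the uniqueness (up to scalar) of the symmetric invariant bilinear form on a simple Lie algebra gives the at-most-one-dimensional bound, while Propositions~\ref{P:locg1}, \ref{P:linv} and \ref{P:nonbound} supply existence, $\L$-invariance, and non-triviality of $\gamma_{1,\omega}$, exactly as in the paper. The only hair to split is that Theorem~\ref{T:fixing} is stated for cocycles bounded from above \emph{by zero}, whereas you invoke it for cocycles merely bounded from above; this is harmless because Proposition~\ref{P:zerobound}(a) shows that an $\L$-invariant cocycle bounded from above is automatically bounded from above by zero.
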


 \subsection{The case of $\laxgl(n)$}
$ $ 

First note that we have the direct decomposition, as Lie algebras,
$\bgl=\laxs(n)\oplus\laxsl(n)$. 
Let $\ga$ be a cocycle of $\laxgl(n)$ and denote by
$\ga'$ and $\ga''$ its restriction to $\laxs(n)$ and  $\laxsl(n)$
respectively. 
\begin{proposition}
\begin{equation}
\ga(x,y)=0,\quad \forall x\in \laxs(n),\ y\in \laxsl(n).
\end{equation}
\end{proposition}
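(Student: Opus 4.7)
The plan is to exploit two ingredients: scalar matrices are central in $\laxgl(n)$, so any $x\in\laxs(n)$ commutes with every $y\in\laxsl(n)$; and although $\laxsl(n)$ is not perfect, it is \emph{weakly perfect} by Lemma~\ref{L:weak}. Since the proposition sits in Section~\ref{S:induction} on $\L$-invariant cocycles, $\ga$ is implicitly an $\L$-invariant cocycle, so by Proposition~\ref{P:zerobound}(a) it vanishes on any pair of homogeneous elements of total degree~$>0$.

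First I would write out the cocycle identity \refE{cohcoc} on a triple $(x,y_1,y_2)$ with $x\in\laxs(n)$ and $y_1,y_2\in\laxsl(n)$. The brackets $[x,y_1]$ and $[y_2,x]$ vanish because scalar matrices commute with everything in $\laxgl(n)$, so the three-term identity collapses to
\begin{equation*}
\ga(x,[y_1,y_2])=0.
\end{equation*}
Hence $\ga(x,\cdot)$ annihilates the whole commutator subspace $[\laxsl(n),\laxsl(n)]$.

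Next, Lemma~\ref{L:weak} applied to the simple Lie algebra $\sln(n)$ lets me write an arbitrary $y\in\laxsl(n)$, for any chosen $m\in\Z$, as
\begin{equation*}
y=\sum_{i=1}^{l}[y^{(i,1)},y^{(i,2)}]+y',\qquad y^{(i,j)}\in\laxsl(n),\quad y'\in F_m\cap\laxsl(n).
\end{equation*}
The previous step reduces $\ga(x,y)$ to $\ga(x,y')$. Decomposing $x=\sum_j x_j$ in its (finite) homogeneous expansion with smallest degree~$j_0$, and choosing $m>-j_0$, every pair $(x_j,y'_k)$ contributing to $\ga(x,y')$ has level $j+k\ge j_0+m>0$, so each contribution vanishes by Proposition~\ref{P:zerobound}(a), giving $\ga(x,y)=0$.

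The only real obstacle is that $\laxsl(n)$ is not perfect: the cocycle identity alone eliminates brackets, not arbitrary elements. Weak perfectness allows one to push the uncontrolled remainder $y'$ to arbitrarily high degree, where the boundedness from above supplied by $\L$-invariance takes over and closes the argument.
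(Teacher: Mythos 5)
Your proof is correct and essentially identical to the paper's: both split $y$ via \refL{weak} into a sum of commutators plus a remainder of arbitrarily high degree, kill the commutator terms with the cocycle identity together with the centrality of $\laxs(n)$ in $\laxgl(n)$, and kill the remainder using boundedness from above. The only minor difference is that the paper works directly with the raw upper bound $M$ of $\ga$ (so $\L$-invariance is never actually invoked in this proof), whereas you route the final step through \refP{zerobound}(a), which does rely on $\L$-invariance; choosing $m > M - j_0$ instead of $m > -j_0$ would make your argument valid for any cocycle that is merely bounded from above.
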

\begin{proof}
Let $M$ be an upper bound for the cocycle $\ga$. Take $x$ and $y$ 
as above. In particular there is an $m$ such that 
$x$ can be written as linear combinations of basis elements of
degree $\ge m$.
By \refL{weak} there exist elements $y^{(i)}_1,y^{(i)}_2\in \laxsl(n)$,
$i=1,\ldots,k$, and $B\in \laxsl(n)$ with $B$ a linear combination
of elements of degree $\ge M-m+1$
such that $y=\sum_{i=1}^k[y^{(i)}_1,y^{(i)}_2]+B$.
Now
\begin{equation}
\ga(x,y)=\ga(x,\sum_{i=1}^k[y^{(i)}_1,y^{(i)}_2]+B)=
\sum_{i=1}^k\ga(x,[y^{(i)}_1,y^{(i)}_2])+
 \ga(x,B).
\end{equation} 
The last summand vanishes as the cocycle is bounded by $M$.
For the rest we calculate using the cocycle conditions
\begin{equation}
\ga(x,[y_1^{(i)},y_2^{(i)}])=\ga([x,y_1^{(i)}],y_2^{(i)})+
\ga([x,y_1^{(i)}],y_2^{(i)}).
\end{equation}
The commutators inside vanish 
since $sbn(n)$ and $\slnb(n)$ commute.
Hence the claim.
\end{proof}
\medskip

This proposition implies that
$\ga(x_1+y_1,x_2+y_2)=\ga(x_1,x_2)+\ga(y_1,y_2)$ for $x_1,x_2\in\laxs(n)$ and
$y_1,y_2\in\laxsl(n)$. Hence, $\ga=\ga'\oplus\ga''$.
If $\ga$ is local and/or $\L$-invariant the same is true for 
$\ga'$ and $\ga''$.

First we consider the algebra $\laxs(n)$. It is isomorphic
to $\A$, the isomorphism is given by
\begin{equation}
\laxs(n)\cong \A, \qquad
L\mapsto  \frac 1n\;\tr(L).
\end{equation}  
In \cite[Thm. 4.3]{Scocyc}
it was shown that up to rescaling the unique $\L$-invariant cocycle for
$\A$ is given by 
\begin{equation}
\gamma_{\A}(f,g)=\cins fdg=\res_{P_+}(fdg)
\end{equation}
 (here $C_S$ is a circle around the point $P_+$)

Hence,
\begin{equation}
\ga'(L,M)=\alpha\res_{P_+}(\tr(L)\cdot \tr(dM))
=\alpha\ga_2(L,M),
\end{equation}
by Definition \refE{ga2}.

For the cocycle $\ga''$ of $\laxsl(n)$ we use \refP{lsimp} and
obtain $\ga''=\beta\gamma_{1,\omega}$. Altogether we showed
\begin{proposition}\label{P:lcom}
\begin{equation}
\dim\H_{loc,\L}(\laxgl(n),\C)=2.
\end{equation}
A basis is given by the classes of $\gamma_{1,\omega}$
and $\gamma_2$.
Moreover, every $\L$-invariant cocycle which is bounded from above is local.
\end{proposition}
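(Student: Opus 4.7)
The plan is to combine the direct-sum decomposition $\bgl=\laxs(n)\oplus\slnb(n)$ with the vanishing of mixed cocycle values established in the preceding proposition, and then reduce to the two already-handled one-dimensional cases. Let $\ga$ be an $\L$-invariant cocycle on $\bgl$ which is bounded from above. By the preceding proposition,
\begin{equation*}
\ga(x_1+y_1,\,x_2+y_2)=\ga(x_1,x_2)+\ga(y_1,y_2),\qquad x_i\in\laxs(n),\ y_i\in\slnb(n),
\end{equation*}
so $\ga=\ga'\oplus\ga''$, where $\ga'$ and $\ga''$ are the restrictions to $\laxs(n)$ and $\slnb(n)$ respectively. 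Since the decomposition is by $\L$-submodules and preserves the almost-grading, both $\ga'$ and $\ga''$ are $\L$-invariant and bounded from above.

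For the simple summand $\ga''$ I would invoke \refP{lsimp} directly: since $\sln(n)$ is simple, $\ga''$ is automatically local and satisfies $\ga''=\beta\,\gamma_{1,\omega}|_{\slnb(n)}$ for a unique $\beta\in\C$. For the abelian summand I would use the isomorphism of almost-graded Lie algebras (and $\L$-modules)
\begin{equation*}
\laxs(n)\xrightarrow{\ \cong\ }\A,\qquad L\mapsto \tfrac{1}{n}\tr(L),
\end{equation*}
(recorded in \refE{sint}) to transport $\ga'$ to an $\L$-invariant bounded cocycle on $\A$. By \cite[Thm.~4.3]{Scocyc}, every such cocycle on $\A$ is a scalar multiple of $\gamma_{\A}(f,g)=\res_{P_+}(f\,dg)$, hence is local. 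Pulling back to $\laxs(n)$ and comparing with \refE{ga2} gives $\ga'=\alpha\,\gamma_2$ for a unique $\alpha\in\C$.

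Putting these together yields $\ga=\alpha\,\gamma_2+\beta\,\gamma_{1,\omega}$, which is manifestly local; this proves simultaneously that every $\L$-invariant cocycle bounded from above is local, and that the classes of $\gamma_{1,\omega}$ and $\gamma_2$ span $\H_{loc,\L}(\bgl,\C)$.

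What remains is the linear independence of these two classes modulo coboundaries, which I expect to be the only real point requiring care. I would argue as follows: suppose $\alpha\gamma_2+\beta\gamma_{1,\omega}=d\phi$ for some linear form $\phi$ on $\bgl$. Restricting to $\slnb(n)$ kills $\gamma_2$ and leaves $\beta\,\gamma_{1,\omega}|_{\slnb(n)}$ as a coboundary on $\slnb(n)$; by \refP{nonbound} this forces $\beta=0$. Restricting the remaining identity $\alpha\gamma_2=d\phi$ to $\laxs(n)$ and using the isomorphism with $\A$ reduces to the statement that $\gamma_{\A}$ is not a coboundary on $\A$, which is again \cite[Thm.~4.3]{Scocyc} (or a direct check analogous to that in \refP{nonbound}, e.g. by pairing suitable elements $A_1$ and $A_{-1}$). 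Hence $\alpha=0$, the two classes are independent, and $\dim\H_{loc,\L}(\bgl,\C)=2$.
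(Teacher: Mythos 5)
Your proposal is correct and follows essentially the same route as the paper: the decomposition $\bgl=\laxs(n)\oplus\slnb(n)$, the mixed-vanishing proposition to split $\ga=\ga'\oplus\ga''$, \refP{lsimp} for the traceless part, and \cite[Thm.~4.3]{Scocyc} via the isomorphism $\laxs(n)\cong\A$ for the scalar part. Your explicit linear-independence check (restrict to $\slnb(n)$ and invoke \refP{nonbound}, then to $\laxs(n)$ where coboundaries vanish) is a detail the paper leaves to \refP{nonbound} and \refP{boundlinv}; note only the cosmetic point that $\gamma_{1,\omega}$ does not vanish on the scalar subalgebra (indeed $\gamma_{1,\omega}(fI_n,gI_n)=n\,\res_{P_+}(f\,dg)$), so in the recombination $\ga=c_1\gamma_2+c_2\gamma_{1,\omega}$ one has $c_2=\beta$ but $c_1=\alpha-\beta/n$, which changes nothing in the conclusion.
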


\section{Uniqueness of the cohomology class for the simple case}
\label{S:direct}
By a quite different approach we will show in
this section 
that for a simple Lie algebra the space 
of local cohomology classes is at most one-dimensional.
We will not require  $\L$-invariance a priori. 
Combining this result with the result of the last section that
 for a simple Lie algebra the space of 
$\L$-invariant local cohomology classes is one-dimensional 
we see that in the simple case each local cohomology class is automatically
an $\L$-invariant cohomology class. Moreover, we showed there that 
it has a unique $\L$-invariant representing cocycle which is 
given as  a multiple of $\ga_{1,\w}$.

\begin{theorem}\label{T:locuni}
Let $\g$ be a finite-dimensional simple classical Lie algebra 
over $\C$ and
$\gb$ the associated infinite-dimensional
Lax operator  algebra with its almost-grading.
Every local cocycle on  $\gb$ is cohomologous up to rescaling 
to a uniquely defined cocycle which is bounded from above
by zero. In particular, the space of local cohomology classes
is at most one-dimensional and up to equivalence and rescaling
there is at most one non-trivial local cohomology class.
\end{theorem}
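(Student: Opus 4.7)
The strategy is to exploit the internal root-space structure of the finite-dimensional simple Lie algebra $\g$ to reduce an arbitrary local cocycle on $\gb$ (after modification by a coboundary) to one that is determined by a single scalar. Fix a Cartan subalgebra $\fh\subset\g$ with root system $\Phi$, choose simple roots, and for each $\a\in\Phi$ select root vectors $E^\a\in\g_\a$ and coroots $\ha\in\fh$, so that $\{E^\a,E^{-\a},\ha\}$ span an $\mathfrak{sl}_2$-triple. By \refP{locex}, for every $X\in\g$ and every $n\in\Z$ there is a distinguished homogeneous element $X_n\in\gb_n$; I write $E^\a_n$ and $H^\a_n$ for the corresponding lifts.

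\textbf{Step 1: normalize by a coboundary to an upper bound $0$.} Given a local cocycle $\ga$, locality fixes integers $R\le S$ with $\ga(\gb_n,\gb_m)=0$ for $n+m\notin[R,S]$. Descending from level $S$ to level $1$, define a linear functional $\phi$ on the finite-dimensional pieces at each nonzero level and replace $\ga$ by $\ga-\delta\phi$ to kill the cocycle on each positive level in turn. The almost-gradedness \refE{alalg} and \refL{weak} (weak perfectness of $\gb$ for simple $\g$) guarantee that at each level the nonzero cocycle values lie in the image of the bracket, so the obstruction to such a $\phi$ is zero. After this normalization one may assume $\ga(\gb_n,\gb_m)=0$ whenever $n+m>0$.

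\textbf{Step 2: reduce to level $0$ via the cocycle condition.} With the normalization of Step 1, show that $\ga$ at a negative level can be absorbed by a further coboundary. For a pair $(X_n,Y_m)$ with $n+m<0$, use the cocycle identity on triples of the form $(H^{\a}_0, X_n, Y_m)$ together with $[\ha,X]\ne 0$ for generic root vectors to express $\ga(X_n,Y_m)$ as a coboundary term plus contributions at strictly higher level; the latter vanish by Step 1. Iteration then shows $\ga$ is cohomologous to a cocycle supported on level zero.

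\textbf{Step 3: propagation among the level-zero values.} By \refT{fixing} (which only used the recurrence \refE{recform}, but here the analogous recurrence is produced via brackets with $e$-type elements replaced by degree-zero Cartan elements), the cocycle at level zero is determined by the bilinear form $\psi(X,Y):=\ga(X_1,Y_{-1})$ on $\g$. Apply the cocycle condition to $(H^\a_0, E^\a_1, E^{-\a}_{-1})$ and to $([E^\a,E^\b]$-triples between neighboring simple roots in the Dynkin diagram): the first identity fixes $\psi(E^\a,E^{-\a})$ as a multiple of $\psi(\ha,\ha)$, the second relates $\psi(\ha,\ha)$ to $\psi(H^\b,H^\b)$ for all simple roots $\b$ connected to $\a$. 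Simplicity of $\g$ (connectedness of the Dynkin diagram) propagates this relation throughout $\Phi$, and symmetry plus invariance (arguments as in the proof of \refT{fixing}, but derived from the cocycle identity rather than from $\L$-invariance) give off-diagonal vanishing $\psi(X,Y)=0$ when the weights of $X$ and $Y$ do not cancel. Hence $\psi$ is a symmetric invariant form on the simple $\g$, hence a multiple of the Cartan--Killing form, and $\ga$ is completely determined up to a single scalar $\ga(H^\a_1,H^\a_{-1})$.

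\textbf{Main obstacle.} The hard part is Step 1: unlike in the graded case (classical loop algebras) or the $\L$-invariant case of Section 4, here one must construct the normalizing coboundary using only the cocycle identity and the almost-graded bracket. The error terms in \refE{alalg} make the inductive construction of $\phi$ level by level delicate, because killing the cocycle at one level generally reshuffles its values at lower levels. Locality is essential: it provides the finite starting level $S$ from which the descending induction can be launched. Once Step 1 is in place, Steps 2--3 follow the template of \cite{Scocyc} adapted through the finite-dimensional root calculus, with \refL{weak} supplying the replacement for the perfectness used in the classical Krichever--Novikov setting.
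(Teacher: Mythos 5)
Your Step 1 contains the critical gap, and you have in fact pointed at it yourself without resolving it. To kill $\ga$ at an entire level $l>0$ by a coboundary you must find a linear functional $\phi$ with $\phi([X_n,Y_m])=\ga(X_n,Y_m)$ for \emph{all} pairs with $n+m=l$. By almost-grading, $[\gb_n,\gb_{l-n}]$ lies in $\gb_l\oplus\cdots\oplus\gb_{l+M}$, and in your descending induction $\phi$ is already fixed on the higher pieces; so the unknowns are the $\dim\g$ values of $\phi$ on $\gb_l$, while the equations are indexed by all $n\in\Z$, each pair $(n,l-n)$ contributing $(\dim\g)^2$ conditions. This is a massively overdetermined linear system, and its solvability requires that the level-$l$ values of $\ga$ already satisfy strong compatibility relations (roughly, that $\ga(X_n,Y_{l-n})$ depends on $n$ only through the corrections coming from higher levels). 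Weak perfectness (\refL{weak}) only says that every element of $\gb$ is a sum of brackets modulo arbitrarily high degree; it gives no control whatsoever over this consistency. So the assertion ``the obstruction to such a $\phi$ is zero'' is precisely the hard content of the theorem, not a consequence of \refL{weak}; note for contrast that at level zero the analogous system is genuinely unsolvable, since there the values grow like $n\cdot\ga(H_1^{\a},H_{-1}^{\a})$ (\ref{P:p51}), which is exactly why the cocycle is not a coboundary.

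The paper's proof avoids this by reversing the logical order. A single coboundary $\Phi$ is subtracted, and it is defined on a \emph{basis} of $\gb$ by descending induction, using the canonical decompositions $E_n^{\pm\a}=\pm\frac{1}{2}[H_0^\a,E_n^{\pm\a}]+\cdots$ and $H_n^i=[E_0^{\a_i},E_n^{-\a_i}]+\cdots$ of \refE{ehn}; since $\Phi$ is prescribed on a basis, no consistency problem can arise. The resulting cocycle is only ``normalized'' in the sense of \refD{norc}, i.e. $\ga(H_0^\a,E_n^{\pm\a})=\ga(E_0^{\a_i},E_n^{-\a_i})=0$; it is \emph{not} assumed to vanish at positive levels. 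The vanishing above level zero is then a \emph{conclusion}: the root calculus (your Step 3, which indeed matches the paper's Propositions \ref{P:p2}--\ref{P:p44}, the recursion \refE{hrec}, and \ref{P:p6}) shows that every value at level $l\neq 0$ is a universal expression in values at levels $>l$, so descending induction from the locality bound kills all positive levels with no further coboundary. Your Step 2 also overshoots: one cannot in general make the cocycle \emph{supported} at level zero, because the negative-level values are forced by the level-zero values through the cocycle identity (the local cocycle $\ga_{1,\w}$ itself has nonzero values at negative levels); the theorem only asserts boundedness from above by zero. In sum, your Step 3 reproduces the paper's propagation argument, but the proposal as written does not prove the theorem: the whole burden rests on the unproved solvability claim in Step 1, and the correct remedy is to normalize first on a basis and let the root calculus, rather than a coboundary, produce the upper bound zero.
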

\begin{remark}
We will even show the following. Let $\g$ be a simple finite-dimensional
Lie algebra and $\gb$ any associated two-point algebra 
of {\it current type}, e.g. 
a Lax operator algebra, a Krichever-Novikov current algebra
$\g\otimes \A$, a loop  algebra $\g\otimes\C[z,z^{-1}]$,
then every cocycle  bounded from above is
cohomologous to a cocycle which is fixed by 
its value at  one special pair of elements in $\gb$
(i.e. by $\ga(H^\a_1,H^{\a}_{-1})$ for one fixed simple root $\a$, see below
for the notation).
Hence in these cases the cohomology spaces are  at most  1-dimensional.
Besides the structure of $\g$ we only use the almost-gradedness of
$\gb$ with leading terms given in \refE{chevlax}.
\end{remark}

First let us recall the following facts about the Chevalley generators
of $\g$.
Choose a root space decomposition
$\g=\fh\oplus_{\alpha\in\Delta}\g^\alpha$. 
As usual $\Delta$ denotes the set of all roots
$\alpha\in\fh^*$. Furthermore, let
$\{\alpha_1,\alpha_2,\ldots,\alpha_p\}$
be a set of simple roots ($p=\dim\fh$).
With respect to this basis, all roots split into positive and negative
roots, $\Delta_+$ and $\Delta_-$
respectively. With $\a$  a positive root, $-\a$ is a negative root and
vice versa.
For $\a\in\Delta$ we have $\dim\g^\alpha=1$.
Certain elements $E^\a$, $\a\in\Delta$ and $H^\a\in \fh$ can be fixed so
that for every positive root $\a$
\begin{equation}\label{E:chev}
[E^\a,E^{-\a}]=H^\a,\qquad
[H^\a,E^{\a}]=2E^{\a},\qquad
[H^\a,E^{-\a}]=-2E^{-\a}.
\end{equation}
We use also $H^i:=H^{\a_i}$, $i=1,\ldots,p$ for the elements
assigned to the simple roots.
A vector space basis, the Chevalley basis, of $\g$ is given by
$\{E^\a, \alpha\in\Delta;\  H^i, 1\le i\le p\}$.

Denote by $(\,\, ,\,\,)$ the product on
$\fh^*$ induced by  the Cartan-Killing form of $\g$.
We have the additional relations
\begin{equation}\label{E:chevfull}
\begin{aligned}\
[H^\a,H^\b]&=0,\quad
\\
[H^\a,E^{\pm\b}]&=\pm2\frac{(\b,\a)}{(\b,\b)}E^{\pm \a},
\\
[H,E^{\a}]&=\a(H)E^\a,\quad H\in\fh,
\\
[E^\a,E^{\b}]&=
\begin{cases}
H^\a, &\a\in\Delta_+,\ \b=-\a,
\\
-H^\a, &\a\in\Delta_-,\ \b=-\a,
\\
\pm(r+1)E^{\a+\b},&\a,\ \b,\ \a+\b\in\Delta,
\\
0,&\text{otherwise.}
\end{cases}
\end{aligned}
\end{equation}
Here $r$ is the largest nonnegative integer such that $\a-r\b$
still is a root.

\medskip
As in the other parts of this article, we denote by
$E_n^\a$, $H_n^\a$ the corresponding elements in $\laxg$ of degree
$n$ for which the expansions at $P^+$ start with $E^\a z_+^n$ and
$H^\a z_+^n$ respectively.
A basis for $\laxg$ is  given by
\begin{equation}\label{E:laxb}
\{\;E_n^\a, \alpha\in\Delta;\  H_n^i, 1\le i\le p\mid\ n\in\Z\;\}.
\end{equation}
The structure equations, up to higher degree 
terms, are
\begin{equation}\label{E:chevlax}
\begin{aligned}\
[H_n^\a,H_m^\b]&\equiv0,\quad
\\
[H_n^\a,E_m^{\pm\b}]&\equiv\pm2\frac{(\b,\a)}{(\b,\b)}E_{n+m}^{\pm \b},
\\
[H_n,E_m^{\a}]&\equiv\a(H)E_{n+m}^\a,\quad H\in\fh,
\\
[E_n^\a,E_m^{\b}]&\equiv
\begin{cases}
H_{n+m}^\a, &\a\in\Delta_+,\ \b=-\a,
\\
-H_{n+m}^\a, &\a\in\Delta_-,\ \b=-\a,
\\
\pm(r+1)E_{n+m}^{\a+\b},&\a,\ \b,\ \a+\b\in\Delta,
\\
0,&\text{otherwise.}
\end{cases}
\end{aligned}
\end{equation}
Recall that the symbol $\equiv$ denotes equality up to elements of
degree higher than the sum of the
degrees of the elements under consideration.  
Here,  the elements not written down  are elements of degree $>n+m$.
Also recall that by the almost-gradedness
there exists a $K$, independent of $n$ and $m$,
such that only elements of degree $\le n+m+K$
appear.

\medskip

Let $\ga'$ be a  cocycle for $\laxg$ which is bounded from above.
For the elements in $\g$ we get
\begin{equation}
E^{\pm\a}=\pm 1/2[H^\a,E^{\pm\a}],
\quad
H^i=[E^{\alpha_i}, E^{-\alpha_i}], \ i=1,\ldots, p.
\end{equation}
Consequently, for $\laxg$ we obtain
\begin{equation}\label{E:ehn}
\begin{aligned}
E_n^{\pm\a}&=\pm 1/2[H_0^\a,E_n^{\pm\a}]+Y(n,\a),
\\
H_n^i&=[E_0^{\alpha_i}, E_n^{-\alpha_i}]+Z(n,i), \ i=1,\ldots, p.
\end{aligned}
\end{equation}
with elements $Y(n,\a)$ and $Z(n,i)$  which are
sums of elements of degree between
$n+m+1$ and $n+m+K$.
Fix a number  $M\in Z$ such that the cocycle $\ga'$ vanishes for
all levels $\ge M$.
We define a linear map $\Phi:\laxg\to\C$ by
(descending) induction on the degree of the basis elements
\refE{laxb}.
First
\begin{equation}\label{E:ehni}
\Phi(E_n^{\a}):=\Phi(H_n^i):=0,\qquad \a\in\Delta,\ i=1,\ldots,p, \quad n\ge M.
\end{equation}
Next we define inductively ($\a\in\Delta_+$)
\begin{equation}
\begin{aligned}\label{E:bel}
\Phi(E_n^{\pm \a})&:=\pm 1/2\ga'(H_0^\a,E_n^{\pm a})+\Phi(Y(n,\pm\a)),
\\
\Phi(H_n^i)&:=\ga'(E_0^{\a_i},E_n^{-\a_i})+\Phi(Z(n,i)).
\end{aligned}
\end{equation}
The cocycle  $\ga=\ga'-\delta\Phi$ is 
cohomologous to the original cocycle $\ga'$. As $\ga'$ is bounded from
above, and,  by  definition, $\Phi$ is also 
bounded from above, the cocycle $\ga$ is  bounded from above too.

By the construction of $\Phi$ we have
$\Phi([H_0^\a,E_n^{\pm\a}]=\ga'(H_0^\a,E_n^{\pm\a})$ and
$\Phi([E_0^{\a_i},E_n^{-\a_i}])=\ga'(E_0^{\a_i},E_n^{-\a_i})$.
Hence
\begin{proposition}\label{P:p1}
\begin{equation}\label{E:p1}
\ga(H_0^\a,E_n^{\pm\a})=0,\quad
\ga(E_0^{\a_i},E_n^{-\a_i})=0,\qquad \a\in\Delta_+,\
i=1,\ldots,p,\quad n\in\Z.
\end{equation}
\end{proposition}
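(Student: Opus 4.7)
\medskip

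\textbf{Proof plan.} The statement is essentially a direct verification that the cochain $\Phi$ in \refE{ehni}--\refE{bel} was engineered precisely to kill the indicated cocycle values. Recall that $\ga = \ga' - \delta\Phi$ with $\delta\Phi(X,Y) = \Phi([X,Y])$, so the task is to show
\begin{equation*}
\Phi([H_0^\a,E_n^{\pm\a}]) = \ga'(H_0^\a,E_n^{\pm\a}),\qquad
\Phi([E_0^{\a_i},E_n^{-\a_i}]) = \ga'(E_0^{\a_i},E_n^{-\a_i}).
\end{equation*}

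First I would verify that the descending induction defining $\Phi$ is well-posed. The almost-graded structure \refE{chevlax} guarantees that $Y(n,\pm\a)$ and $Z(n,i)$ from \refE{ehn} are finite linear combinations of basis elements of strict degree between $n+1$ and $n+K$. Hence, starting from the vanishing base case $n\ge M$ in \refE{ehni}, the formulas \refE{bel} inductively define $\Phi$ on all basis elements of degree $n = M-1, M-2, \ldots$, because the terms $\Phi(Y(n,\pm\a))$ and $\Phi(Z(n,i))$ appearing on the right-hand side involve only basis elements whose degree has already been processed.

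Next I would carry out the two algebraic verifications. Rewriting the first identity in \refE{ehn} as $[H_0^\a,E_n^{\pm\a}] = \pm 2\bigl(E_n^{\pm\a}-Y(n,\pm\a)\bigr)$ and applying $\Phi$ gives
\begin{equation*}
\Phi([H_0^\a,E_n^{\pm\a}]) = \pm 2\,\Phi(E_n^{\pm\a}) \mp 2\,\Phi(Y(n,\pm\a)).
\end{equation*}
Substituting the first line of \refE{bel} for $\Phi(E_n^{\pm\a})$, the two $\Phi(Y(n,\pm\a))$ contributions cancel and leave exactly $\ga'(H_0^\a,E_n^{\pm\a})$. The analogous calculation with $[E_0^{\a_i},E_n^{-\a_i}] = H_n^i - Z(n,i)$ and the second line of \refE{bel} yields $\Phi([E_0^{\a_i},E_n^{-\a_i}]) = \ga'(E_0^{\a_i},E_n^{-\a_i})$. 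Subtracting from $\ga'$ gives the claim.

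The only potential subtlety is the inductive step itself: one must be sure that $Y(n,\pm\a)$ and $Z(n,i)$ genuinely have strictly higher degree than the element on the left, so that $\Phi$ can be invoked on them without circularity, and that the boundedness of $\ga'$ makes the definitions on the base case $n\ge M$ consistent. Both follow immediately from the almost-gradedness \refE{chevlax} and the hypothesis that $\ga'$ vanishes above level $M$. Beyond this bookkeeping the argument is a line of algebra — there is no real obstacle, since $\Phi$ was constructed to produce exactly these vanishings.
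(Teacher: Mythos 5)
Your proof is correct and takes essentially the same route as the paper: the paper's own proof consists of the remark that ``by the construction of $\Phi$'' one has $\Phi([H_0^\a,E_n^{\pm\a}])=\ga'(H_0^\a,E_n^{\pm\a})$ and $\Phi([E_0^{\a_i},E_n^{-\a_i}])=\ga'(E_0^{\a_i},E_n^{-\a_i})$, whence $\ga=\ga'-\delta\Phi$ vanishes on these pairs. You simply make explicit the two cancellation computations and the well-posedness of the descending induction (both of which the paper leaves implicit), and both checks are carried out correctly.
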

\begin{definition}\label{D:norc}
A cocycle $\ga$ is called \emph{normalized} if it
fulfills \refE{p1}.
\end{definition}
Above we showed that every cocycle
bounded from above is cohomologous to a normalized one, which is
also bounded from above.
In the following we assume that our cocycle is already normalized.
\begin{proposition}\label{P:p2}
Let $H$ be an arbitrary element of $\fh$ then
\begin{equation}\label{E:p2}
\ga(E_m^\a,H_n)\equiv 0,\quad  \a\in\Delta,\ n,m\in \Z,
\end{equation}
i.e. these values are (universal) expressions of values at higher level.
\end{proposition}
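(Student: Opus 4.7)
The plan is to exploit the standard Chevalley-style identity $[H^{\a},E^{\a}]=2E^{\a}$ together with the fact that, up to higher order, $H_n$ and $H_0^{\a}$ commute in $\gb$. This will let me apply the cocycle identity to the triple $(H_0^{\a},E_m^{\a},H_n)$ and read off $\ga(E_m^{\a},H_n)$ from a term that already vanishes by the normalization \refE{p1}.

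First I would write down the cocycle identity
\begin{equation*}
\ga([H_0^{\a},E_m^{\a}],H_n)+\ga([E_m^{\a},H_n],H_0^{\a})+\ga([H_n,H_0^{\a}],E_m^{\a})=0,
\end{equation*}
and examine each term at level $n+m$. By the first line of \refE{chevlax}, the bracket $[H_n,H_0^{\a}]$ has vanishing leading term, hence lies in $F_{n+1}$; paired with $E_m^{\a}$ this gives a level strictly bigger than $n+m$, so the third summand is $\equiv 0$. The third line of \refE{chevlax} gives $[E_m^{\a},H_n]\equiv -\a(H)\,E_{n+m}^{\a}$, and the second line gives $[H_0^{\a},E_m^{\a}]\equiv 2E_m^{\a}$. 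Substituting leaves
\begin{equation*}
2\ga(E_m^{\a},H_n)\ \equiv\ \a(H)\,\ga(H_0^{\a},E_{n+m}^{\a}).
\end{equation*}

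Now I would invoke normalization (Proposition~\ref{P:p1}) to see that the right-hand side vanishes. Strictly, Proposition~\ref{P:p1} is stated for $\a\in\Delta_+$; for $\a\in\Delta_-$ I would write $\a=-\beta$ with $\beta\in\Delta_+$ and use the Chevalley-basis identity $H_0^{-\beta}=-H_0^{\beta}$ to transfer the vanishing. Since $H$ is arbitrary in $\fh$ and $H_n$ is uniquely determined by its leading term (\refP{locex}), the resulting congruence $\ga(E_m^{\a},H_n)\equiv 0$ is linear in $H$ and hence holds for every $H\in\fh$, giving the claim.

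The step requiring the most care is the bookkeeping of what ``$\equiv$'' means when the various brackets produce terms in $F_{n+m+1}$: I must verify that each such higher-degree correction, when inserted back into $\ga$, yields a cocycle value paired with an element whose total degree is $>n+m$, so that it really is absorbed into the universal higher-level expressions allowed by the induction framework of \refS{induction}. Apart from this, the argument is a direct eigenvalue-decomposition trick on the cocycle with respect to $\operatorname{ad}H_0^{\a}$, completely analogous to the classical Garland computation, made possible precisely because normalization kills the one obstructing term $\ga(H_0^{\a},E_{n+m}^{\a})$.
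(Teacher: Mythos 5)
Your proof is correct and is essentially the paper's own argument: the same cocycle identity on the triple $(H_0^{\a},E_m^{\a},H_n)$, the same use of \refE{chevlax} to identify the leading terms, and the same appeal to the normalization \refE{p1} to kill the term $\ga(H_0^{\a},E_{n+m}^{\a})$ (the paper writes the coefficient as $\a(H^\a)\ne 0$ rather than $2$, which is the same thing). Your explicit handling of negative roots via $H_0^{-\b}=-H_0^{\b}$ is a detail the paper leaves implicit, but it is not a different route.
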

\begin{proof}
We start from the cocycle relation
\begin{equation}
\ga([H_n,H_0^{\a}],E_m^\a)
+
\ga([H_0^{\a},E_m^\a],H_n)
+
\ga([E_m^\a,H_n],H_0^{\a})=0.
\end{equation}
The commutator in the first term is of higher level. Hence
using the relations \refE{chevlax} we obtain
\begin{equation}
\a(H^\a)\ga(E_m^\a,H_n)+\a(H)\ga(E_{m+n}^\a,H_0^{\a})\equiv 0.
\end{equation}
By \refE{p1} the last term vanishes. As $\a(H^\a)\ne 0$
the claim follows.
\end{proof}
\begin{proposition}\label{P:p3}
Let $\a$ and $\b$ be roots such that  $\b\ne -\a$, then
\begin{equation}\label{E:p3}
\ga(E_m^\a,E_n^\b)\equiv 0,\quad  n,m\in \Z,
\end{equation}
i.e. they are (universal) expressions of values at higher level.
\end{proposition}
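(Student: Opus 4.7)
The plan is to mimic the strategy used in \refP{p2}: write down the cocycle condition for a carefully chosen triple whose commutators are easy to evaluate modulo higher-level terms, and then solve for $\ga(E_m^\a,E_n^\b)$. The natural triple to try is $(H_0, E_m^\a, E_n^\b)$ where $H$ ranges over the Cartan subalgebra $\fh$ of $\g$.

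Using the almost-graded structure equations \refE{chevlax}, we have $[H_0, E_m^\a]\equiv \a(H)\,E_m^\a$ and $[H_0, E_n^\b]\equiv \b(H)\,E_n^\b$, so the cocycle identity
\begin{equation*}
\ga([H_0,E_m^\a],E_n^\b)+\ga([E_m^\a,E_n^\b],H_0)+\ga([E_n^\b,H_0],E_m^\a)=0
\end{equation*}
together with antisymmetry rewrites at level $m+n$ as
\begin{equation*}
(\a(H)+\b(H))\,\ga(E_m^\a,E_n^\b)+\ga([E_m^\a,E_n^\b],H_0)\equiv 0.
\end{equation*}
By the relations \refE{chevlax}, the bracket $[E_m^\a,E_n^\b]$ is, up to elements of strictly higher degree, either $0$ (when $\a+\b\notin\Delta$) or a scalar multiple of $E_{m+n}^{\a+\b}$ (when $\a+\b\in\Delta$; note we have excluded $\b=-\a$, so no $H$-term appears). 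In either case \refP{p2} applied to the leading term gives $\ga([E_m^\a,E_n^\b],H_0)\equiv 0$ at level $m+n$, while the higher-degree contributions are automatically values of $\ga$ at higher level.

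Thus $(\a(H)+\b(H))\,\ga(E_m^\a,E_n^\b)\equiv 0$, and since $\b\neq -\a$ we have $\a+\b\neq 0$ in $\fh^*$, so we can choose $H\in\fh$ with $(\a+\b)(H)\neq 0$. Dividing by this nonzero scalar yields $\ga(E_m^\a,E_n^\b)\equiv 0$, which is the claim. The only subtle point that needs a careful check is the bookkeeping of ``higher level'': one must verify that when $[E_m^\a,E_n^\b]$ is expanded in the basis \refE{laxb}, the level $m+n$ part of the resulting cocycle value is indeed killed by \refP{p2} (a pair of type $(E,H)$), and every other term sits at level $>m+n$; this is immediate from \refT{almgrad} and the normalization hypothesis, and I expect it to pose no real obstacle.
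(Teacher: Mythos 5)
Your proposal is correct and follows essentially the same route as the paper: both start from the cocycle identity for the triple $(E_m^\a, H_0, E_n^\b)$, use the structure relations \refE{chevlax} to extract the factor $(\a+\b)(H)$, dispose of the term $\ga([E_m^\a,E_n^\b],H_0)$ via \refP{p2} when $\a+\b\in\Delta$ (and via $[E^\a,E^\b]=0$ otherwise), and conclude by choosing $H$ with $(\a+\b)(H)\ne 0$, which is possible precisely because $\b\ne-\a$. The level bookkeeping you flag as the "only subtle point" is handled in the paper exactly as you describe, so there is no gap.
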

\begin{proof}
Let $H$ be an arbitrary element of $\fh$.
Again we start from the cocycle relation
\begin{equation}
\ga([E_m^\a,H_0],E_n^\b)+
\ga([H_0,E_n^\b],E_m^\a)+
\ga([E_n^\b,E_m^\a],H_0)=0.
\end{equation}
Here, the third term is of higher level. If
$\a+\b\in\Delta$ this follows from \refE{p2}. If
$\a+\b\notin\Delta$ then $[E^\a,E^\b]=0$ and the degree 
of $[E^\a_n,E^\b_m]$ is bigger than $m+n$.

For the first two terms we find  (using
\refE{chevlax})
\begin{equation}
(\a+\b)(H)\ga(E_n^\b,E_m^\a)\equiv 0.
\end{equation}
As we can choose $H$ such that $(\a+\b)(H)\ne 0$ we get the
claim.
\end{proof}

Consider the cocycle relation
\begin{equation}
\ga([E_0^\a,E_n^{-\a}],H_m^\b)+
\ga([E_n^{-\a},H_m^\b],E_0^\a)+
\ga([H_m^\b,E_0^\a],E_n^{-\a})=0.
\end{equation}
Using \refE{chevlax} and ignoring higher levels we obtain
for positive roots $\a$ and $\b$
\begin{equation}\label{E:ct1}
\ga(H_n^\a,H_m^\b)+2\frac {(\a,\b)}{(\a,\a)}
\left(\ga(E_{n+m}^{-\a},E_0^\a)+
\ga(E_{m}^{\a},E_n^{-\a})\right)\equiv 0.
\end{equation}
\begin{proposition}\label{P:ct3}
Let $\a$ be a simple root, then
\begin{equation}
\ga(E_n^{-\a},E_m^{\a})\equiv \frac 12\ga(H_n^\a,H_m^\a).
\end{equation}
\end{proposition}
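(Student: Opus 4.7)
The plan is to specialize the identity \refE{ct1} to the case $\b=\a$, exploit the fact that $\a$ is simple so that the normalization condition \refE{p1} applies, and then use antisymmetry to flip the remaining term into the desired form.

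More concretely, first I would set $\b=\a$ in \refE{ct1}. Then $(\a,\b)/(\a,\a) = 1$, so the relation becomes
\begin{equation*}
\ga(H_n^\a,H_m^\a)+2\bigl(\ga(E_{n+m}^{-\a},E_0^\a)+\ga(E_m^\a,E_n^{-\a})\bigr)\equiv 0.
\end{equation*}
Next, since $\a$ is a simple root, normalization \refE{p1} gives $\ga(E_0^\a,E_{n+m}^{-\a})=0$, and by antisymmetry this means $\ga(E_{n+m}^{-\a},E_0^\a)=0$ as well. Hence the displayed relation simplifies to
\begin{equation*}
\ga(H_n^\a,H_m^\a)+2\,\ga(E_m^\a,E_n^{-\a})\equiv 0.
\end{equation*}
Finally, applying antisymmetry of the cocycle once more to rewrite $\ga(E_m^\a,E_n^{-\a}) = -\ga(E_n^{-\a},E_m^\a)$ and solving yields the claim
\begin{equation*}
\ga(E_n^{-\a},E_m^\a)\equiv \tfrac 12\ga(H_n^\a,H_m^\a).
\end{equation*}

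There is no serious obstacle here; the step that requires a bit of care is ensuring that the normalization condition really kills $\ga(E_{n+m}^{-\a},E_0^\a)$, which is exactly why the assumption that $\a$ is simple is crucial (the normalization in \refD{norc} is stated only for simple roots). For a non-simple positive root $\a$ the analogous argument would fail at this point and one would need a further reduction (which is presumably carried out in subsequent propositions). All other manipulations are immediate consequences of the cocycle identity \refE{ct1}, antisymmetry, and the $\equiv$-calculus already set up in \refS{induction}.
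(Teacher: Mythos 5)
Your proof is correct and follows essentially the same route as the paper: specialize \refE{ct1} to $\b=\a$, kill the term $\ga(E_{n+m}^{-\a},E_0^\a)$ using the normalization \refE{p1} (valid precisely because $\a$ is simple), and conclude by antisymmetry. Your write-up merely spells out the antisymmetry flips that the paper leaves implicit, and your closing remark about why simplicity of $\a$ is essential matches the paper's later reduction of non-simple roots in \refP{p11} and \refP{p33}.
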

\begin{proof}
Take the same simple root for $\a$ and $\beta$ in \refE{ct1}. By
\refP{p1} 
\newline 
$\ga(E^{-\a}_{n+m},E_0^\a)=0$ and the claim follows.
\end{proof}
Combining \refE{ct1} and \refP{ct3} we obtain
\begin{equation}\label{E:ct2}
\ga(H_n^\a,\H_m^\b)\equiv\frac {(\a,\b)}{(\a,\a)}
\ga(H_n^\a,\H_m^\a)
\end{equation}
for a 
simple root $\a$ and an arbitrary root $\b$.
\begin{proposition}\label{P:p11}
Let $\a$ be a positive root and $\alpha_1$ a simple root
such that $\a+\a_1$ is again a root then
\begin{equation}
\ga(E_m^{\a+\a_1},E_n^{-(\a+\a_1)})\equiv
s_{\a,\a_1}\cdot \ga(E_m^{\a},E_n^{-\a}),
\end{equation}
with a constant $s_{\a,\a_1}\ne 0$.
\end{proposition}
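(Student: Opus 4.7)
\smallskip

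The strategy is to produce $E_m^{\alpha+\alpha_1}$ as a commutator of Chevalley generators and then apply the cocycle identity. Since $\alpha+\alpha_1\in\Delta$, the fourth line of \refE{chevlax} gives
\begin{equation*}
[E_0^{\alpha_1},E_m^{\alpha}]\ \equiv\ \pm(r+1)\,E_m^{\alpha+\alpha_1},
\end{equation*}
where $r\ge 0$ is the largest integer such that $\alpha-r\alpha_1$ is still a root; in particular $r+1\ne 0$. I would therefore evaluate the cocycle identity on the triple $(E_0^{\alpha_1},E_m^{\alpha},E_n^{-(\alpha+\alpha_1)})$:
\begin{equation*}
\gamma([E_0^{\alpha_1},E_m^{\alpha}],E_n^{-(\alpha+\alpha_1)})+\gamma([E_m^{\alpha},E_n^{-(\alpha+\alpha_1)}],E_0^{\alpha_1})+\gamma([E_n^{-(\alpha+\alpha_1)},E_0^{\alpha_1}],E_m^{\alpha})=0.
\end{equation*}

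Each commutator is computed from \refE{chevlax}, with all contributions of higher degree swept into the $\equiv$ relation. The first term becomes $\pm(r+1)\,\gamma(E_m^{\alpha+\alpha_1},E_n^{-(\alpha+\alpha_1)})$. The sum $\alpha+(-(\alpha+\alpha_1))=-\alpha_1$ is again a root, so $[E_m^{\alpha},E_n^{-(\alpha+\alpha_1)}]\equiv \pm(s+1)E_{m+n}^{-\alpha_1}$, and the second term is $\equiv \pm(s+1)\gamma(E_{m+n}^{-\alpha_1},E_0^{\alpha_1})$; this vanishes modulo higher level, because by \refP{p1} applied to the simple root $\alpha_1$ and by antisymmetry, $\gamma(E_{m+n}^{-\alpha_1},E_0^{\alpha_1})=-\gamma(E_0^{\alpha_1},E_{m+n}^{-\alpha_1})=0$. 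Finally $-(\alpha+\alpha_1)+\alpha_1=-\alpha\in\Delta$, so $[E_n^{-(\alpha+\alpha_1)},E_0^{\alpha_1}]\equiv\pm(t+1)E_n^{-\alpha}$ for some $t\ge 0$, and the third term equals $\pm(t+1)\gamma(E_n^{-\alpha},E_m^{\alpha})\equiv \mp(t+1)\gamma(E_m^{\alpha},E_n^{-\alpha})$ by antisymmetry.

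Substituting these into the cocycle identity yields
\begin{equation*}
\pm(r+1)\,\gamma(E_m^{\alpha+\alpha_1},E_n^{-(\alpha+\alpha_1)})\ \equiv\ \pm(t+1)\,\gamma(E_m^{\alpha},E_n^{-\alpha}),
\end{equation*}
which proves the claim with $s_{\alpha,\alpha_1}=\pm(t+1)/(r+1)$. The only point requiring genuine argument beyond bookkeeping is the non-vanishing of this constant, i.e.\ that the integers $r+1$ and $t+1$ from the corresponding $\alpha_1$-strings through $\alpha$ and through $-(\alpha+\alpha_1)$ are positive; but this is built into the definition of those integers (they are nonnegative) and the hypothesis that each commutator is nonzero, i.e.\ that $\alpha+\alpha_1$ and $-\alpha$ lie in $\Delta$, which is precisely what is assumed. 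The main potential obstacle, namely the appearance of a $\gamma(E_{m+n}^{-\alpha_1},E_0^{\alpha_1})$ term that would spoil the proportionality, is eliminated exactly by the preceding normalization \refP{p1}; this is why simpleness of $\alpha_1$ is essential.
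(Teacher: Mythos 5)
Your proof is correct, and it runs on the same engine as the paper's own proof: a single application of the cocycle identity to a triple built around a degree-zero simple-root generator, the Chevalley relation in \refE{chevlax} with its nonzero factor $\pm(r+1)$, and the normalization \refP{p1} to discard the one cross term. The only difference is the choice of triple: you use $(E_0^{\a_1},E_m^{\a},E_n^{-(\a+\a_1)})$, while the paper uses the mirror image $(E_m^{\a+\a_1},E_0^{-\a_1},E_n^{-\a})$. This difference is not purely cosmetic, and it is worth recording which way it cuts. In your version the discarded term is $\ga(E_{m+n}^{-\a_1},E_0^{\a_1})=-\ga(E_0^{\a_1},E_{m+n}^{-\a_1})$, which vanishes by the normalization \refE{p1} \emph{verbatim} (positive simple root in degree zero), so the citation of \refP{p1} is exactly what is needed. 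In the paper's version the discarded term is $\ga(E_{n+m}^{\a_1},E_0^{-\a_1})$, i.e.\ the transposed configuration with the negative simple root in degree zero, which \refE{p1} does not literally cover; to see that this term is of higher level one needs a supplementary step, for instance antisymmetry combined with \refP{ct3} and the relation $\ga(H_k^{\a},H_0^{\a})\equiv 0$ of \refP{p5} (whose derivation does not use the present proposition, so there is no circularity, but which appears only later in the text). So your mirrored choice of triple gives the tighter argument: every term you discard is killed by exactly the hypothesis you cite.
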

\begin{proof}
We consider the cocycle relation
\begin{equation}
\ga([E_m^{\a+\a_1},E_0^{-\a_1}],E_n^{-\a})+
\ga([E_0^{-\a_1},E_n^{-\a}],E_m^{\a+\a_1})+
\ga([E_n^{-\a},E_m^{\a+\a_1}],E_0^{-\a_1})=0.
\end{equation}
As $\a_1$ is a simple root we can apply \refP{p1} and see that the
third term is of higher level. For the first two terms we use
\refE{chevlax}, namely the Chevalley relation involving $r$.
Since $r+1\ne 0$
in \refE{chevlax}, the claim follows.
\end{proof}
\begin{proposition}\label{P:p22}
Let $\alpha$ and $\beta$ be two simple roots. Then
\begin{equation}\label{E:2sn}
\ga(H_n^\a,H_m^\a)\equiv \frac {(\a,\a)}{(\b,\b)}
\ga(H_n^{\b},H_{m}^\b).
\end{equation}
\end{proposition}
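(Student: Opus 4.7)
The plan is to derive \refE{2sn} by combining the relation \refE{ct2} with itself (after swapping the roles of the two simple roots) and using the antisymmetry of $\gamma$, then handling the case $(\alpha,\beta)=0$ via connectedness of the Dynkin diagram.

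First I would apply \refE{ct2} in two different ways. With $\alpha$ playing the role of the simple root and $\beta$ being arbitrary (in particular simple), we get
\begin{equation*}
\gamma(H_n^\alpha, H_m^\beta) \equiv \frac{(\alpha,\beta)}{(\alpha,\alpha)}\,\gamma(H_n^\alpha, H_m^\alpha).
\end{equation*}
But since $\beta$ is also a simple root, \refE{ct2} applies equally well with $\beta$ as the distinguished simple root:
\begin{equation*}
\gamma(H_m^\beta, H_n^\alpha) \equiv \frac{(\beta,\alpha)}{(\beta,\beta)}\,\gamma(H_m^\beta, H_n^\beta).
\end{equation*}
Antisymmetry of $\gamma$ gives $\gamma(H_n^\alpha,H_m^\beta) = -\gamma(H_m^\beta,H_n^\alpha)$, and similarly $\gamma(H_m^\beta,H_n^\beta) = -\gamma(H_n^\beta,H_m^\beta)$. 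Combining the two displayed congruences and cleaning up signs leads to
\begin{equation*}
\frac{(\alpha,\beta)}{(\alpha,\alpha)}\,\gamma(H_n^\alpha,H_m^\alpha) \;\equiv\; \frac{(\alpha,\beta)}{(\beta,\beta)}\,\gamma(H_n^\beta,H_m^\beta).
\end{equation*}

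If $(\alpha,\beta)\ne 0$, dividing through gives exactly \refE{2sn}. This is the generic situation and the computation above handles it directly.

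The remaining case, $(\alpha,\beta)=0$, is where I would use the hypothesis that $\g$ is simple. In that case the Dynkin diagram is connected, so there is a chain of simple roots $\alpha=\alpha_{i_0},\alpha_{i_1},\ldots,\alpha_{i_k}=\beta$ with $(\alpha_{i_j},\alpha_{i_{j+1}})\ne 0$ at each step. Applying the previously established relation successively along this chain, one gets
\begin{equation*}
\gamma(H_n^\alpha,H_m^\alpha) \equiv \frac{(\alpha,\alpha)}{(\alpha_{i_1},\alpha_{i_1})}\gamma(H_n^{\alpha_{i_1}},H_m^{\alpha_{i_1}}) \equiv \cdots \equiv \frac{(\alpha,\alpha)}{(\beta,\beta)}\gamma(H_n^\beta,H_m^\beta),
\end{equation*}
the intermediate factors telescoping. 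This finishes the proof.

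The only mildly delicate point is verifying that antisymmetry is compatible with the congruence symbol $\equiv$; but since $\gamma(x,y)=-\gamma(y,x)$ holds as a genuine equality, any cocycle values at higher level on one side correspond to the negatives of such values on the other, so the congruence is preserved. I do not expect any real obstacle beyond being careful with signs and with the bookkeeping of which variables live at which level.
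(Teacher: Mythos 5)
Your proof is correct and follows essentially the same route as the paper: two applications of \refE{ct2} with the roles of $\a$ and $\b$ exchanged, combined via skew-symmetry of $\ga$ and symmetry of $(\,\cdot\,,\cdot\,)$, then the chain argument through the irreducible root system when $(\a,\b)=0$. The sign bookkeeping and the compatibility of antisymmetry with $\equiv$ both check out, so there is nothing to add.
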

\begin{proof}
Let $\a$ and $\b$ be two simple roots. By \refE{ct2}
\[ \ga(H_n^\a,H_m^\b)\equiv
\frac {(\a,\b)}{(\a,\a)} \ga(H_n^\a,H_m^\a)
\]
and similarly
\[  \ga(H_m^\b,H_n^\a)\equiv
\frac {(\b,\a)}{(\b,\b)} \ga(H_m^\b,H_n^\b).
\]
Since $\ga$ is skew-symmetric and $(.,.)$ is symmetric, we find
\begin{equation}
\frac {(\a,\b)}{(\a,\a)} \ga(H_n^\a,H_m^\a)\equiv
\frac {(\a,\b)}{(\b,\b)} \ga(H_n^\b,H_m^\b).
\end{equation}
If $(\a,\b)\ne 0$ we obtain directly
\refE{2sn}.
If not then by the irreducibility of the root system
we can always find a chain of simple roots $\a^{(j)}, j=0,\ldots, k$ with
 $\a^{(0)}=\a$, $\a^{(k)}=\b$ and $(\a^{(j)},\a^{(j+1)})\ne 0$.
Evaluating the pairwise results along this chain proves the claim.
\end{proof}
\begin{proposition}\label{P:p33}
Let $\a_1$ be a fixed simple root and $\a$ an
arbitrary positive root, then
\begin{equation}
\ga(E_m^{\a},E_n^{-\a})\equiv
s_{\a,\a_1}\cdot \ga(E_m^{\a_1},E_n^{-\a_1})
\equiv
t_{\a,\a_1}\cdot \ga(H_m^{\a_1},H_n^{\a_1}),
\end{equation}
with  constants $s_{\a,\a_1},t_{\a,\a_1},\ne 0$.
\end{proposition}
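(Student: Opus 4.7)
The plan is to prove both equivalences simultaneously by induction on the height of the positive root $\alpha$, using the previous propositions of this section as the building blocks.

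First I would handle the base case where $\alpha$ is itself a simple root. Applying Proposition~\ref{P:ct3} to $\alpha$ and using antisymmetry of $\ga$ gives $\ga(E_m^{\alpha},E_n^{-\alpha})\equiv -\tfrac12\ga(H_n^{\alpha},H_m^{\alpha})$, and analogously for $\alpha_1$. Next, Proposition~\ref{P:p22} moves between the two simple roots via $\ga(H_n^{\alpha},H_m^{\alpha})\equiv \tfrac{(\alpha,\alpha)}{(\alpha_1,\alpha_1)}\ga(H_n^{\alpha_1},H_m^{\alpha_1})$. Chaining these three congruences yields $\ga(E_m^{\alpha},E_n^{-\alpha})\equiv \tfrac{(\alpha,\alpha)}{(\alpha_1,\alpha_1)}\ga(E_m^{\alpha_1},E_n^{-\alpha_1})$ with a nonzero constant, and the middle term $-\tfrac12\ga(H_n^{\alpha_1},H_m^{\alpha_1})$ provides the second equivalence with a nonzero $t_{\alpha,\alpha_1}$.

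For the inductive step I would use the standard fact from the theory of root systems that every non-simple positive root $\alpha$ can be written as $\alpha=\alpha'+\alpha_i$ with $\alpha_i$ simple and $\alpha'$ a positive root of strictly smaller height. Proposition~\ref{P:p11} then supplies $\ga(E_m^{\alpha},E_n^{-\alpha})\equiv s_{\alpha',\alpha_i}\cdot\ga(E_m^{\alpha'},E_n^{-\alpha'})$ with a nonzero constant. The inductive hypothesis applied to $\alpha'$ concludes the first equivalence with nonzero multiplicative constant $s_{\alpha,\alpha_1}=s_{\alpha',\alpha_i}\cdot s_{\alpha',\alpha_1}$. The second equivalence then follows by the simple-root case, chained through the $\alpha_1$ hub.

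The only potentially delicate point is bookkeeping that no constant along the chain vanishes: the scalars extracted from Proposition~\ref{P:p22} are ratios of inner products $(\beta,\beta)/(\beta',\beta')$ of roots (nonzero), the coefficient from Proposition~\ref{P:p11} is nonzero by the Chevalley factor $\pm(r+1)$, and the contributions from Proposition~\ref{P:ct3} are the nonzero factor $-\tfrac12$. Hence the accumulated constants $s_{\alpha,\alpha_1}$ and $t_{\alpha,\alpha_1}$ are automatically nonzero, which is the content of the proposition.
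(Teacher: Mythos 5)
Your proof is correct and takes essentially the same route as the paper: the paper also reduces $\ga(E_m^{\a},E_n^{-\a})$ to a simple root by "repeated application" of \refP{p11} (your induction on the height of $\a$ is just a careful formalization of that repetition), and then combines \refP{ct3} and \refP{p22} to pass to the fixed simple root $\a_1$. Your explicit tracking of the nonzero constants and of the standard root-system fact that a non-simple positive root splits as $\a'+\a_i$ with $\a'$ a positive root makes the argument slightly more detailed than the paper's, but it is the same proof.
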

\begin{proof}
As $\a$ is a positive root it is a non-trivial sum of simple
roots.  Let $\a_2$ be  one of those.
Repeated application of  
\refP{p11} yields that the 
value $\ga(E_m^{\a},E_n^{-\a})$ can be reduced to
$\ga(E_m^{\a_2},E_n^{-\a_2})$. Combining 
 Propositions
\ref{P:ct3} and \ref{P:p22} gives the claimed dependence on
$\ga(E_m^{\a_1},E_n^{-\a_1})$ and $\ga(H_m^{\a_1},H_n^{\a_1})$.
\end{proof}
\begin{proposition}\label{P:p44}
Fix  a simple root $\a_1$ and let $\a$ and  
$\beta$  be arbitrary roots then
\begin{equation}
\ga(H_n^{\a},H_{m}^\beta)\equiv
s_{\a,\b}\cdot \ga(H_n^{\a_1},H_{m}^{\alpha_1}), \quad \text{with }
s_{\a,\b}\in\C.
\end{equation}
\end{proposition}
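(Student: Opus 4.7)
The plan is to reduce the statement to the already-established case where both Cartan generators are indexed by simple roots, by using the linearity of the map $X\mapsto X_n$ from $\g$ to $\gb_n$. Observe that by the uniqueness assertion in \refP{locex}, the assignment $X\mapsto X_n$ is $\mathbb{C}$-linear: if $X=aY+bZ$, then $aY_n+bZ_n$ lies in $\gb_n$ with leading term $(aY+bZ)z_+^n$, and so agrees with $X_n$. In particular, since $\{H^{\alpha_i}\mid i=1,\dots,p\}$ is a basis of $\fh$ and $H^\alpha,H^\beta\in\fh$ for any roots $\alpha,\beta$, we may write
\begin{equation*}
H^\alpha=\sum_{i=1}^{p}c_i^{(\alpha)}H^{\alpha_i},\qquad
H^\beta=\sum_{j=1}^{p}c_j^{(\beta)}H^{\alpha_j},
\end{equation*}
with scalars $c_i^{(\alpha)},c_j^{(\beta)}\in\C$, and hence $H_n^\alpha=\sum_i c_i^{(\alpha)}H_n^{\alpha_i}$ and $H_m^\beta=\sum_j c_j^{(\beta)}H_m^{\alpha_j}$.

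Next, bilinearity of $\gamma$ gives
\begin{equation*}
\gamma(H_n^\alpha,H_m^\beta)=\sum_{i,j}c_i^{(\alpha)}c_j^{(\beta)}\,\gamma(H_n^{\alpha_i},H_m^{\alpha_j}).
\end{equation*}
For each term, since $\alpha_i$ is a simple root, equation \refE{ct2} yields
$\gamma(H_n^{\alpha_i},H_m^{\alpha_j})\equiv \tfrac{(\alpha_i,\alpha_j)}{(\alpha_i,\alpha_i)}\gamma(H_n^{\alpha_i},H_m^{\alpha_i})$,
and then \refP{p22} converts the diagonal value to
$\gamma(H_n^{\alpha_i},H_m^{\alpha_i})\equiv \tfrac{(\alpha_i,\alpha_i)}{(\alpha_1,\alpha_1)}\gamma(H_n^{\alpha_1},H_m^{\alpha_1})$.
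Combining, we obtain
\begin{equation*}
\gamma(H_n^\alpha,H_m^\beta)\;\equiv\;
\left(\frac{1}{(\alpha_1,\alpha_1)}\sum_{i,j}c_i^{(\alpha)}c_j^{(\beta)}(\alpha_i,\alpha_j)\right)
\gamma(H_n^{\alpha_1},H_m^{\alpha_1}),
\end{equation*}
which is the claim with the scalar in parentheses playing the role of $s_{\alpha,\beta}$.

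There is essentially no serious obstacle, since the argument reduces the statement to the two previous lemmas via a linear-algebraic decomposition in $\fh$. The only point requiring a brief justification is that the $\equiv$-congruences chain correctly through a finite linear combination: because the number $p$ of simple roots is fixed independently of $n,m$, the finite sum of terms of level strictly above $n+m$ is again of level strictly above $n+m$, so the passage from the pointwise $\equiv$ relations to the summed one preserves the meaning of $\equiv$ introduced at the start of \refS{induction}.
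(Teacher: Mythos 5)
Your proposal is correct and takes essentially the same route as the paper's proof: the paper likewise decomposes $H^\a$ and $H^\b$ in the basis $\{H^{\a_i}\}$ of the Cartan subalgebra, extends this decomposition to the elements $H^\a_n$, and concludes by bilinearity of the cocycle together with \refP{p22} and Equation \refE{ct2}. Your additional justifications --- the linearity of $X\mapsto X_n$ via the uniqueness in \refP{locex}, and the compatibility of $\equiv$ with finite linear combinations --- merely spell out steps the paper leaves implicit.
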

\begin{proof}
As the $H^{\a_i}$, $i=1,\ldots,p$ form  a basis of
the Cartan subalgebra $\fg$,   every element
$H^\a$ is a linear combination of them. This extends to the
elements $H^\a_n$. By the bilinearity of the cocycle, 
\refP{p22} and Equation \refE{ct2} the claim follows.
\end{proof}
Let us summarize the results obtained in Propositions
\ref{P:p1}, \ref{P:p3}, \ref{P:p33}, and \ref{P:p44}.
\begin{proposition}\label{P:summary}
Let $\a_1$ be a fixed simple root and $\ga$ the
above defined cocycle, then for all $n,m\in\Z$
\begin{equation}
\begin{aligned}
\ga(E_m^\a,H_n)&\equiv 0, \qquad\qquad  H\in\fh,\  \a\in\Delta
\\
\ga(E_m^\a,E_n^\b)&\equiv 0, \qquad \qquad \a,\b\in\Delta, \ \b\ne-\a,
\\
\ga(E_m^\a,E_n^{-\a})&\equiv s\ga(H_m^{\a_1},H_n^{\a_1}), \qquad
\a\in\Delta,
\\
\ga(H_m^\a,H_n^{\b})&\equiv
t\ga(H_m^{\a_1},H_n^{\a_1}), \qquad \a,\b\in\Delta_+,
\end{aligned}
\end{equation}
with $s,t\in\C$.
\end{proposition}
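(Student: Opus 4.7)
The statement is declared as a summary and each of its four items is, essentially verbatim, the content of one of the previously established propositions, so my plan is to verify this identification and collect the results without any new calculation.

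First I would note that we are working under the standing assumption of the section that $\gamma$ is a local (hence bounded from above) cocycle which has been replaced by a cohomologous normalized representative in the sense of \refD{norc}. Consequently \refE{p1} holds, which is the input used in all subsequent propositions of this section. With that setup fixed, item~1, $\gamma(E_m^\alpha, H_n) \equiv 0$ for arbitrary $H \in \fh$ and $\alpha \in \Delta$, is exactly the conclusion of \refP{p2}. Item~2, $\gamma(E_m^\alpha, E_n^\beta) \equiv 0$ for $\beta \neq -\alpha$, is exactly the conclusion of \refP{p3}.

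For item~3, namely $\gamma(E_m^\alpha, E_n^{-\alpha}) \equiv s\,\gamma(H_m^{\alpha_1}, H_n^{\alpha_1})$ with $s \in \C$, I would simply read off the second congruence in the chain displayed in \refP{p33}, setting $s := t_{\alpha,\alpha_1}$. Here the case $\alpha = \alpha_1$ is already covered by \refP{ct3}, and the case of a non-simple positive root $\alpha$ is handled by the reduction in \refP{p33} (via \refP{p11} to pass to a simple subroot, then \refP{p22} to pass to $\alpha_1$); the case of a negative root $\alpha$ follows by antisymmetry of $\gamma$ after swapping the two arguments, since the product $(\alpha,\alpha_1)/(\alpha,\alpha)$ appearing in the constants is insensitive to the sign of $\alpha$.

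For item~4, $\gamma(H_m^\alpha, H_n^\beta) \equiv t\,\gamma(H_m^{\alpha_1}, H_n^{\alpha_1})$ with $t \in \C$, I would invoke \refP{p44} directly with $t := s_{\alpha,\beta}$. There is no genuine obstacle in the proof: the only point that deserves a brief check is that all results quoted use the same normalization of $\gamma$ and the same fixed choice of the simple root $\alpha_1$, so that the constants $s$ and $t$ are universal (depending only on the root data of $\g$ and on $\alpha_1$) and not on $m,n$ or on $\gamma$. The proof therefore reduces to a one-sentence citation of \refP{p2}, \refP{p3}, \refP{p33}, and \refP{p44} respectively, which is the form I would write.
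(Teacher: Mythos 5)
Your proposal is correct and takes exactly the paper's approach: \refP{summary} is presented there purely as a summary of the preceding propositions ("Let us summarize the results obtained in Propositions\dots"), with no new argument given. You are in fact slightly more careful than the paper, which in its lead-in cites \refP{p1} although the first item is really the conclusion of \refP{p2}, and which leaves implicit the extension of \refP{p33} from positive roots to all of $\Delta$ that your antisymmetry remark supplies.
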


\bigskip
Next we consider for a simple  root $\a$ the relation
\begin{equation}
\ga(H_m^\a,[E_n^\a,E_k^{-\a}])+
\ga(E_n^\a,[E_k^{-\a},H_m^\a])+
\ga(E_k^{-\a},[H_m^\a,E_n^\a])=0.
\end{equation}
Using \refE{chevlax} we obtain
\begin{equation}
\ga(H_m^\a,H_{n+k}^\a)+
\ga(E_n^\a,2E_{k+m}^{-\a})+
\ga(E_k^{-\a},2E_{m+n}^\a)\equiv0.
\end{equation}
As the root is simple, we can use \refP{ct3} and obtain
the important relation
\begin{equation}\label{E:hrec}
\boxed{\ga(H_m^\a,H_{n+k}^\a)+
\ga(H_n^\a,H_{k+m}^\a)+
\ga(H_k^\a,H_{m+n}^\a)\equiv0.}
\end{equation}

\begin{proposition}\label{P:p5}
Let $\a$ be a simple root then we have
\begin{equation}\label{E:hn0}
\ga(H_n^\a,H_0^\a)\equiv 0,
\end{equation}
and
\begin{equation}\label{E:hnrec}
\ga(H_{n+1}^\a,H_{l-(n+1)}^\a)\equiv
\ga(H_{n-1}^\a,H_{l-(n-1)}^\a)+
2\ga(H_{1}^\a,H_{l-1}^\a).
\end{equation}
\end{proposition}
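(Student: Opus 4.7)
The plan is to derive both identities as direct consequences of the boxed three-term relation \refE{hrec} together with antisymmetry of $\ga$ and the earlier normalization results. The whole argument is a bookkeeping exercise in which I specialize the triple $(m,n,k)$ in \refE{hrec} to produce the right linear combinations; no new structural input is needed.

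For the first identity \refE{hn0}, I would simply set $k=0$ in \refE{hrec}. The two remaining terms $\ga(H_m^\a,H_{n}^\a)+\ga(H_n^\a,H_{m}^\a)$ cancel by antisymmetry, leaving
\begin{equation*}
\ga(H_0^\a,H_{m+n}^\a)\ \equiv\ 0.
\end{equation*}
Since $m+n$ can be any integer, this is exactly \refE{hn0}.

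For the recursion \refE{hnrec}, I would plug three different triples into \refE{hrec}, each having sum of degrees equal to $l$:
\begin{itemize}
\item[(i)] $(m,n,k)=(n+1,-1,l-n)$, giving $\ga(H_{n+1}^\a,H_{l-n-1}^\a)\equiv \ga(H_{n}^\a,H_{l-n}^\a)-\ga(H_{-1}^\a,H_{l+1}^\a)$, after using antisymmetry on the last term;
\item[(ii)] $(m,n,k)=(n,-1,l-n+1)$, giving $\ga(H_{n}^\a,H_{l-n}^\a)\equiv \ga(H_{n-1}^\a,H_{l-n+1}^\a)-\ga(H_{-1}^\a,H_{l+1}^\a)$ by the same mechanism;
\item[(iii)] $(m,n,k)=(1,-1,l)$, in which the third term $\ga(H_l^\a,H_0^\a)$ vanishes by \refE{hn0}, so the relation collapses to $\ga(H_{-1}^\a,H_{l+1}^\a)\equiv -\ga(H_1^\a,H_{l-1}^\a)$.
\end{itemize}
Substituting (ii) into (i) to eliminate $\ga(H_n^\a,H_{l-n}^\a)$ and then replacing $\ga(H_{-1}^\a,H_{l+1}^\a)$ by $-\ga(H_1^\a,H_{l-1}^\a)$ via (iii) yields exactly \refE{hnrec} with the factor $+2$ in front of $\ga(H_1^\a,H_{l-1}^\a)$.

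I do not expect any real obstacle: everything reduces to careful choices of $(m,n,k)$. The only sequencing point to watch is that \refE{hn0} must be proved first, because it is what allows the degenerate triple (iii) to reduce to a two-term identity; once that is in hand, the rest is linear algebra on the three specializations.
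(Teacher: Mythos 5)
Your proof is correct and follows essentially the same route as the paper: \refE{hn0} is obtained from a degenerate specialization of the boxed relation \refE{hrec} plus antisymmetry, and \refE{hnrec} from combining three level-$l$ specializations of \refE{hrec}, with \refE{hn0} used to kill the $\ga(H_l^\a,H_0^\a)$ term. The paper's choice of triples differs only trivially from yours (it takes $(m,n,k)=(\mp 1,n,l-n\pm 1)$ and subtracts the two relations, then uses $(m,-m,l)$, rather than fixing the middle entry at $-1$ and shifting the first), so the mechanism and the resulting algebra are identical.
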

\begin{proof}
Take the values $m=k=0$ in \refE{hrec}, then  the claim
\refE{hn0} follows from
the antisymmetry.
Setting $m=-1$ and $k=l-n+1$ in \refE{hrec}
we obtain
\begin{equation}\label{E:eqa}
\ga(\ha_{-1},\ha_{l+1})+
\ga(\ha_{n},\ha_{l-n})+
\ga(\ha_{l-(n-1)},\ha_{n-1})\equiv 0.
\end{equation}
With $m=1$ and $k=l-n-1$ we get
\begin{equation}\label{E:eqb}
\ga(\ha_{1},\ha_{l-1})+
\ga(\ha_{n},\ha_{l-n})+
\ga(\ha_{l-(n+1)},\ha_{n+1})\equiv 0.
\end{equation}
Subtracting \refE{eqa} from \refE{eqb} yields
\begin{equation}\label{E:eqc}
\ga(\ha_{l-(n+1)},\ha_{n+1})\equiv
\ga(\ha_{l-(n-1)},\ha_{n-1})-
\ga(\ha_{1},\ha_{l-1})
+
\ga(\ha_{-1},\ha_{l+1}).
\end{equation}
By setting $n=-m$ and $k=l$ in \refE{hrec} we get
\begin{equation}\label{E:eqc1}
\ga(\ha_{-n},\ha_{n+l})+
\ga(\ha_{n},\ha_{l-n})+
\ga(\ha_{l},\ha_{0})\equiv 0.
\end{equation}
The last term does not contribute by \refE{hn0}. Hence
\begin{equation}\label{E:eqd}
\ga(\ha_{n},\ha_{l-n})\equiv
-\ga(\ha_{-n},\ha_{l+n}).
\end{equation}
If we plug \refE{eqd} into \refE{eqc} and use the
antisymmetry we obtain \refE{hnrec}.
\end{proof}
\begin{proposition}\label{P:p51}
Let $\a$ be a simple root. At level $l=0$
the cocycle values are given by
the relations
\begin{equation}\label{E:cat0}
\ga(H_n^{\a},H_{-n}^\a)\equiv n\cdot
\ga(H_1^{\a},H_{-1}^\a),\quad \ga(H_0^{\a},H_0^{\a})=0.
\end{equation}
\end{proposition}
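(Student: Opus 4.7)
The plan is to read off the level zero values directly from the two recursions packaged in \refP{p5}. First, the vanishing statement $\ga(H_0^{\a},H_0^{\a})=0$ is immediate from the antisymmetry of $\ga$ (this even holds as a genuine equality, not just up to higher level). So the entire content is the identity $\ga(H_n^\a,H_{-n}^\a)\equiv n\,\ga(H_1^\a,H_{-1}^\a)$.

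Next I would specialize the recursion \refE{hnrec} to level $l=0$. Setting $l=0$ turns it into
\begin{equation*}
\ga(H_{n+1}^\a,H_{-(n+1)}^\a)\equiv \ga(H_{n-1}^\a,H_{-(n-1)}^\a)+2\,\ga(H_{1}^\a,H_{-1}^\a),
\end{equation*}
a two-step recursion in $n$. Writing $a_n:=\ga(H_n^\a,H_{-n}^\a)$ and $c:=\ga(H_1^\a,H_{-1}^\a)$, this reads $a_{n+1}\equiv a_{n-1}+2c$, so the even- and odd-indexed values are determined separately from two base cases.

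Now I would carry out an induction on $n\ge 0$. The base cases are $a_0=0$ (by antisymmetry of $\ga$, which gave \refE{hn0}) and $a_1=c$ (trivially, by definition). The inductive step is immediate: if $a_{n-1}\equiv (n-1)c$, then $a_{n+1}\equiv a_{n-1}+2c\equiv (n-1)c+2c=(n+1)c$. Applied once along the even indices starting from $a_0=0$, and once along the odd indices starting from $a_1=c$, this yields $a_n\equiv nc$ for every $n\ge 0$. Finally, the case $n<0$ follows from antisymmetry: $a_n=-a_{-n}\equiv -(-n)c=nc$.

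There is no real obstacle here --- the recursion \refE{hnrec} has already done the nontrivial work (the Jacobi identity for a triple of $H^\a$'s together with \refP{ct3} to eliminate the $E^{\pm\a}$ contributions), and what remains is a two-line induction. The only thing to be careful about is that the symbol $\equiv$ means equality modulo values of $\ga$ at higher level, so the conclusion $a_n\equiv nc$ should be read as a universal identity up to such higher level contributions; this is exactly in the form needed to continue the global inductive scheme that fixes every normalized cocycle by the single number $\ga(H_1^\a,H_{-1}^\a)$.
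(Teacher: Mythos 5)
Your proposal is correct and follows exactly the paper's route: specialize the recursion \refE{hnrec} to level $l=0$ and induct, with $\ga(H_0^\a,H_0^\a)=0$ coming from antisymmetry. The paper compresses the induction into ``which yields the expression as claimed''; you simply spell out the base cases, the two-step induction along even and odd indices, and the reduction of negative $n$ to positive $n$ via antisymmetry, all of which is consistent with the intended argument.
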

\begin{proof}
If we take the value $l=0$ in \refE{hnrec}  we obtain the
relation
\begin{equation}
\ga(H_{n+1}^\a,H_{-(n+1)}^\a)\equiv
\ga(H_{n-1}^\a,H_{-(n-1)}^\a)+
2\ga(H_{1}^\a,H_{-1}^\a),
\end{equation}
which yields the expression as claimed.
\end{proof}
\begin{proposition}\label{P:p6}
For a simple root $\a$ and for a level $l\ne 0$ we have
\begin{equation}
\ga(H_{n}^\a,H_{l-n}^\a)\equiv 0.
\end{equation}
\end{proposition}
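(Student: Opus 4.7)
The plan is to fix a simple root $\a$, set $f(n) := \ga(H_n^\a, H_{l-n}^\a)$, and show $f(n)\equiv 0$ for every $n\in\Z$ whenever $l\ne 0$. The key inputs are already available: the three-term recursion \refE{hnrec}, the vanishing of $f(0)$, and the antisymmetry of $\ga$. The point is that the recursion forces $f$ to grow linearly in $n$, while antisymmetry forces $f$ to be odd about $l/2$; the only way to reconcile the two is that the linear slope vanishes, which under $l\ne 0$ forces $f(1)\equiv 0$ and then everything.

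I would first record that $f(0)\equiv 0$. This follows from \refE{hn0} combined with antisymmetry, since $f(0)=\ga(H_0^\a,H_l^\a)\equiv -\ga(H_l^\a,H_0^\a)\equiv 0$. Rewriting \refE{hnrec} as $f(n+1)\equiv f(n-1)+2f(1)$, a short induction on $|n|$ in both directions yields
\begin{equation}
f(n)\equiv n\,f(1), \qquad n\in\Z,
\end{equation}
where I use the base cases $f(0)\equiv 0$ and $f(1)=f(1)$, and the fact that the congruence $\equiv$ is preserved under linear combinations at a fixed level $l$.

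Next I would invoke the antisymmetry of the cocycle: $f(n)=-f(l-n)$. Substituting the previous identity on both sides gives
\begin{equation}
n\,f(1)\ \equiv\ -(l-n)\,f(1),
\end{equation}
which simplifies to $l\,f(1)\equiv 0$. Since $l\ne 0$ and we work over $\C$, this forces $f(1)\equiv 0$, and then $f(n)\equiv n\,f(1)\equiv 0$ for every $n\in\Z$, which is the claim.

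The only step requiring care is bookkeeping of the congruence $\equiv$: each application of the recursion and each use of antisymmetry introduces corrections lying in higher-level cocycle values, so one must verify that these corrections all stay at levels $>l$ so the argument is consistent at the fixed level $l$. This follows directly from the almost-graded structure \refE{chevlax} under which \refE{hnrec} was derived, so no additional obstacle arises, and the proposition reduces to the two-line identification $l\cdot f(1)\equiv 0$.
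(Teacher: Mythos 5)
Your proof is correct, and at the decisive step it takes a genuinely different and simpler route than the paper. Both arguments begin identically: writing $f(n):=\ga(\ha_n,\ha_{l-n})$, the recursion \refE{hnrec} together with $f(0)\equiv 0$ (from \refE{hn0} and antisymmetry) reduces the whole level $l$ to the single value $f(1)=\ga(\ha_1,\ha_{l-1})$. To kill that value the paper goes back to the three-term relation \refE{hrec}: it specializes to $k=l-r-1$, $n=1$, $m=r$, sums the resulting equations \refE{eqe} over $r=1,\ldots,m$ with $m\approx l/2$, distinguishes the parity of $l$ (the last summand has a different shape in the even and odd cases), and then runs a mirrored argument for $l<0$, arriving at $(m+\epsilon)\,\ga(\ha_1,\ha_{l-1})\equiv 0$. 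You instead solve the recursion outright, $f(n)\equiv n\,f(1)$ for all $n\in\Z$ --- the same computation the paper performs only at level zero in \refP{p51} --- and then invoke the exact antisymmetry $f(n)=-f(l-n)$ (a single instance suffices: $f(l)=\ga(\ha_l,\ha_0)\equiv 0$ by \refE{hn0}) to get $l\,f(1)\equiv 0$, hence $f(1)\equiv 0$ because $l\ne 0$. What your route buys: no parity bookkeeping, no separate treatment of $l<0$, and a transparent explanation of why level zero is exceptional (there antisymmetry gives only the vacuous $0\equiv 0$, consistent with \refP{p51}, where the values indeed need not vanish); the paper's route buys little beyond staying close to the raw relation \refE{hrec}. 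Your handling of the congruence is also sound: reaching $f(l)$ takes the fixed finite number $|l|$ of recursion steps, each contributing higher-level corrections with coefficients depending only on the algebra, and dividing the relation $l\,f(1)\equiv 0$ by the nonzero integer $l$ preserves that form.
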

\begin{proof}
First let $l>0$. Using the recursion \refE{hnrec} and the result
\refE{hn0} we see that the claim will be true if it is verified for
$\ga(\ha_1,\ha_{l-1})$.
For $l=1$ using \refE{hn0} we get  $\ga(\ha_1,\ha_{0})\equiv 0$.
For $l=2$ by the antisymmetry   $\ga(\ha_1,\ha_{1})=0$.
Hence let $l>2$. We set $k=l-r-1$, $n=1$ and $m=r$ in
\refE{hrec}:
\begin{equation}\label{E:eqe}
\ga(\ha_{1},\ha_{l-1})+\ga(\ha_{r},\ha_{l-r})-
\ga(\ha_{r+1},\ha_{l-(r+1)})\equiv 0.
\end{equation}
Set $m=(l-2)/2$ for $l$ even and $m=(l-1)/2$ for $l$ odd.
If $r$ runs through $1,2,\ldots,m$ we obtain $m$ equations.
The first equation is always 
\begin{equation}
2\ga(\ha_{1},\ha_{l-1})-
\ga(\ha_{2},\ha_{l-2})\equiv 0.
\end{equation}
The structure of the last equation  depends on the parity of $l$.
For $l$ even and $r=m$ the last term of 
\refE{eqe} is 
$\ga(\ha_{l/2},\ha_{l/2})$ which vanishes. For $l$ odd the last term
of \refE{eqe} coincides with the second term. Hence
\begin{equation}
\ga(\ha_{1},\ha_{l-1})+2
\ga(\ha_{(l-1)/2},\ha_{(l+1)/2})\equiv 0.
\end{equation}
In this case we divide it by 2. By summing up all these 
equations we obtain
\begin{equation}
(m+\epsilon)\ga(\ha_{1},\ha_{l-1})\equiv 0,
\end{equation}
with $\epsilon=1$ for $l$ even and 
$\epsilon=1/2$ for $l$ odd. As in any case $(m+\epsilon)>0$ this shows the
claim.
\newline
For $l<0$ we see that the claim is shown if it is true for
$\ga(\ha_{-1},\ha_{l+1})$.
The argument works in the same way as above. For $l=-1,-2$ it
follows immediately. For $l<-2$ we plug  in 
$k=l-r+1$, $n=-1$ and $m=r$ in \refE{hrec}, and obtain
\begin{equation}\label{E:eqf}
\ga(\ha_{-1},\ha_{l+1})+
\ga(\ha_{r},\ha_{l-r})
-
\ga(\ha_{r-1},\ha_{l-(r-1)})
\equiv 0.
\end{equation}
We set $m:=(-l-2)/2$ for $l$ even and $m:=(-l-1)/2$ for $l$ odd and
consider the equation \refE{eqf} for 
$r=-1,-2,\ldots,-m$. They have a  structure  similar to 
the case $l>0$ and 
we can sum them up to obtain the statement about 
$\ga(\ha_{-1},\ha_{l+1})$.
\end{proof}

\medskip
\begin{proof}[Proof of \refT{locuni}]
After adding a suitable coboundary we might replace
the given  $\ga$  by a normalized $\ga$ (see \refD{norc}).
By the series of propositions above we showed that
the expressions at level $l$ of the
cocycle can be reduced to expressions of  level $> l$ and
values $\ga(H_n^\a,H_{l-n}^\a)$.
As long as the level is $>0$,  by \refP{p6} also these
values can be expressed by higher level.
Hence by trivial induction, starting with the boundedness from above, 
we obtain that zero is an upper bound for the level of the
cocycle.
Also it follows that the values at level $l<0$ are fixed
by induction by the values at level zero.
Hence it remains to consider
level zero.
By  Propositions \ref{P:summary}, \ref{P:p51}, and
\ref{P:p6}
everything depends only on
$\ga(H_1^{\a},H_{-1}^\alpha)$ for one (fixed) simple root.
Hence the claim follows.
\end{proof}

\begin{proposition}
If a normalized cocycle $\ga$ is a coboundary then it 
vanishes identically.
\end{proposition}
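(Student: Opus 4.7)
The plan is to combine the rigidity established in the proof of \refT{locuni} with an explicit ``commuting pair'' construction borrowed from \refP{nonbound}. By that theorem, every normalized cocycle $\ga$ is bounded from above by zero, and at level zero all its values are determined by the single number $\ga(H_1^\a,H_{-1}^\a)$ for any fixed simple root $\a$. Thus the whole proposition reduces to showing that this one number vanishes whenever $\ga=\delta\phi$ for some linear functional $\phi\colon\gb\to\C$.

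Next I would follow the device of \refP{nonbound}: fix a simple root $\a$, let $H_0^\a\in\gb_0$ be the degree-zero lift of $H^\a$ from \refP{locex}, and introduce
\begin{equation*}
H_{(n)}:=H_0^\a\cdot A_n\in\gb,\qquad n\in\Z,
\end{equation*}
where $A_n\in\A$ is the \KN\ basis element of degree $n$. Because $A_n$ is scalar-valued, pointwise multiplication commutes with the matrix commutator, so
\begin{equation*}
[H_{(-1)},H_{(1)}]=[H_0^\a,H_0^\a]\cdot A_{-1}A_1=0
\end{equation*}
holds as an honest identity in $\gb$, not merely modulo higher degree. If $\ga=\delta\phi$, this immediately gives $\ga(H_{(-1)},H_{(1)})=\phi(0)=0$.

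To conclude, I would identify $\ga(H_{(-1)},H_{(1)})$ with $-\ga(H_1^\a,H_{-1}^\a)$. Expansion at $P_+$ gives $H_{(n)}=H^\a z_+^n+O(z_+^{n+1})$, hence by the uniqueness in \refP{locex} the difference $H_{(n)}-H_n^\a$ lies in $F_{n+1}$. Writing $H_{(1)}=H_1^\a+u$ with $u\in F_2$ and $H_{(-1)}=H_{-1}^\a+v$ with $v\in F_0$, each cross term $\ga(H_{-1}^\a,u)$, $\ga(v,H_1^\a)$, $\ga(v,u)$ decomposes into pairings of homogeneous elements of strictly positive level and so vanishes by normalization. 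Antisymmetry then yields $\ga(H_1^\a,H_{-1}^\a)=0$, and \refT{locuni} propagates the vanishing to every level.

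The hard part will be the bookkeeping in this last step: one must verify that $H_{(n)}$ really belongs to $\gb$ (immediate, since $A_n$ is scalar and holomorphic at $W$, so it respects the weak-singularity conditions), and then carefully track the error terms $u$ and $v$ through the cocycle. The conceptual heart of the proof is the \emph{exact} (as opposed to approximate) vanishing of $[H_{(-1)},H_{(1)}]$, which crucially exploits the true pointwise matrix product rather than the bracket modulo higher degree visible in \refE{chevlax}.
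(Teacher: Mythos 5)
Your proof is correct and is essentially the paper's own argument: the paper likewise sets $H_{(\pm 1)}^\a:=H_0^\a A_{\pm 1}$, exploits the \emph{exact} vanishing of $[H_{(1)}^\a,H_{(-1)}^\a]=[H_0^\a,H_0^\a]A_1A_{-1}=0$, uses the vanishing of the (normalized, bounded-from-above) cocycle at positive levels to identify $\ga(H_1^\a,H_{-1}^\a)$ with $\ga(H_{(1)}^\a,H_{(-1)}^\a)=\phi(0)=0$, and then lets the rigidity from \refT{locuni} propagate the vanishing to all levels. Your explicit decomposition $H_{(1)}=H_1^\a+u$, $H_{(-1)}=H_{-1}^\a+v$ with $u\in F_2$, $v\in F_0$ simply spells out the cross-term bookkeeping that the paper's $\equiv$ notation leaves implicit.
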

\begin{proof}
As explained above, a normalized cocycle is fixed by the value  
$\ga(H_1^{\a},H_{-1}^\alpha)$. But 
$H_{(1)}^\a:=H_0^\a A_1\equiv H^\a_1$ and 
$H_{(-1)}^\a:=H_0^\a A_{-1}\equiv H^\a_{-1}$. Hence
\begin{equation}
[H_{(1)}^\a,H_{(-1)}^\a]=[H_0^\a,H_0^\a]A_1A_{-1}=0.
\end{equation}
As the cocycle vanishes for positive levels, and as 
$\ga=\delta\phi$ is a coboundary we get 
\begin{equation}
\ga(H_1^{\a},H_{-1}^\alpha)=
\ga(H_{(1)}^{\a},H_{(-1)}^\alpha)
=\phi([H_{(1)}^\a,H_{(-1)}^\a])=\phi(0)=0.
\end{equation}
Hence, all cocycle values are zero, as claimed.
\end{proof}

\begin{remark}
In the classical case $\gb=\g\otimes\C[z,z^{-1}]$
the algebra is graded. Hence there are no
higher order terms in \refE{ehn}, and we can even start
with an arbitrary cocycle, not necessarily
bounded, and take as coboundary the one defined
via \refE{bel}. As all our $\equiv$ symbols
are replaced by $=$ symbols there are nowhere higher
contributions, and we obtain the same uniqueness result
as above.
In this very special case the presented chain of arguments 
simplifies and is then 
similar to that  of Garland \cite{Gar}.
\end{remark}
 
\medskip
\begin{remark}
A closer look at the arguments used in \refS{induction} and
\refS{direct} shows that we only use (1) the property of
almost-grading of $\gb$ as expressed in \refT{almgrad},  (2)
that there exists a connection $\omega$, which is holomorphic at
$P_+$ with possibly  poles at $P_-$ and  at the
points of weak singularities, such that 
$\gb$ becomes a Lie module over $\L$ with respect to the connection
$\nabla^{(\omega)}$,  and (3) that the  cocycle \refE{g1} is local
with respect to the almost-grading.
Already from these conditions  \refP{almgrad} follows and all
arguments go through for  any suitable definition of $\gb$
associated to a simple Lie algebra $\g$.
\end{remark}
\appendix
\section{Calculations for $\sob(n)$ and $\spnb(2n)$}\label{S:action}
In this appendix we show \refP{action} for
$\gb=\sob(n)$ and $\gb=\spnb(2n)$.
In fact, it only remains to show that for $L\in\gb$ 
we have $\nabla_e^{(\w)}L\in \gb$.
More precisely, we have to verify whether the conditions at the
points  $\gamma_s$ of 
 weak singularities with $\a_s\ne 0$ are fulfilled.
To simplify notation we omit the index $s$ and use $z$ for $z_s$.

\subsection{The case $\gb=\sob(n)$}
$ $

Let $L\in \sob(n)$ given at the weak singularities by the
expansion \refE{glexp} with the conditions \refE{sodef}. 
{}Furthermore, let $\w$ be a connection fulfilling \refE{sodefc}.
The first term in the connection applied to $L$ calculates as
\begin{equation*}
\frac {dL}{dz}=\frac {-L_{-1}}{z^2}+L_1+\sum_{k\ge 1}(k+1)L_{k+1}z^k.
\end{equation*}
For the second term $[\w,L]$ we consider its degree expansion.

\medskip
\noindent
{\it Term of order -2:}
It comes with the matrix coefficient
$$
[\w_{-1},L_{-1}]=
[\a\tb^t-\tb\a^t,\a\b^t-\b\a^{t}].
$$
Using 
$$
\tb^t\a=\a^{t}\tb=1,\quad
\a^t\a=0,\quad \b^t\a=\a^t\b=0,\quad
\epsilon:=\tb^t\b,
$$
we calculate
$$
[\a\tb^t,\a\b^t]=\a\b^t,
\quad
[\a\tb^t,\b\a^t]=\epsilon\a\a^t,
\quad
[\tb\a^t,\a\b^t]=-\epsilon\a\a^t,
\quad
[\tb\a^t,\b\a^t]=-\b\a^t.
$$
Hence 
$$
[\w_{-1},L_{-1}]=\a\b^t-\b\a^t=L_{-1},
$$
and there is no term of order -2.

\medskip
\noindent
{\it Term of order -1:}
Here we have to show that we can write the matrix coefficient
as $\a\hat\beta^t-\hat\b\a^t$ with $\hat \b$ where $\hat\b^t\a=0$.
\begin{align*}
[\w_{-1},L_0]&=[\a\tb^t,L_0]-[\tb\a^t,L_0]=
        \a\tb^tL_0-L_0\a\tb^t-\tb\a^tL_0+L_0\tb\a^t
\\
   &=\a(\tb^tL_0-\ka\tb^t)+(\ka\tb+L_0\tb)\a^t
\\
[L_{-1},\w_0]&=
\a(\b^t\w_0-\tilde\ka\b^t)+(\tilde\ka\b+\w_0\b)\a^t.
\end{align*}
If we set
$$
\hat\b=-(\ka\tb+\tilde\ka\b+L_0\tb+\w_0\b)
$$
we obtain
$$
[\w_{-1},L_0]+[L_{-1},\w_0]=\a\hat\beta^t-\hat\b\a^t.
$$
Furthermore,
$$
\hat\b^t\a=
\tb^tL_0\a+\b^t\w_0\a-\ka\tb^t\a-\tilde\ka\b^t\a=0,
$$
as $\a$ is an eigenvector of $L_0$ with eigenvalue $\ka$, and 
of  $\w_0$ with eigenvalue $\tilde\ka$.

\medskip
\noindent
{\it Zero order:}
Here we have to show that there exists $\hat\ka\in\C$ such that
$$
([\w_{-1},L_{1}]+[\w_{0},L_{0}]+[\w_{1},L_{-1}]+L_1)\a=\hat\ka\a.
$$
The second term  vanishes. Further on, 
\begin{align*}
[\w_{-1},L_{1}]\a&=
      \a\tb^tL_1\a-\tb\a^tL_1\a-L_1\a\tb^t\a+L_1\tb\a^t\a
\\
&=-L_1\a+\mu\a,
\end{align*}
with $\mu=\tb^tL_1\a$. Note that $\a^tL_1\a=0$ as
$L_1$ is skew-symmetric. The last term also vanishes due to 
$\a^t\a$. Also
$$
[\w_{-1},L_{1}]\a=\mu'\a,
$$
with $\mu'=-\b^t\w_1\a$. Hence we get the 
proclaimed eigenvalue property.
\qed


\subsection{The case $\gb=\spnb(2n)$}
$ $

Let $\w$ be a connection form as described by 
\refE{connl} and \refE{spdefc}.
For the convenience of the reader 
we write down the above listed (see \refS{alg})
conditions again
\begin{gather}
\w_{-1}= (\a{\tilde\b}^t+{\tilde\b}\a^t)\s ,\label{E:i2c}
\\
\label{E:j2c}  {\tilde\b}^t\s\a=1,
\\
\label{E:k2c}  \w_0\a={\tilde\k}\a ,
\\
\label{E:l2c}
\a^t\s\w_1\a=0.
\end{gather}
The corresponding conditions for $L\in\spnb(2n)$ are
\begin{gather}
\label{E:i''}
 L_{-2}=\nu\a\a^t,\ \ L_{-1}=(\a\b^t +\b\a^t)\s,
\\
\label{E:j''}
 \b^t\s\a=0,
\\
\label{E:k''}
L_0\a=\k\a
\\
\label{E:l''}
  \a^t\s L_1\a=0.
\end{gather}
We have
\[ \frac{dL}{dz}=-2\nu\frac{\a\a^t\s}{z^3}-\frac{(\a\b^t+\b\a^t)\s}{z^2}
   +L_1+2L_2z\ldots\ ,
\]
\[
  [\w,L]=\frac{[\w_{-1},L_{-2}]}{z^3}+\frac{[\w_{-1},L_{-1}]
         +[\w_0,L_{-2}]}{z^2}+
   \frac{[\w_{-1},L_0]+[\w_0,L_{-1}]+[\w_1,L_{-2}]}{z}+\ldots\ .
\]
Let 
$$L'dz=\nabla^{(\w)}L=\left(\frac {dL}{dz}+[\w,L]\right)dz.
$$
We calculate the matrix coefficients of $L'$.

\medskip
\noindent
{\it Term of order -3:} 
\begin{align*}
  L'_{-3} &=
  -2\nu\a\a^t\s+[(\a{\tilde\b}^t+{\tilde\b}\a^t)\s,\nu\a\a^t\s]\\
  &=-2\nu\a\a^t\s+\nu\a({\tilde\b}^t\s\a)\a^t\s-\nu\a(\a^t\s{\tilde\b})\a^t\s.
\end{align*}
By \refE{j2c} and skew-symmetry of $\s$ we obtain $L_{-3}=0$.

\medskip
\noindent
{\it Term of order -2:} 
\begin{align*}
  L'_{-2}&=-(\a\b^t+\b\a^t)\s
          +[(\a{\tilde\b}^t+{\tilde\b}\a^t)\s,(\a\b^t+\b\a^t)\s]
          +\nu[\w_0,\a\a^t\s]\\
     &=-(\a\b^t+\b\a^t)\s + \a({\tilde\b}^t\s\a)\b^t\s+
        \a({\tilde\b}^t\s\b)\a^t\s
       -\a(\b^t\s{\tilde\b})\a^t\s-\b(\a^t\s{\tilde\b})\a^t\s \\
      &\qquad\qquad\qquad\qquad+2\nu{\tilde\k}\a\a^t\\
      &=2({\tilde\b}^t\s\b+\nu{\tilde\k})\a\a^t\s .
\end{align*}
Hence, $L'_{-2}$ is of the required form \refE{i''}.

\medskip
\noindent
{\it Term of order  -1:} 
\begin{align*}
  L'_{-1}& = [(\a{\tilde\b}^t+{\tilde\b}\a^t)\s,L_0]
            +[\w_0,(\a\b^t+\b\a^t)\s]+[\w_1,\nu\a\a^t\s]\\
        &=[\a{\tilde\b}^t\s,L_0]+[{\tilde\b}\a^t\s,L_0]
            +[\w_0,\a\b^t\s]+[\w_0,\b\a^t\s]
            +[\w_1,\nu\a\a^t\s]\\
        &=\a({\tilde\b}^t\s L_0)-\k\a{\tilde\b}^t\s-\k{\tilde\b}\a^t\s
          -L_0{\tilde\b}\a^t\s\\
        &\qquad \qquad
             +{\tilde\k}\a\b^t\s-\a(\b^t\s\w_0)+
             \w_0\b\a^t\s+{\tilde\k}\b\a^t\s\\
        &\qquad\qquad \qquad+\nu(\w_1\a)\a^t\s-\nu\a\a^t\s\w_1\\
        &=\a({\tilde\b}^t\s L_0)-\k\a{\tilde\b}^t\s+{\tilde\k}\a\b^t\s
        -\a(\b^t\s\w_0)-\nu\a\a^t\s\w_1\\
        &\qquad\qquad -\k{\tilde\b}\a^t\s-L_0{\tilde\b}\a^t\s
        +\w_0\b\a^t\s+{\tilde\k}\b\a^t\s
        +\nu(\w_1\a)\a^t\s\\
      &=\a({\tilde\b}^t\s L_0-\k{\tilde\b}^t\s+{\tilde\k}\b^t\s
        -\b^t\s\w_0-\nu\a^t\s\w_1)\\
       &\qquad\qquad +(-\k{\tilde\b}-L_0{\tilde\b}+\w_0\b+{\tilde\k}\b
        +\nu\w_1\a)\a^t\s.
\end{align*}
Denote the second bracket in the last expression by $\b'$. Then,
by the symplecticity relation $\w_1^t=-\s\w_1\s^{-1}$, and the 
corresponding 
relations for $\w_0$ and $L_0$, we find that the first bracket is
equal to $\b'^t\s$ there, hence
\[
  L'_{-1}=(\a\b'^t+\b'\a^t)\s
\]
as required by the relation \refE{i''}.

It remains to show that $\b'^t\s\a=0$. The expression for
$\b'^t\s$ is exactly given by the just mentioned first bracket. We
have
\begin{align*}
  \b'^t\s\a &= ({\tilde\b}^t\s L_0-\k{\tilde\b}^t\s+{\tilde\k}\b^t\s
        -\b^t\s\w_0-\nu\a^t\s\w_1)\a \\
        &= ({\tilde\b}^t\s L_0\a-\k{\tilde\b}^t\s\a)+({\tilde\k}\b^t\s\a
        -\b^t\s\w_0\a)-\nu\a^t\s\w_1\a .
\end{align*}
This is zero  by \refE{k''}, \refE{k2c} and \refE{l2c}.

\medskip
\noindent
{\it Order zero term:}
We have to show the relation \refE{k''} for $L'$. We have
\[
  L_0'=[\w_{-1},L_1]+[\w_{0},L_0]+[\w_{1},L_{-1}]+[\w_2,L_{-2}]+L_1.
\]
For the first bracket we have
\begin{align*}
  [\w_{-1},L_1]\a &= (\a{\tilde\b}^t+{\tilde\b}\a^t)\s L_1\a
             -L_1(\a{\tilde\b}^t+{\tilde\b}\a^t)\s\a \\
             &= \a({\tilde\b}^t\s L_1\a)+{\tilde\b}(\a^t\s L_1\a)
             -L_1\a({\tilde\b}^t\s\a)+{\tilde\b}(\a^t\s\a).
\end{align*}
The second and the fourth terms vanish by \refE{l''} and
skew-symmetry of $\s$, respectively. In the third term, we replace
${\tilde\b}^t\s\a$ with $1$. Thus, we obtain
\[
  [\w_{-1},L_1]\a = \a({\tilde\b}^t\s L_1\a)-L_1\a.
\]
Obviously,
\[
  [\w_{0},L_0]\a=0.
\]
For the third bracket, we have
\begin{align*}
  [\w_1,L_{-1}]\a &=[\w_1,(\a\b^t+\b\a^t)\s]\a \\
       &=\w_1\a(\b^t\s\a)+\w_1\b(\a^t\s\a) -
         \a(\b^t\s\w_1\a)-\b(\a^t\s\w_1\a)\\
       &= -\a(\b^t\s\w_1\a).
\end{align*}
At last, for the fourth term we have
\[
    [\w_2,L_{-2}]\a=\nu\w_2\a(\a^t\s\a)-\nu\a(\a^t\s\w_2\a)
\]
where the first term obviously vanishes, and we have
\[
     [\w_2,L_{-2}]\a=-\nu\a(\a^t\s\w_2\a).
\]
Finally, we obtain
\[
   L'_0\a=({\tilde\b}^t\s L_1\a-\b^t\s\w_1\a-\nu(\a^t\s\w_2\a))\a .
\]

\medskip
\noindent
{\it Order one term:}
We have to show \refE{l''} for $L'$: $\a^t\s L'_1\a=0$.
We have
\[
  L'_1= 2L_2+ [\w_{-1},L_2]+[\w_0,L_1]+[\w_1,L_0]+[\w_2,L_{-1}]
        +[\w_3,L_{-2}].
\]
For every bracket $[\cdot,\cdot]$ in this expression,
we calculate the corresponding product $\a^t\s[\cdot,\cdot]\a$
and obtain
\begin{align*}
  \a^t\s[\w_{-1},L_2]\a &=\a^t\s[(\a{\tilde\b}^t +
  {\tilde\b}\a^t)\s,L_2]\a\\
  &= \a^t\s(\a{\tilde\b}^t+{\tilde\b}\a^t)\s L_2\a
  -\a^t\s L_2(\a{\tilde\b}^t+{\tilde\b}\a^t)\s\a     \\
  &= (\a^t\s\a){\tilde\b}^t\s L_2\a+(\a^t\s{\tilde\b})\a^t\s L_2\a
  -\a^t\s L_2\a({\tilde\b}^t\s\a)-\a^t\s L_2{\tilde\b}(\a^t\s\a)
  \\
  &= -2\a^t\s L_2\a
\end{align*}
(by the relations $\a^t\s\a=0$ and \refE{j2c}). The result will
cancel with the one coming from the first term $2L_2$
in the above expression for $L_1'$.

Further on,
\begin{align*}
  \a^t\s[\w_0,L_1]\a &=(\a^t\s\w_0)L_1\a-\a^t\s L_1(\w_0\a) \\
   &= -2{\tilde\k}(\a^t\s L_1\a)\\
   &=0,
\\
  \a^t\s[\w_1,L_0]\a &= \a^t\s\w_1)L_0\a)-(\a^t\s L_0)\w_1\a \\
   &= 2\k(\a^t\s\w_1\a) \\
   &= 0,
\\
  \a^t\s[\w_2,L_{-1}]\a &= \a^t\s\w_2(\a\b^t+\b\a^t)\s\a
                           - \a^t\s(\a\b^t+\b\a^t)\s\w_2\a \\
   &= \a^t\s\w_2\a(\b^t\s\a)+\a^t\s\w_2\b(\a^t\s\a)
       - (\a^t\s\a)\b^t\s\w_2\a-(\a^t\s\b)\a^t\s\w_2\a  \\
   &= 0,
\\
  \a^t\s[\w_3,L_{-2}]\a &= \nu\a^t\s\w_3 \a(\a^t\s\a)
              -\nu(\a^t\s\a)\a^t\s\w_3\a \\
              &=0.
\end{align*}
Hence
\[ \a^t\s L'_1\a=0.
\]
\qed


\end{document}